\newtheorem{thm}{Theorem}[section]
\newtheorem{la}[thm]{Lemma}
\newtheorem{Defn}[thm]{Definition}
\newtheorem{Remark}[thm]{Remark}
\newtheorem{Note}[thm]{Note}
\newtheorem{prop}[thm]{Proposition}
\newtheorem{cor}[thm]{Corollary}
\newtheorem{Example}[thm]{Example}
\newtheorem{Examples}[thm]{Examples}
\newtheorem{Problems}[thm]{Problems}
\newtheorem{Problem}[thm]{Problem}
\newtheorem{Number}[thm]{\!\!}
\newenvironment{defn}{\begin{Defn}\rm}{\end{Defn}}
\newenvironment{rem}{\begin{Remark}\rm}{\end{Remark}}
\newenvironment{numba}{\begin{Number}\rm}{\end{Number}}
\newenvironment{proof}{{\noindent\bf Proof.}}%
                  {\nopagebreak\hspace*{\fill}$\Box$\medskip\medskip\par}   
\newcommand{\Punkt}{\nopagebreak\hspace*{\fill}$\Box$}
\newcommand{\wb}{\overline}
\newcommand{\ve}{\varepsilon}
\newcommand{\wt}{\widetilde}
\newcommand{\n}{\rm}
\newcommand{\mto}{\mapsto}
\newcommand{\isom}{\cong}
\newcommand{\N}{{\mathbb N}}
\newcommand{\R}{{\mathbb R}}
\newcommand{\bL}{{\mathbb L}}
\newcommand{\Z}{{\mathbb Z}}
\newcommand{\C}{{\mathbb C}}
\newcommand{\K}{{\mathbb K}}
\newcommand{\cU}{{\cal U}}
\newcommand{\cO}{{\cal O}}
\newcommand{\cV}{{\cal V}}
\newcommand{\dl}{{\displaystyle \lim_{\longrightarrow}}}
\newcommand{\Hom}{\mbox{\n Hom}}
\newcommand{\take}{\backslash}
\newcommand{\sub}{\subseteq}
\newcommand{\GL}{\mbox{\rm GL}}
\newcommand{\im}{\mbox{\n im}}
\newcommand{\pr}{\mbox{\rm pr}}
\newcommand{\id}{\mbox{\n id}}
\newcommand{\cB}{{\cal B}}
\newcommand{\cA}{{\cal A}}
\newcommand{\cD}{{\cal D}}
\newcommand{\cK}{{\cal K}}
\newcommand{\cP}{{\cal P}}
\newcommand{\smim}{\mbox{\n\footnotesize im}}
\newcommand{\sSup}{\mbox{\n\footnotesize supp}}
\newcommand{\Int}{\mbox{\n int}}
\newcommand{\Supp}{\mbox{\n supp}}
\newcommand{\sbull}{{\scriptscriptstyle \bullet}}
\begin{document}
\begin{center}
{{\large\bf
Differentiable Mappings Between Spaces of Sections
}}\vspace{2mm}\\
{\bf Helge Gl\"{o}ckner}
\end{center}
\begin{abstract}
\hspace*{-6 mm}We study differentiability properties of
mappings between
spaces of sections
in\linebreak
locally trivial bundles of locally convex spaces,
and describe various applications.\linebreak
In particular, we show that the space of $C^r$-sections
$C^r(X,E)$ is a topological $C^r(X)$-module,
for every
$C^r$-vector bundle $E\to X$ (where $r\in \N_0\cup\{\infty\}$),
and we show that,
provided $X$ is finite-dimensonal
and $\sigma$-compact,
the space $\cD^r(X,E)$ of compactly supported $C^r$-sections
is a topological module over the topological algebra
$\cD^r(X)$ of test functions of class~$C^r$ on~$X$.
We also show that the mapping group
$C^r(X,G)$ can be
given a smooth (resp., $\K$-analytic)
Lie group structure modelled
on $\cD^r(X,L(G))$,
for every (possibly infinite-dimensional)
smooth (resp., $\K$-analytic)
Lie group~$G$, where $\K\in\{\R,\C\}$.
\end{abstract}
\noindent
{\footnotesize
{\bf Classification:\/}
22E67, 46E25, 46T20, 58C25.\\
{\bf Keywords and Phrases:\/} vector bundle, section,
differentiable map, pushforward, bundle map,
universal central extension, mapping group, test function group,
current group, topological module.}\vspace{-2.5 mm}\\

%
%
%
%
%
%
%
%
%
%
%
%
%
%
%
%
%
%
%
%
%
%
%
%
%
\begin{center}
{\large\bf Introduction}\vspace{1 mm}
\end{center}
In this article, we study differentiability
properties of mappings between spaces of sections,
motivated by applications in the theory
of infinite-dimensional Lie groups.
For example,
we are interested in
differentiability properties
of the pushforward
\[ C^\infty(X,f)\!:\; C^\infty(X,E_1)\to C^\infty(X,E_2),
\;\;\;\; \sigma\mto f\circ \sigma\]
of smooth sections
in
smooth locally trivial bundles~$E_1$ and~$E_2$
of locally convex spaces
over the same $\sigma$-compact,
finite-dimensional base~$X$
(and the corresponding pushforward
$\cD^\infty(X,f)\!: \cD^\infty(X,E_1)\to \cD^\infty(X,E_2)$
of compactly supported, smooth sections),
when $f\!:E_1\to E_2$
is a fibre-preserving smooth mapping
taking the zero-section to
the zero-section
(but not necessarily linear on fibres).
As special cases of
Theorem~\ref{pushforw1} and
our main result, Theorem~\ref{pushforw3},
we obtain:\\[2mm]
{\em The mappings
$\cD^\infty(X,f)$ and $C^\infty(X,f)$ are smooth}.\\[2mm]
More generally, we consider mappings
between $C^r$-vector bundles,
and, in the case of $\cD^r(X,f)$, allow $f$ to be defined
not globally but on
an open neighbourhood
of the zero-section only.
Smoothness of mappings (or being of class~$C^r$) is understood
in the sense of Michal-Bastiani
(i.e., we are dealing with Keller's $C^r_c$-maps
in the terminology of \cite{Kel}).
As differentiability properties
of pushforwards are well-understood
when $X$ is compact,
we are primarily
interested in the case where
$X$ is
a $\sigma$-compact,
{\em non-compact\/}, finite-dimensional manifold.\vspace{2 mm}\\
Our main results
can be seen as variants
of \cite{KaM}, Corollary~30.10,
which establishes
smoothness
{\em in the sense of convenient differential calculus\/}
of pushforwards of smooth compactly supported
sections;
and also of the $\Omega$-Lemma
\cite[Theorem~8.7]{Shi} (for finite-dimensional smooth vector bundles,
in the setting of Keller's $C^\infty_c$-maps).
By contrast, our investigations
(within Keller's $C^r_c$-theory)
subsume
the study of pushforwards
of $C^r$-sections (compactly supported
or not) also for finite $r\in \N_0$,
and for possibly infinite-dimensional fibres.\\[2mm]
We have two main applications in mind.
For convenience, let us assume for the remainder of
this introduction that
$X$ is a $\sigma$-compact, finite-dimensional
smooth manifold
(although part of the results will hold
more generally),
and fix $r\in \N_0\cup\{\infty\}$.\vspace{3 mm}\\
{\bf Application~1.\/}
It readily follows from
our main results that
the function spaces
$C^r(X,A)$ and $\cD^r(X,A)$
are topological algebras
under pointwise operations,
for every
locally convex,
associative topological algebra~$A$
(see~\cite{GOO} for further investigations
of such algebras, with a view towards
applications in infinite-dimensional
Lie theory).
Thus $C^r(X)$ and $\cD^r(X)$ are
topological algebras in particular
(cf.\ already~\cite{Mic}).
We also conclude that $C^r(X,E)$
is a topological $C^r(X)$-module
and $\cD^r(X,E)$ a topological
$\cD^r(X)$-module,
for every $C^r$-vector bundle \mbox{$E\to X$.}
Thus the space
$\cD^\infty(X,T^*X)$ of compactly
supported smooth $1$-forms on~$X$ is a topological $\cD^\infty(X)$-module
in particular. This observation is useful
in infinite-dimensional Lie theory;
it is applied in~\cite{Mai} and~\cite{CUR} to prove the existence
of a universal central extension of the test function
group $\cD^\infty(X,G):=\{\gamma\in
C^\infty(X,G)\!: \gamma^{-1}(G\,\take\{1\})
\;\mbox{is relatively compact}\,\}$,
for any semi-simple finite-dimensional Lie group~$G$.\vspace{3 mm}\\
{\bf Application~2.\/}
If $G$ is a smooth or $\K$-analytic Lie group
(where $\K\in\{\R,\C\}$),
modelled on an arbitrary locally convex space,
then $\cD^r(X,G)$ carries a smooth (resp.,
$\K$-analytic) Lie group structure
modelled on $\cD^r(X,L(G))$
in a natural way (\cite{Glo};
cf.\ \cite{Alb} when $\dim(G)<\infty$).
As a second application of our main result,
we show that $C^r(X,G)$
can be given a smooth (resp., $\K$-analytic)
Lie group structure making $\cD^r(X,G)$
an open subgroup.
In the simple special case where~$G$ is a
$\K$-analytic Baker-Campbell-Hausdorff Lie group,
the $\K$-analytic Lie group structure
on $C^r(X,G)$ (which is again BCH)
had already been constructed in~\cite{Glo},
but the technical tools developed there were
too limited to pass to general Lie groups~$G$.
We remark that $C^\infty(X,G)$ had already
been given a smooth Lie group structure
in the sense of convenient differential
calculus (cf.\ \cite{KaM}, Theorem~42.21),
for every smooth Lie group~$G$ in this sense.\\[2mm]
The article is structured as follows.
Sections~1--3 are of a preparatory nature.
We describe the general setting of differential calculus,
collect what we need
concerning differentiability
of mappings between function spaces,
and prove some basic facts concerning spaces
of sections in locally trivial
bundles of locally convex spaces,
and their topologies.
The serious work begins with Section~\ref{sec4},
where the pushforward
$C^r(X,f)$ of $C^r$-sections is proved to be of class~$C^k$
under natural conditions, for globally defined~$f$
(Theorem~\ref{pushforw2}).
For~$f$ defined on an open neighbourhood of the zero-section,
analogous conclusions are established
for the pushforward $C^{\,r}_K(X,f)$
of $C^r$-sections supported in a given
compact subset~$K\sub X$
(Theorem~\ref{pushforwK}),
and finally for $\cD^r(X,f)$
in Section~\ref{sec5}
(Theorem~\ref{pushforw3}).
Stronger conditions
ensure that $\cD^r(X,f)$ is
complex analytic,
or real analytic on a zero-neighbourhood
(Section~\ref{sec6}).
The remainder of the article
is devoted to applications,
as described above. In Section~\ref{sec7},
we discuss continuity of the various natural module
structures on the various types of spaces of sections
(or distribution sections) in vector
bundles. In particular, we shall see
that $C^\infty(\R)$ and $\cD(\R)$ are topological
algebras, whereas the natural
multiplication maps $C^\infty(\R)\times \cD(\R)\to
\cD(\R)$, $C^\infty(\R)\times \cD'(\R)\to\cD'(\R)$,
and $\cD(\R)\times \cD'(\R)\to\cD'(\R)$
are hypocontinuous and thus sequentially continuous,
but discontinuous. Section~\ref{sec8}
describes an application of
the continuity of the $\cD(X)$-module
structure on spaces of compactly
supported sections.
Finally, in Section~\ref{secmapgps},
we show that $C^r(X,G)$ can be made a smooth
(resp., $\K$-analytic) Lie group modelled
on $\cD^r(X,L(G))$,
for every smooth (resp., $\K$-analytic) Lie group~$G$
(Theorem~\ref{dstruckonCr}),
and show that $C^r(X,\sbull)$ is functorial
(Proposition~\ref{Cisfuncto}).\\[2mm]
{\bf Remark} (added in 2013).
Except for updates of the references
and the introduction, this is an unpublished
manuscript dating back to 2002.
I decided to make the manuscript publicly available
as it has been used by some authors.
I should mention
that, in the meantime, I got to know
alternative techniques to deal with the problems
discussed here in a more efficient way.
I want to mention two of these:
First of all, one can easily avoid the use of cutoff-functions
and partitions of unity (as applied in this preprint)
if one uses results concerning pushforwards
like \cite[Proposition 4.23]{ZOO} (with restriction to a relatively compact subset)
instead of the results concerning pushforwards of $C^r_K$-maps
from~\cite{Glo}
(as applied in the current preprint).
More importantly,
one can exploit that the mappings $f_*:=\cD^r(X,f)\colon \cD^r(X,E_1)\supseteq \cD^r(X,U) \to \cD^r(X,E_2)$
considered
in this preprint are \emph{local} maps
in the sense that $f_*(\sigma)(x)$ only depends
on the germ of $\sigma$ at~$x$.
Therefore (as shown in \cite{LOC} and \cite{ZOO}; cf.\ also
\cite{DRN}), the map
$f_*$ will be $C^k$ if its restriction
$C^r_K(X,f)$ to
the set $C^r_K(X,E_1)\cap \cD^r(X,U) =C^r_K(X,U)$
is $C^k$ for each compact subset $K\sub X$.
This condition is much easier to prove,
and makes it unnecessary to work explicitly with
bases of the topology on spaces of compactly
supported sections (which are used in this preprint).\\[2mm]
Summing up, I recommend to check~\cite{ZOO}
for possible superior versions
of some of the results, and easier strategies of proof.
This source is, however, made more complicated to read in other ways, because
of its added generality
(as the real and complex ground fields are replaced
by more general topological fields).
A streamlined exposition of essential facts
in the real and complex cases will be made available in~\cite{GaN}.
\section{Basic definitions and facts from differential calculus}\label{sec1}
We use the framework of
differential calculus of smooth and analytic
mappings between open subsets of arbitrary locally convex spaces
as described in~\cite{RES}
(cf.\ also \cite{Bas}, \cite{BaS2}, \cite{Ham}, \cite{Kel},
\cite{Mil},
\cite{Nee}).
We briefly recall various
basic definitions and facts.
\begin{numba}
Suppose that $E$ and $F$ are real locally convex
spaces,
$U$ is an open subset of~$E$,
and $f\!: U\to F$ a map.
We say that $f$ is {\em of class $C^0$\/}
if it is continuous, and set $d^0f:=f$.
If~$f$ is continuous, we say
that~$f$ is {\em of class $C^1$\/}
if the (two-sided) directional derivative
$df(x,h):=\lim_{t\to 0}t^{-1}(f(x+th)-f(x))$
exists for all $(x,h)\in U\times E$
(where $t\in \R\take\{0\}$ with $|t|$ sufficiently small),
and the mapping $df\!: U\times E\to F$
is continuous.
Recursively, we define $f$
to be {\em of class~$C^k$\/} for $2\leq k\in\N$
if it is of class~$C^{k-1}$ and
$d^{k-1}f\!:U\times E^{2^{k-1}-1}\to F$ (having been defined
recursively) is a mapping of class~$C^1$
on the open subset $U\times E^{2^{k-1}-1}$ of the locally convex space
$E^{2^{k-1}}$. We then set $d^kf:=d(d^{k-1}f)\!:
U\times E^{2^k-1}\to F$.
The mapping~$f$
is called {\em smooth\/} or {\em of class
$C^\infty$\/} if it is of class
$C^k$ for all $k\in\N$.
\end{numba}
The preceding definition
of $C^r$-maps
(where $r\in \N_0\cup\{\infty\}$)
is particularly well-suited for
inductive arguments.
It is equivalent
to the usual definition of
$C^r$-maps
in the sense of Michal-Bastiani
(\cite{BCR}, p.\,24; \cite{RES}, Definition~1.8),
see~\cite{RES}, Lemma~1.14.\footnote{Note that
the ``iterated differentials'' $d^kf$ defined above
are denoted $D^kf$ in our main reference~\cite{RES},\linebreak
\hspace*{6 mm}whereas $d^kf$ has another meaning there.}
\begin{numba}
Since compositions of $C^r$-maps are of class~$C^r$
for $0\leq r\leq \infty$ 
(\cite{RES}, Proposition~1.15),
$C^r$-manifolds modelled on locally convex spaces
can be defined in the usual way,
using an atlas of charts with $C^r$-transition
functions. A {\em smooth Lie group\/}
is a group, equipped with a smooth manifold structure
modelled on a locally convex space,
with respect to which the group multiplication
and inversion are smooth mappings.
\end{numba}
\begin{numba}\label{pt1.3}
Let $X$ be a $C^r$-manifold (where $1\leq r\leq \infty$),
and $f\!:X\to E$ a mapping of class~$C^r$ into
a real locally convex space.
Then the tangent map $Tf\!: TX\to TE=E\times E$
has the form $(x,v)\mto (f(x),df(x,v))$
for $x\in X$ and $v\in T_xX$, where
$df:=\pr_2\circ Tf\!:TX\to E$.
We set $d^0f:=f$, $T^0X:=X$, and define
$d^kf\!:T^kX\to E$ recursively via $d^kf:=d(d^{k-1}f)$
for all $k\in \N$, $k\leq r$.
\end{numba}
Functions partially $C^r$ will
play an important role in our discussions.
\begin{numba}\label{defnpartCr}
Let $X$ be a Hausdorff topological space,
$Y$ be a $C^r$-manifold,
$f\!: U \to F$
be a mapping into a real locally convex space,
defined on an open subset $U\sub X\times Y$.
We say that
$f$ is {\em partially $C^r$ in the second argument\/}
if $f(x,\sbull)\!: V_x\to F$ is a mapping of class $C^r$
for all $x\in \pr_X(U)$ (where $V_x:=\{y\in Y\!:
(x,y)\in U\}$), and
the functions
$d_2^kf\!: \bigcup_{x\in \pr_X(U)}\{x\}\times T^k(V_x)\to F$,
defined via
$d_2^kf(x,\sbull):=d^k(f(x,\sbull))$
for $x\in \pr_X(U)$,
are continuous for all $k\in \N_0$, $k\leq r$.
Note that if $Y=E$ is a real locally
convex space, then
$d_2^kf$ simply is a mapping $U\times E^{2^k-1}\to F$.
\end{numba}
\begin{numba}\label{defncan}
Let $E$ and~$F$ be
complex locally convex spaces,
and~$U$ be an open subset of~$E$.
A function $f\!: U\to F$ is called
{\em complex analytic\/} or $\C$-analytic
if it is continuous
and for every $x\in U$,
there exists a $0$-neighbourhood~$V$ in~$E$
such that $x+V\sub U$ and
$f(x+h)=\sum_{n=0}^\infty \beta_n(h)$
for all $h\in V$
as a pointwise limit,
where $\beta_n\!: E\to F$ is
a continuous homogeneous polynomial over~$\C$
of degree~$n$, for each~$n\in\N_0$
(\cite{BaS2}, Defn.\ 5.6).
\end{numba}
\begin{numba}\label{dfcx}
If $F$ is sequentially complete
in the situation of
{\bf \ref{defncan}}, then~$f$ is complex analytic in the preceding sense
if and only if it is complex differentiable
on each affine line and continuous (\cite{BaS2}, Theorems 6.2 and 3.1).
In general, $f$ is complex analytic if
and only if it is smooth and $df(x,\sbull)\!:E\to F$
is complex linear for all $x\in U$
(\cite{RES}, Lemma~2.5).
\end{numba}
\begin{numba}\label{defnran}
Let $E$ and $F$ be real locally convex spaces,
$U$ be an open subset of~$E$, and
$f\!: U\to F$ be a mapping.
Following Milnor's lines, we
call~$f$ {\em real analytic\/}
or $\R$-analytic
if it extends to a complex analytic
mapping $V \to F_\C$
on some open neighbourhood $V$
of~$U$ in $E_\C$.
\end{numba}
\begin{numba}\label{complcomp}
Compositions of $\K$-analytic
mappings are $\K$-analytic
for $\K\in\{\R,\C\,\}$
(\cite{RES}, Proposition~2.7, Proposition~2.8).
Thus complex (analytic)
manifolds and real analytic manifolds,
as well as complex (analytic) Lie groups
and real analytic Lie groups modelled
on locally convex spaces
can be defined in the usual way.
\end{numba}
The following
observation will be useful later:
\begin{prop}\label{inittop}
Let $E$ and $F$ be locally convex
spaces, $U$ an open subset of $E$,
and $f\!: U\to F$ a map.
Suppose that the topology on $F$ is initial with
respect to a family $(\rho_i)_{i\in I}$
of continuous linear mappings
$\rho_i\!: F\to F_i$ into locally convex spaces~$F_i$,
and suppose that the embedding $\rho:=
(\rho_i)_{i\in I}\!:
F\to \prod_{i\in I} F_i$
has sequentially closed image.
Then $f$ is of class $C^r$
if and only if $f_i:=\rho_i\circ f\!: U\to F_i$
is of class $C^r$ for each $i\in I$.
\end{prop}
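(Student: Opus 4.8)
The plan is to handle the two implications separately. The forward implication is immediate: each $\rho_i$ is continuous and linear, hence smooth, so if $f$ is of class~$C^r$, then $f_i=\rho_i\circ f$ is of class~$C^r$ by the Chain Rule (\cite{RES}, Proposition~1.15). The substance lies in the converse. Here the idea is to exploit that, since $F$ carries the initial topology with respect to the family $(\rho_i)_{i\in I}$ and $\rho=(\rho_i)_{i\in I}$ is injective, the map $\rho\!: F\to P:=\prod_{i\in I}F_i$ is a topological embedding onto the linear subspace $\rho(F)$, which is sequentially closed in~$P$ by hypothesis. The task thus reduces to showing that, if every $f_i$ is of class~$C^r$, the directional derivatives of~$f$ of all relevant orders exist in~$F$ and are continuous.

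I would establish the converse by induction on $r\in\N_0$, dealing with the case $r=\infty$ at the end by recalling that $f$ is smooth if and only if it is of class~$C^k$ for every $k\in\N$. The case $r=0$ is trivial: if every $f_i$ is continuous, then $f$ is continuous by the universal property of the initial topology. For the passage to~$C^1$, fix $(x,h)\in U\times E$ and consider the difference quotients $q(t):=t^{-1}\bigl(f(x+th)-f(x)\bigr)\in F$ for small real $t\neq 0$. Since $f_i$ is of class~$C^1$, we have $\rho_i(q(t))=t^{-1}(f_i(x+th)-f_i(x))\to df_i(x,h)$ as $t\to 0$, whence $\rho(q(t))\to\ell:=(df_i(x,h))_{i\in I}$ in~$P$. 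Evaluating along $t=1/n$ exhibits $\ell$ as the limit of a sequence in the linear subspace $\rho(F)$; as $\rho(F)$ is sequentially closed, $\ell\in\rho(F)$, say $\ell=\rho(v)$ with $v\in F$. Since $\rho$ is a topological embedding, $\rho(q(t))\to\rho(v)$ forces $q(t)\to v$ in~$F$, so $df(x,h):=v$ exists. Moreover $\rho_i\circ df=df_i$ and each $df_i$ is continuous, so $df\!: U\times E\to F$ is continuous, again by the universal property of the initial topology. Hence $f$ is of class~$C^1$.

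For the inductive step I would use that, for $r\geq 1$, a map is of class~$C^r$ if and only if it is of class~$C^1$ and its differential is of class~$C^{r-1}$ (a standard reformulation of the recursive definition in Section~\ref{sec1}; cf.\ \cite{RES}). We have just seen that $f$ is of class~$C^1$, and $\rho_i\circ df=df_i$ is of class~$C^{r-1}$ because $f_i$ is of class~$C^r$. The hypotheses of the Proposition are unaffected by replacing $(E,U)$ with $(E^2,U\times E)$ while keeping $F$ and the~$\rho_i$, so the induction hypothesis applies to $df\!:U\times E\to F$ and shows that $df$ is of class~$C^{r-1}$; therefore $f$ is of class~$C^r$.

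The one genuinely delicate point, and the only place where sequential closedness of $\rho(F)$ is needed, is the step to~$C^1$: the directional derivative $df(x,h)$ is a priori merely a pointwise limit of difference quotients, so without the closedness hypothesis one could only conclude that $df(x,h)$ lies in the closure of~$\rho(F)$ in~$P$, rather than in $\rho(F)\cong F$ itself. The remaining ingredients — the embedding into the product, the universal property of the initial topology, and the inductive bookkeeping — are routine.
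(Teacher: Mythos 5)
Your argument is correct and follows essentially the same route as the paper's proof: induction on $r\in\N_0$, continuity via the universal property of the initial topology, existence of $df(x,h)$ by observing that the difference quotients converge coordinatewise in $\prod_{i\in I}F_i$ to a point which lies in the sequentially closed image $\rho(F)$ (extracted along a sequence of parameters), and then applying the induction hypothesis to $df$ using $\rho_i\circ df=df_i$. Your write-up merely makes explicit a few points the paper leaves terse (the chain-rule direction, the $t=1/n$ sequence, and the case $r=\infty$), so no further comment is needed.
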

\begin{proof}
The necessity of the condition is apparent.
It is enough to prove
the sufficiency for $r\in \N_0$,
which we do by induction.
The case $r=0$ is clear, since $E$ carries the
initial topology. If $r\in \N$ and
if $\rho_i\circ f$ is of class $C^r$
for each~$i\in I$,
then $f$ is continuous by the preceding.
Clearly the $d(\rho\circ f)(x,v)$ exists
for $x\in U$, $v\in E$,
and is given by $d(\rho\circ f)(x,v)=(d(\rho_i\circ f)(x,v))_{i\in I}$.
As the differential quotient $d(\rho\circ f)(x,v)$
can be calculated by means of a sequence of difference
quotients and~$H$ is sequentially closed,
we have $d(\rho\circ f)(x,v)\in H$.
Thus $df(x,v)=(\rho|^H)^{-1}(d(\rho\circ f)(x,v))$ exists,
and $\rho_i\circ df=\pr_i\circ \rho\circ df
=\pr_i\circ d(\rho\circ f)=d(\rho_i\circ f)$
is a mapping of class $C^{r-1}$ for each $i\in I$.
By induction, $df$ is of class $C^{r-1}$
and so $f$ is of class $C^r$.
\end{proof}
\section{Functions spaces and mappings between them}
As a preliminary for our studies of spaces
of sections and mappings between such spaces,
we need to recall the definition
of various function spaces,
and study mappings between these.\vspace{2 mm}\\
{\bf Convention.\/}
Let $r\in \N_0\cup\{\infty\}$,
and $|r]:=\{n\in \N_0\!: n\leq r\}$.
If $1\leq r \leq \infty$,
we let $X$ be a
$C^r$-manifold in the sense of \cite{RES},
modelled on a locally convex space.
If $r=0$, we let~$X$ be any Hausdorff
topological space.
Throughout this section,
the symbols $r$, $|r]$, and $X$
will have the meanings just described.
We shall use the convention
$\infty + n:=\infty$, for
any $n\in \Z$. To unify our notation in the following definitions
and proofs, we define $T^0X:=X$
also in the case where~$X$ is a topological space,
and call continuous mappings on~$X$
also $C^0$-maps.\vspace{2 mm}\\
%
%
%
%
%
%
%
%
\begin{defn}\label{defntopfs}
If~$E$ is a locally convex topological vector space over~$\K$,
we let $C^r(X,E)$ be the $\K$-vector
space of $E$-valued $C^r$-mappings on~$X$.
Given
$\gamma\in C^{r}(X,E)$, we set $d^0\gamma:=\gamma$.
If $r>0$, then
$\gamma$ gives rise to
$C^{r-n}$-functions
$d^n\gamma \!: T^nX\to E$
for all $n\in |r]$ (see {\bf \ref{pt1.3}}).
We give $C^r(X,E)$ the topology which makes
\[
(d^n(\sbull))_{n\in |r]}\!: C^{r}(X,E)\to
\prod_{n\in |r]} C(T^n(X),E)_c,\;\;\;
\gamma\mto (d^n\gamma)_{n\in |r]}
\]
a topological embedding,
where $C(T^n(X),E)_c$
denotes $C(T^n(X),E)$,
equipped with the topology of compact convergence
(which coincides with the compact-open topology).
Given a compact subset $K$ of~$X$,
we define
\[
C^{\,r}_K(X,E):=\{
\gamma\in C^{r}(X,E)\!: \gamma|_{X\,\take\, K}=0\,\},
\]
and equip this closed vector subspace of
$C^r(X,E)$ with the induced topology.
If $X$ is a $\sigma$-compact finite-dimensional
$C^r$-manifold or $r=0$ and $X$ a $\sigma$-compact
locally compact space, we let $\cD^r(X,E)=\bigcup_{K} C^r_K(X,E)$,
equipped with the locally convex direct limit topology.
Occasionally, we shall write $\cD(X,E):=
\cD^\infty(X,E)$.
When $\K\in\{\R,\C\}$ is understood,
as usual we abbreviate $C^\infty(X):=
C^\infty(X,\K)$, $C^\infty_K(X):=C^\infty_K(X,\K)$,
and $\cD(X):=\cD^\infty(X,\K)$. 
\end{defn}
\begin{rem}\label{trivialI}
If $U$ is an open subset of $X$ in the preceding
situation, then obviously the restriction map
$\rho_U\!: C^r(X,E)\to C^r(U,E)$, $\gamma\mto \gamma|_U$
is continuous.
Furthermore, the topology on $C^r(X,E)$ is the initial
topology with respect to the family $(\rho_U)_{U\in \cU}$,
for every open cover $\cU$ of~$X$.
In fact, if $j\in |r]$ and $K$ is a compact subset
of $T^jX$, we find finitely many compact subsets
$C_1,\ldots, C_n$ of $X$ such that $C_i\sub U_i$
for some $U_i\in \cU$
and $\pi(K)\sub C_1\cup\cdots\cup C_n$,
where $\pi\!: T^jX\to X$, $T^j_x(X)\ni v \mto x$.
Thus $K=\bigcup_{i=1}^n K_i$,
where $K_i:=K\cap\pi^{-1}(C_i)\sub T^jU_i$.
The assertion easily follows.
\end{rem}
\begin{rem}\label{indu}
Note that if $X$ is a $\sigma$-compact
finite-dimensional manifold and $K$ is a compact subset of~$X$,
then we can find a sequence $(K_n)_{n\in\N}$ of compact
subsets of~$X$ such that $K_1=K$,
$K_n\sub \Int(K_{n+1})$,
and $\bigcup_{n\in\N}K_n=X$.
Clearly $\{K_n\!: n\in \N\}$ is a countable,
cofinal subset of the set of all compact
subsets of~$X$, directed under inclusion,
whence
$\cD^r(X,E)=\dl \, C^{\,r}_{K_n}(X,E)$,\vspace{-.4 mm}
a countable strict direct limit.
We deduce that
$\cD^r(X,E)$ induces the original topology
on $C^{\, r}_K(X,E)$, for every compact subset
$K\sub X$ (\cite{Bou}, Chapter~II,
\S4.6, Proposition~9\,(i)).
\end{rem}
The following fact (\cite{Glo}, Proposition~3.10),
which generalizes \cite{Nee}, Proposition III.7,
will be useful later. When $r>0$,
we assume $\dim(X)<\infty$ here to exclude
trivialities.
\begin{prop}\label{pushforw0}
Given $r$ and $X$ as above,
let~$E$
and~$F$ be locally convex spaces,
$U$ be an open subset of~$E$,
$K$ be a compact subset of~$X$,
$k\in \N_0\cup\{\infty\}$,
and $f\!: X\times U \to F$ be a mapping
such that
\begin{itemize}
\item[\n (a)]
$f(x,0)=0$ for all $x\in X\,\take\, K$,
\item[\n (b)]
$f$ is partially $C^k$ in the second argument
$($see {\bf \ref{defnpartCr}}$)$, and
\item[\n (c)]
The functions $d_2^jf$ are of
class $C^r$, for every $j\in |k]$.
\end{itemize}
If $K\not=X$, we assume that $0\in U$.
Then $C^{\, r}_K(X,U):=\{\gamma\in C^{\, r}_K(X,E)\!:
\im(\gamma)\sub U\}$ is an open subset
of $C^{\, r}_K(X,E)$, and
\[
f_*\!: C^{\,r}_K(X,U)\to C^{\,r}_K(X,F),\;\;\;\;
\gamma\mto f\circ (\id_X,\gamma)
\]
is a mapping of class~$C^k$.\Punkt
\end{prop}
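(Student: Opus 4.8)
The plan is to prove the assertion by induction on $k\in\N_0$; the case $k=\infty$ then follows, since being of class $C^\infty$ means being of class $C^k$ for every $k\in\N$. After reducing — by a partition of unity subordinate to a finite cover of $K$ by chart domains — to the case where $X$ is an open subset of $\R^d$ (no reduction is needed when $r=0$, as there are then no derivatives to track), one first disposes of the openness of $C^{\,r}_K(X,U)$ in $C^{\,r}_K(X,E)$. This is elementary: for $\gamma$ in this set, $\gamma(X)$ (which is $\gamma(K)$ or $\gamma(K)\cup\{0\}$) is compact and contained in the open set~$U$, so a standard covering argument yields a convex open $0$-neighbourhood $W$ in~$E$ with $\gamma(X)+W\sub U$; then $\gamma+\{\zeta\in C^{\,r}_K(X,E)\!:\zeta(K)\sub W\}$ is a neighbourhood of $\gamma$ in $C^{\,r}_K(X,E)$ (open already for the compact-open topology on the $n=0$ component of the embedding that defines the topology) and is contained in $C^{\,r}_K(X,U)$, because its members vanish off~$K$, where they equal $0\in U$ (using $0\in U$ when $K\ne X$).

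For the differentiability one first checks that $f_*$ is well defined, i.e.\ that $f\circ(\id_X,\gamma)\in C^{\,r}_K(X,F)$ for every $\gamma\in C^{\,r}_K(X,U)$. It vanishes off~$K$ since $f(x,0)=0$ there, and it is of class~$C^r$ because a higher chain rule (of Fa\`{a}-di-Bruno type) expresses each $d^n(f\circ(\id_X,\gamma))$, $n\in|r]$, as a finite sum of maps of the shape ``an $X$-partial derivative of some $d_2^jf$, evaluated along the jet $(d^0\gamma,\dots,d^n\gamma)$ of~$\gamma$'', with $j$ ranging over $|k]$ only: when $n>k$, the additional ``fibre differentiations'' that arise are absorbed into $y$-directional derivatives of the jointly $C^r$ map $d_2^kf$ rather than into iterated fibre derivatives of~$f$, which is exactly why it suffices to impose~(c) for $j\in|k]$, and why this does not restrict~$r$. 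Each such summand depends continuously on $\gamma\in C^{\,r}_K(X,E)$ by the standard continuity properties of the compact-open topology (pre- and post-composition with continuous maps, compatibility with finite products), together with continuity of the maps $d_2^jf$ and of their partial derivatives, all of which follows from (b) and~(c). Hence $f_*$ maps $C^{\,r}_K(X,U)$ continuously into $C^{\,r}_K(X,F)$, which settles the base case $k=0$.

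For the inductive step, assume the assertion with $k-1$ in place of~$k$. Given $f$ as in the statement with $k\ge1$, set $d_2f:=d_2^1f$ and consider $g\colon X\times(U\times E)\to F$, $g(x,y,v):=d_2f(x,y,v)$. Then $g$ satisfies (a)--(c) with $k-1$ in place of~$k$ and $U\times E$ in place of~$U$ (and $0\in U\times E$ when $K\ne X$, since $0\in U$): indeed $g(x,y,0)=0$ as a directional derivative in the zero direction, $g(x,\cdot,\cdot)=d(f(x,\cdot))$ is of class~$C^{k-1}$, and for $j\in|k-1]$ the map $d_2^jg$ is, under the canonical identification $T^j(U\times E)\isom T^{j+1}U$, the map $d_2^{j+1}f$, which is of class~$C^r$. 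By the inductive hypothesis, $g_*\colon C^{\,r}_K(X,U\times E)\to C^{\,r}_K(X,F)$ is of class~$C^{k-1}$; via the natural isomorphism $C^{\,r}_K(X,U\times E)\isom C^{\,r}_K(X,U)\times C^{\,r}_K(X,E)$, this means that $(\gamma,\eta)\mapsto d_2f\circ(\id_X,\gamma,\eta)$ is a $C^{k-1}$-map on $C^{\,r}_K(X,U)\times C^{\,r}_K(X,E)$. It remains to show that $f_*$ is of class~$C^1$ with $df_*=g_*$ under this identification. For fixed $\gamma,\eta$, the difference quotient $t^{-1}\bigl(f_*(\gamma+t\eta)-f_*(\gamma)\bigr)$ evaluated at~$x$ equals $\int_0^1 d_2f(x,\gamma(x)+st\eta(x),\eta(x))\,ds$, so the error $t^{-1}\bigl(f_*(\gamma+t\eta)-f_*(\gamma)\bigr)-g_*(\gamma,\eta)$ equals $\int_0^1\bigl(d_2f(x,\gamma(x)+st\eta(x),\eta(x))-d_2f(x,\gamma(x),\eta(x))\bigr)\,ds$, and one verifies that this tends to~$0$ in $C^{\,r}_K(X,F)$ as $t\to0$ by differentiating under the integral and estimating uniformly on compacta, using uniform continuity of the continuous maps $d_2^jf$ and their $X$-partials on the relevant compacta and the equicontinuity supplied by $s\in[0,1]$. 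Once $df_*=g_*$ is established, $df_*$ is of class~$C^{k-1}$, so $f_*$ is of class~$C^k$, completing the induction.

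The step I expect to be the main obstacle is the bookkeeping behind Paragraph~2: organizing the higher chain rule so that only the $d_2^jf$ with $j\in|k]$ (and their partial derivatives, which exist because each $d_2^jf$ is of class~$C^r$ \emph{jointly}) appear in $d^n(f\circ(\id_X,\gamma))$, and then verifying, via the continuity lemmas for spaces of continuous maps under the compact-open topology, that the resulting expressions depend continuously — and, in the inductive step, in a $C^1$ fashion — on~$\gamma$. Once these lemmas are in place, the remaining estimate for the directional derivative is routine.
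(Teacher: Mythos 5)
Your overall architecture is sound and is essentially the technique this paper uses for its twin result, Proposition~\ref{pushforw1} (the paper itself does not prove Proposition~\ref{pushforw0}; it quotes it from [Glo, Proposition~3.10]): openness of $C^{\,r}_K(X,U)$ via compactness of $\gamma(X)$, the base case $k=0$ from joint $C^r$-ness of $f$ (hypothesis~(c) with $j=0$) plus continuity of composition in the compact-open topologies, and the inductive step by applying the statement for $k-1$ to $g:=d_2f$ on $X\times(U\times E)$ and identifying $df_*=g_*$ through the Fundamental Theorem of Calculus and uniform convergence on compacta. Your verification that $g$ inherits (a)--(c) (with $d_2^jg=d_2^{j+1}f$ and $g(x,0,0)=0$ off $K$) is correct, and this way of organizing the induction is a clean variant of the paper's induction in Proposition~\ref{pushforw1}, which instead proves $d^j(f_*)=(d_2^jf)_*$ step by step.

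The one genuine gap is your opening reduction ``by a partition of unity subordinate to a finite cover of $K$ by chart domains, to the case where $X$ is an open subset of $\R^d$''. A compact $K$ is in general not contained in a single chart, so this is not a reduction to one Euclidean picture; and patching the \emph{nonlinear} map $f_*$ together from chart pieces is not free: the paper's own patching argument (proof of Theorem~\ref{pushforwK}) multiplies the section inside the argument of $f$ by cutoffs, which requires $U$ to be fibrewise $\R$-balanced --- here $U$ is an arbitrary open subset of $E$, so that trick is unavailable, and merely decomposing the \emph{output} $f_*\gamma=\sum_i h_i\cdot f_*\gamma$ does not localize the domain. Since the rest of your write-up leans on this reduction ($X$-partials, multi-index Fa\`a-di-Bruno bookkeeping, coordinate estimates), you should either delete it and run the argument intrinsically --- replace partial derivatives by the iterated differentials $d^n$ on $T^nX$ exactly as in the proof of Proposition~\ref{pushforw1}, where no charts are needed at all --- or localize only the \emph{target}, using that the topology on $C^r_K(X,F)$ is initial with respect to restrictions to chart domains (Remark~\ref{trivialI}, Proposition~\ref{inittop}) while keeping the domain $C^{\,r}_K(X,U)$ global. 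Two smaller points: the Fa\`a-di-Bruno discussion in your base case is superfluous (joint $C^r$-ness of $f=d_2^0f$ and the chain rule already give $f_*\gamma\in C^r$), and in the difference-quotient step you should note that for small $|t|$ the segments $\{\gamma(x)+st\eta(x):s\in[0,1]\}$ lie in $U$ uniformly in $x$ (choose the $0$-neighbourhood $W$ with $\gamma(X)+W\sub U$ balanced and convex, and use boundedness of $\eta(X)$), since $U$ is not assumed convex.
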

Our main results will be analogues of the preceding
proposition
in the setting of spaces of sections.
The following variant,
valid even for infinite-dimensional~$X$,
is vital
for our later constructions:
\begin{prop}\label{pushforw1}
Given $r$ and $X$ as above,
let $E$ and~$F$ be locally convex spaces,
$K$ be a compact subset of~$X$,
$k\in \N_0\cup\{\infty\}$,
and $f\!: X\times E  \to F$ be a mapping
such that
\begin{itemize}
\item[\n (a)]
$f$ is partially $C^k$ in the second argument,
and
\item[\n (b)]
The functions $d_2^jf$ are of
class $C^r$, for every $j\in |k]$.
\end{itemize}
Then
\[
f_*\!: C^{\,r}(X,E)\to C^{\,r}(X,F),\;\;\;\;
\gamma\mto f\circ (\id_X,\gamma)
\]
is a mapping of class~$C^k$.
\end{prop}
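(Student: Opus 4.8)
The plan is to reduce Proposition~\ref{pushforw1} to Proposition~\ref{pushforw0} by a localization argument, exploiting Remark~\ref{trivialI} (the topology on $C^r(X,E)$ is initial with respect to restrictions to the members of any open cover) together with Proposition~\ref{inittop}. The point is that Proposition~\ref{pushforw0} is already available for pushforwards of compactly supported $C^r$-sections under hypotheses (a) and (b), once we also know $f(x,0)=0$ off a compact set; but here $f$ is globally defined (on all of $X\times E$) and we do not assume compact support, so we want to pass to a suitable open cover of $X$ by relatively compact sets and patch.

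First I would fix a $\sigma$-compact exhaustion (or, in the general case, any open cover $\cU=(U_j)_{j\in J}$ of $X$ by open sets with compact closure $K_j:=\overline{U_j}$; for infinite-dimensional $X$ one uses charts and relatively compact chart domains). By Remark~\ref{trivialI}, the topology on $C^r(X,E)$ is initial with respect to the restriction maps $\rho_j\colon C^r(X,E)\to C^r(U_j,E)$, and likewise for $C^r(X,F)$. Since $\rho\colon C^r(X,F)\to\prod_j C^r(U_j,F)$ has closed image (a family of sections agreeing on overlaps glues), Proposition~\ref{inittop} tells us that $f_*$ is $C^k$ as soon as each composite $\rho_j\circ f_*=(f|_{U_j\times E})_*\circ\rho_j$ is $C^k$. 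As $\rho_j$ is continuous linear, hence smooth, it remains to show that the pushforward
\[
(f|_{U_j\times E})_*\colon C^r(U_j,E)\to C^r(U_j,F)
\]
is of class $C^k$ for each $j$. The hypotheses (a), (b) for $f$ restrict to the corresponding hypotheses for $f|_{U_j\times E}$.

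Next I would realize $(f|_{U_j\times E})_*$ on $C^r(U_j,E)$ as a pushforward on a space of $C^r_K$-sections so that Proposition~\ref{pushforw0} applies. Here the standard trick is a cutoff function: choose $\chi_j\in C^r(X)$ (or $C^r(U_j')$ for a slightly larger relatively compact $U_j'\supseteq\overline{U_j}$) with $\chi_j\equiv 1$ on a neighbourhood of $\overline{U_j}$ and $\chi_j$ compactly supported, and replace $f$ by $\widetilde f(x,v):=f(x,\chi_j(x)v)$ or, more simply, work with $C^r$-sections on $U_j'$ supported in $\overline{U_j}$; on the relevant open set $\widetilde f$ agrees with $f$, $\widetilde f(x,0)=0$ off the support of $\chi_j$, and $\widetilde f$ still satisfies (a),(b) of Proposition~\ref{pushforw0} with $U=E$ (so the condition $0\in U$ is automatic and the openness of $C^r_K(X,U)$ is trivial). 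Proposition~\ref{pushforw0} then gives that $\widetilde f_*$, and hence $(f|_{U_j\times E})_*$ after restriction, is of class $C^k$. Combining over all $j$ via Proposition~\ref{inittop} finishes the proof.

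The main obstacle is the bookkeeping in the localization step: one has to check that the restriction $C^r(X,E)\to C^r(U_j,E)$, the cutoff modification $f\mapsto\widetilde f$, and the inclusion of $C^r_K$-sections into $C^r(U_j,E)$ fit together so that the hypotheses of Proposition~\ref{pushforw0} are genuinely met (in particular that $\widetilde f$ is partially $C^k$ in the second argument with $d_2^j\widetilde f$ of class $C^r$, which follows since multiplication $\K\times E\to E$ is continuous bilinear, hence smooth, and $\chi_j$ is $C^r$), and that every section in $C^r(U_j,E)$ actually lies in the image of the relevant restriction-after-cutoff map up to the identification needed. Handling the infinite-dimensional base $X$ requires one extra remark: the exhaustion argument is replaced by covering $X$ by relatively compact chart domains, which is legitimate because Remark~\ref{trivialI} and Proposition~\ref{inittop} were stated without any finite-dimensionality hypothesis. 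None of this involves hard analysis; the content has been front-loaded into Propositions~\ref{inittop} and~\ref{pushforw0}.
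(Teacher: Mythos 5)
Your strategy has a genuine gap, and it is two-fold. First, a local error: your cutoff $\widetilde f(x,v):=f(x,\chi_j(x)v)$ does \emph{not} satisfy $\widetilde f(x,0)=0$ off $\Supp(\chi_j)$ --- one has $\widetilde f(x,0)=f(x,0)$, and Proposition~\ref{pushforw1} (unlike Proposition~\ref{pushforw0}) imposes no vanishing condition on $f(x,0)$. So hypothesis~(a) of Proposition~\ref{pushforw0} fails for your $\widetilde f$. This part is repairable: cut off the \emph{output}, e.g.\ $\widetilde f(x,v):=\chi_j(x)\,f(x,v)$ (smoothness of scalar multiplication $\R\times F\to F$ gives (b),(c)); then $\rho_{U_j}\circ f_*=\rho_{U_j}\circ\iota\circ\widetilde f_*\circ m_{\chi_j}$ on a set where $\chi_j\equiv 1$, with $m_{\chi_j}$, $\iota$, $\rho_{U_j}$ continuous linear, and Proposition~\ref{pushforw0} applies.

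The second problem is structural and not repairable within your scheme: Proposition~\ref{pushforw1} is stated (and needed) for $X$ an arbitrary $C^r$-manifold modelled on a possibly infinite-dimensional locally convex space when $r\geq 1$, and for an arbitrary Hausdorff space when $r=0$; the paper stresses that this is exactly what makes it ``vital for later constructions'' (it is invoked in Theorem~\ref{pushforw2} for bundles over a not necessarily finite-dimensional base, and in Lemmas~\ref{famgivessec} and~\ref{anyatlas}). In that generality your localization collapses: an infinite-dimensional manifold has no nonempty relatively compact open subsets and no nonzero compactly supported $C^r$-functions (cf.\ Remark~\ref{notinterest}; compact sets have empty interior), so neither the cover by relatively compact chart domains nor the cutoffs $\chi_j$ exist, and $C^r_K$-spaces are trivial; likewise a general Hausdorff $X$ (case $r=0$) need not be locally compact or admit Urysohn functions. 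Moreover Proposition~\ref{pushforw0} is itself only stated for $\dim(X)<\infty$ when $r>0$, so there is nothing to reduce to. Your argument therefore proves only the finite-dimensional (resp.\ locally compact) case. The paper instead proves Proposition~\ref{pushforw1} directly, by induction on the order of differentiability: the $C^0$-step uses the identity $d^n(f_*\gamma)=(d^nf)_*(T^n\gamma)$ and continuity of pushforward and of $\gamma\mapsto T^n\gamma$ on spaces with the compact-open topology, and the induction step uses the Fundamental Theorem of Calculus plus uniform continuity on compact subsets of $T^nX$ to identify $d(d^j f_*)=(d_2^{j+1}f)_*$ --- no cutoffs, partitions of unity, or compactness of (subsets of) $X$ enter, which is precisely why it survives the infinite-dimensional setting.
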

\begin{proof}
We remark first that
as $f$ is a mapping of class~$C^r$ by hypothesis\,(b),
the Chain Rule shows that
$f_*\gamma$ is of class~$C^r$, for every
$\gamma\in C^r(X,E)$.\vspace{1.7 mm}\\
It clearly suffices to prove the proposition for
$k<\infty$. We prove the following claim by induction
on~$0\leq j\leq k$, thus establishing the assertion:\vspace{1.7 mm}\\
{\bf Claim.\/} {\em The mapping $f_*$ is of class $C^j$,
and $d^{j}(f_*)=(d_2^{j}f)_*$, where we identify
$C^r(X,E)^{2^j}$ with
$C^r(X, E^{2^j})$ by means of\/}
\cite{Glo}, {\em Lemma\/}~3.4.\vspace{1.7 mm}\\
{\em Case $j=0$\/}:
In this case, we only need to show
that~$f_*$ is continuous.
Recall that $T^0X:=X$, $T^0E:= E$,
and $d^0f:=f$.
Given $\gamma\in C^r(X, E)$,
furthermore $T^0\gamma:=\gamma$.
If $r>0$, 
we have
\[
T(f_*\gamma) = T(f\circ (\id_X,\gamma)) =Tf\circ (\id_{TX}, T\gamma)
\]
using the identification
$T(X\times E)\isom T(X)\times T(E)$,
and therefore $d(f_*\gamma)=df\circ (\id_{TX}, T\gamma)=(df)_*(T\gamma)$.
Inductively, we obtain
\begin{equation}\label{eqndns}
d^n(f_*\gamma)=d^nf\circ (\id_{T^nX}, T^n\gamma)=(d^nf)_*(T^n\gamma)
\end{equation}
for all $n\in |r]$.
Note that $(d^nf)_*\!: C(T^nX, T^nE)_{c.o.}\to C(T^nX,F)_{c.o.}$ 
is continuous by \cite{Glo}, Lemma~3.9,
and $C^r(X,E)\to C(T^nX, T^nE)_{c.o.}$,
$\gamma\mto T^n\gamma$ is continuous by {\em loc.\,cit.}\
Lemma~3.8.\footnote{Strictly speaking, all of the cited lemmas
are formulated in \cite{Glo}
for finite-dimensional~$X$ only, but they\linebreak
\hspace*{5.5 mm} remain valid for infinite-dimensional~$X$, by exactly the same
proof.}
We deduce that
$\Phi_n\!: C^r(X,E)\to C(T^nX,F)_{c.o.}$,
$\gamma\mto d^n(f_*\gamma)$
is a continuous mapping.
The topology on
$C^r(X,F)$ being the initial topology with respect
to the mappings $g\mto d^ng\in C(T^nX,F)_{c.o.}$,
where $n\in |r]$,
we infer from the preceding that $f_*$ is continuous.\vspace{2.5 mm}\\
{\em Induction Step.\/}
Suppose that $f_*$ is of class $C^j$ (where $0\leq j<k$)
and $d^{j}(f_*)=(d^j_2f)_*$.
We have to show that $d^j(f_*)$ is of class $C^1$,
with $d(d^jf_*)=(d^{j+1}_2f)_*$.
To this end, let
$\gamma,\eta \in C^r(X, E^{2^j})$
be given.
Then
$G\!:
\R \times X\to F$,
$G(s,x):=d_2^{j+1}\!f\,(x,\gamma(x)+s\eta(x),\eta(x))$
is a mapping of class~$C^r$, with $G(0,\sbull)=(d_2^{j+1}f)_*(\gamma,\eta)$,
and
\[
H(h,x):=\frac{1}{h}[d^j_2f(x,(\gamma+h\eta)(x))-d^j_2f(x,\gamma(x))]=
\int_0^1 G(th,x)\, dt\]
for all $x\in X$ and $0\not=h\in \R$
by the Fundamental Theorem of Calculus
(\cite{RES}, Theorem~1.5).
For any $n\in |r]$, $y\in T^nX$, and $0\not=h\in \R$,
we obtain
\[
d^n_2H(h,y)=
\int_0^1 d^n_2G(th,y)\, dt,\]
where $d^n_2G\!:\; \R \times\, T^nX\to F$
is a mapping of class $C^{r-n}$ and thus continuous.
Now suppose that~$C$ is a compact subset of $T^nX$.
We deduce from the uniform continuity of
the mapping $[0,1]\times [-1,1 ]\times C\to F$,
$(t,h,y)\mto d^n_2G(th,y)$ that
\[
\lim_{h\to 0}
d_2^nH(h,y)= 
\lim_{h\to 0}\int_0^1 d^n_2G(th,y)\, dt
= d^n_2G(0,y)
= d^n\!\!\left((d_2^{j+1}f)_*(\gamma,\eta)\right)(y)
\]
uniformly for~$y\in C$.
Thus
$
\lim_{h\to 0}\frac{1}{h}[(d^j_2f)_*(\gamma+h\eta)-(d^j_2f)_*(\gamma)]
=(d_2^{j+1}f)_*(\gamma,\eta)$
in $C^{\,r}(X,F)$.
We have shown that the directional
derivative $d(d^jf_*)(\gamma,\eta)$
exists, and is given by $(d^{j+1}_2f)_*(\gamma,\eta)$.
Replacing~$f$ by
$d^{j+1}_2f$ (which satisfies analogous
hypotheses, $k$ being replaced by $k-j-1$),
the case $j=0$ of the present induction
shows that $d^{j+1}(f_*)=d(d^jf_*)=(d_2^{j+1}f)_*$ is continuous.
This completes the proof.
\end{proof}
\begin{cor}\label{functorial1}
If
$f\!: E\to F$ is a smooth
mapping between locally convex spaces,
then also the mapping
\[
C^r(X,f)\!:
C^r(X,E)\to C^r(X,F),\;\;\;\;\;
\gamma\mto f\circ \gamma
\]
is smooth.
\end{cor}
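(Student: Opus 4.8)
The plan is to deduce this from Proposition~\ref{pushforw1} by viewing~$f$ as a function on $X\times E$ that does not depend on its first argument. Set $\tilde f:=f\circ\pr_2\colon X\times E\to F$, $(x,e)\mapsto f(e)$, where $\pr_2\colon X\times E\to E$ is the projection. Then $\tilde f\circ(\id_X,\gamma)=f\circ\gamma$ for every $\gamma\in C^r(X,E)$, so that $\tilde f_*=C^r(X,f)$ as maps $C^r(X,E)\to C^r(X,F)$; it therefore suffices to check that $\tilde f$ satisfies the hypotheses of Proposition~\ref{pushforw1} with $k:=\infty$, and then to invoke that proposition.

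First I would verify that $\tilde f$ is partially $C^\infty$ in the second argument. For each $x\in X$ the partial map $\tilde f(x,\sbull)$ equals~$f$, which is smooth; and for every $j\in\N_0$ the function $d_2^j\tilde f\colon X\times E^{2^j}\to F$ is given by $(x,v)\mapsto d^jf(v)$, i.e.\ $d_2^j\tilde f=d^jf\circ\pi_j$, where $\pi_j\colon X\times E^{2^j}\to E^{2^j}$ is the projection. In particular $d_2^j\tilde f$ is independent of~$x$, hence continuous; this gives hypothesis~(a) of Proposition~\ref{pushforw1}.

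For hypothesis~(b) I would use that, since~$f$ is smooth, each iterated differential $d^jf\colon E^{2^j}\to F$ is again smooth (this follows from the recursive definition of $C^\infty$ in Section~\ref{sec1}; see~\cite{RES}). As~$\pi_j$ is continuous linear, hence smooth, and compositions of smooth maps are smooth (\cite{RES}, Proposition~1.15), the map $d_2^j\tilde f=d^jf\circ\pi_j$ is smooth, in particular of class~$C^r$, for each $j\in\N_0=|k]$. Proposition~\ref{pushforw1}, applied with $k=\infty$, then shows that $\tilde f_*=C^r(X,f)$ is of class~$C^j$ for every~$j$, i.e.\ smooth.

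No serious difficulty arises here: the argument is just a specialization of Proposition~\ref{pushforw1}, the only mildly delicate points being the identification $d_2^j\tilde f=d^jf\circ\pi_j$ and the observation that smoothness of~$f$ passes to all of its iterated differentials. Observe also that no finite-dimensionality hypothesis on~$X$ is needed, since Proposition~\ref{pushforw1} is valid for an arbitrary (possibly infinite-dimensional) $C^r$-manifold~$X$ (resp.\ an arbitrary Hausdorff space~$X$ when $r=0$).
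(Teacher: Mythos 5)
Your proof is correct and is essentially the paper's own argument: the paper likewise sets $g(x,y):=f(y)$, notes that $d_2^ng(x,y)=d^nf(y)$ is partially $C^\infty$ in the second argument with $d_2^ng$ smooth, and invokes Proposition~\ref{pushforw1} with $k=\infty$. Your extra remarks (the identification $d_2^j\tilde f=d^jf\circ\pi_j$ and that no finite-dimensionality of $X$ is needed) are accurate but only spell out what the paper leaves implicit.
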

\begin{proof}
We set $g(x,y):= f(y)$ for
$(x,y)\in X\times E$.
Then $g$ is partially $C^\infty$ in the second
argument, with $d_2^ng(x,y)=d^nf(y)$
for $x\in X$, $y\in T^nE= E^{2^n}$,
which is a smooth function of~$(x,y)$.
By Proposition~\ref{pushforw1},
$C^r(X,f)=g_*$
is smooth.
\end{proof}
\begin{cor}\label{contmlt}
Let $E$ be a locally convex $\K$-vector space
and $f\!:X\to \K$ be a $C^r$-map.
Then
\[
m_f\!: C^r(X,E)\to C^r(X,E),\;\;\;\;
\gamma\mto f\cdot \gamma
\]
$($pointwise product$)$
is a continuous $\K$-linear map.
\end{cor}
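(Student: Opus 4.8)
The plan is to realise $m_f$ as a pushforward and invoke Proposition~\ref{pushforw1}. The $\K$-linearity of $m_f$ is immediate from the pointwise definition of the product (for $\gamma_1,\gamma_2\in C^r(X,E)$ and $\lambda\in\K$ one has $f\cdot(\gamma_1+\lambda\gamma_2)=f\cdot\gamma_1+\lambda\,(f\cdot\gamma_2)$ in $C^r(X,E)$), so the only issue is continuity; I would in fact prove the stronger statement that $m_f$ is smooth, continuity then being automatic. To this end, let $\mu\!:\K\times E\to E$, $(\lambda,v)\mto\lambda v$ be scalar multiplication — a continuous bilinear, hence smooth, map — and set $g\!:X\times E\to E$, $g(x,v):=f(x)\cdot v=\mu(f(x),v)$. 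Then $g\circ(\id_X,\gamma)=f\cdot\gamma$, so $m_f=g_*$, and it suffices to check that $g$ satisfies hypotheses (a) and (b) of Proposition~\ref{pushforw1} (with $F:=E$ and $k:=\infty$).

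For fixed $x\in X$ the map $g(x,\sbull)\!:E\to E$ is continuous and $\K$-linear, hence smooth. By an obvious induction — each iterated differential $d^j L$ of a continuous linear self-map $L$ of $E$ being itself linear — one gets $d^nL(x_1,\dots,x_{2^n})=L(x_{2^n})$, so that $d_2^0 g\,(x,v)=f(x)\cdot v$ and $d_2^n g\,(x,v,h_1,\dots,h_{2^n-1})=f(x)\cdot h_{2^n-1}$ for $n\geq1$. In particular $d_2^n g$ is a well-defined map $X\times E^{2^n}\to E$ for every $n\in\N_0$, and it factors as $\mu\circ(f\circ\pr_X,\pr)$ for a suitable coordinate projection $\pr\!:X\times E^{2^n}\to E$; since $f$ is of class $C^r$ and $\mu$ together with the projections are smooth, the Chain Rule shows each $d_2^n g$ is of class $C^r$, which in turn yields that $g$ is partially $C^\infty$ in the second argument. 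Proposition~\ref{pushforw1} then gives that $m_f=g_*\!:C^r(X,E)\to C^r(X,E)$ is smooth, in particular continuous.

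I do not expect any serious obstacle here: the content is entirely reduced to the fact that $g$ is \emph{linear in its second variable}, which makes the computation of the derivatives $d_2^n g$ trivial and their $C^r$-dependence on $x$ a direct consequence of $f\in C^r(X,\K)$ and the smoothness of scalar multiplication. The only point demanding mild care is the bookkeeping with the iterated-derivative convention $d^n(U\times E^{2^n-1}\to F)$, i.e. keeping track of which coordinate the linear differential $d^n(g(x,\sbull))$ picks out; this is settled once and for all by the displayed formula above. (Alternatively, one could avoid Proposition~\ref{pushforw1} altogether: writing $m_f=C^r(X,\mu)\circ j$, where $j\!:C^r(X,E)\to C^r(X,\K\times E)\isom C^r(X,\K)\times C^r(X,E)$, $\gamma\mto(f,\gamma)$, is a continuous affine map and $C^r(X,\mu)$ is smooth by Corollary~\ref{functorial1}, again exhibits $m_f$ as smooth.)
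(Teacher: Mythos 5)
Your proposal is correct and follows essentially the same route as the paper: it realises $m_f$ as the pushforward $g_*$ of $g(x,v)=f(x)\cdot v$ and invokes Proposition~\ref{pushforw1}. The only difference is that you verify the hypotheses with $k=\infty$ (computing all $d_2^n g$ to get smoothness), whereas the paper applies the proposition more economically with $k=0$, merely noting that $g=s\circ(f\times\id_E)$ is of class $C^r$ to conclude continuity — which for the linear map $m_f$ is equivalent anyway.
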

\begin{proof}
The mapping $g\!: X\times E\to E$,
$g(x,v):=f(x)\cdot v$
is a composition $g=s\circ (f\times \id_E)$,
where scalar multiplication $s\!:\K\times E\to E$
is smooth being a continuous bilinear map,
$f$ is of class~$C^r$, and $\id_E$ is smooth.
Thus $g$ is of class~$C^r$.
By Proposition~\ref{pushforw1},
$m_f=g_*$ is continuous.
\end{proof}
\section{Vector bundles and spaces of sections}\label{sec3}
In this section, we define
vector bundles (of locally convex spaces)
and provide some background material
concerning spaces of sections
and their topologies.
\begin{defn}\label{defnbdle}
Let $X$ be a $C^r$- (resp., $\K$-analytic) manifold,
modelled over a locally convex $\K$-vector space~$Z$,
and $F$ be a locally convex $\bL$-vector space,
where $\bL\in\{\R,\C\}$
and $\K\in \{\R,\bL\}$.
A $C^r$-\,(resp., $\K$-analytic)
{\em vector bundle\/} over $X$, with typical fibre~$F$,
is a $C^r$-\,(resp., $\K$-analytic) manifold~$E$,
together with a $C^r$-\,(resp., $\K$-analytic)
surjection $\pi\!: E\to X$ and equipped with an
$\bL$-vector space structure on each fibre
$E_x:=\pi^{-1}(\{x\})$,
such that for each $x_0\in X$ there exists
an open neighbourhood $X_\psi$ of~$x_0$ in~$X$
and a $C^r$-\,(resp., $\K$-analytic)
diffeomorphism
\[
\psi\!: \pi^{-1}(X_\psi)\to X_\psi\times F\]
(called a ``local trivialization of $E$ about $x_0$'')
such that
$\psi(E_x)=\{x\}\times F$ for each $x\in X_\psi$
and $\pr_F\circ \psi|_{E_x}\!: E_x\to
F$ is $\bL$-linear
(and thus an isomorphism of topological vector spaces
with respect to the topology on $E_x$ induced by~$E$).
\end{defn}
\begin{rem}\label{chnge}
In the preceding situation,
given two local trivializations
$\psi\!: \pi^{-1}(X_\psi)\to X_\psi\times F$
and $\phi\!: \pi^{-1}(X_\phi)\to X_\phi\times F$,
we have $\phi(\psi^{-1}(x,v))=(x, g_{\phi,\psi}(x).v)$
for some function $g_{\phi,\psi}\!: X_\phi\cap X_\psi\to
\GL(F)\sub L(F,F)_b$ (the space of continuous
linear self-maps, equipped with the topology of
uniform convergence on bounded subsets of~$F$).
Then $G_{\phi,\psi}\!:(X_\phi\cap X_\psi)\times F\to F$,
$(x,v)\mto g_{\phi,\psi}(x).v$
is a $C^r$- (resp., $\K$-analytic) mapping (since $\phi\circ \psi^{-1}$
is so),
%
%
and it can be shown that $g_{\phi,\psi}$
is a mapping of class $C^{r-2}$ (resp., $\K$-analytic)
automatically~\cite{HYP}. In the $C^r$-case, when $r<\infty$,
the reader might wish to include as an additional axiom
in the definition of a vector bundle that
the mappings $g_{\phi,\psi}$ be of class $C^r$
(although we shall not do so here).
All of our arguments remain valid
in this more restrictive setting.
\end{rem}
In the present context, we are only interested in
$C^r$-vector bundles and their sections.
\begin{defn}
A {\em $C^r$-section\/} of a $C^r$-vector bundle
$\pi\!: E\to X$ is a $C^r$-mapping $\sigma\!:
X\to E$ such that $\pi\circ \sigma=\id_X$.
Its {\em support\/} $\Supp(\sigma)$
is the closure of $\{x\in X\!:
\sigma(x)\not=0_x \}$.
We let $C^r(X,E)$, $C^r_K(X,E)$, resp.,
$\cD^r(X,E)$ be the sets of all
$C^r$-sections of $E$, the set of all $C^r$-sections with support
contained
in a given compact subset $K\sub X$,
resp., the set of all compactly supported $C^r$-sections.
We abbreviate $\cD(X,E):=\cD^\infty(X,E)$.
\end{defn}
Making use of scalar multiplication
and addition in the individual fibres,
we obtain natural vector space structures
on $C^r(X,E)$, $C^r_K(X,E)$, and $\cD^r(X,E)$.
The zero-element is
the {\em zero-section\/} $0_\sbull\!:
X\to E$, $x\mto 0_x\in E_x$.
\begin{rem}\label{notinterest}
Note that when $C^r_K(X,E)$ or $\cD^r(X,E)$
is a non-trivial vector space,
then $X$ has to be finite-dimensional.
\end{rem}
\begin{defn}
If $\pi\!: E\to X$ is a $C^r$-vector bundle
with typical fibre~$F$,
$\sigma\!:X\to E$ a $C^r$-section,
and $\psi\!:\pi^{-1}(X_\psi)\to X_\psi\times F$ a local
trivialization, we define
$\sigma_\psi:=\pr_F\circ \psi\circ \sigma|_{X_\psi}\!: X_\psi\to F$.
Thus $\psi(\sigma(x))=(x,\sigma_\psi(x))$
for all $x\in X_\psi$.
\end{defn}
Note that $\sigma_\psi$ is a mapping of class~$C^r$ here.
The symbols $g_{\phi,\psi}$,
$G_{\phi,\psi}$, and $\sigma_\psi$
will always be used with the meanings just described,
without further explanation.
\begin{defn}
If $\pi\!: E\to X$ is a vector bundle
and $\cA$ a set of local trivializations $\psi$ of~$E$
whose domains cover~$E$,
then we call $\cA$ an {\em atlas\/} of local trivializations.
\end{defn}
\begin{la}\label{famgivessec}
If $\pi\!: E\to X$ is a $C^r$-vector bundle
with typical fibre~$F$, and $\cA$ an atlas
of local trivializations for~$E$,
then
\[
\Gamma\!: C^r(X,E)\to \prod_{\psi\in\cA} C^r(X_\psi,F),\;\;\;\;
\sigma\mto (\sigma_\psi)_{\psi\in\cA}
\]
is an injection, whose image is the closed vector subspace
$H:=\{(f_\psi)\in \prod C^r(X_\psi,F)\!: \;(\forall \phi,\psi\in\cA,\;
\forall x\in X_\phi\cap X_\psi)\;
f_\phi(x)=g_{\phi, \psi}(x).f_\psi(x)\,\}$
of $\, \prod_{\psi\in\cA} C^r(X_\psi,F)$.
\end{la}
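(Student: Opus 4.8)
The plan is to prove the three assertions in order: $\Gamma$ is injective, its image lies in $H$, and every element of $H$ comes from a section.

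Injectivity is immediate: if $\sigma_\psi = 0$ for all $\psi \in \cA$, then since the domains $X_\psi$ cover $X$ and each $\psi$ is a diffeomorphism with $\psi(\sigma(x)) = (x, \sigma_\psi(x))$, we get $\sigma(x) = \psi^{-1}(x,0) = 0_x$ for every $x$, so $\sigma = 0_\sbull$. The inclusion $\im(\Gamma) \sub H$ is a direct computation from Remark~\ref{chnge}: for $\sigma \in C^r(X,E)$ and $\phi,\psi \in \cA$ with $x \in X_\phi \cap X_\psi$, we have $\phi(\sigma(x)) = \phi(\psi^{-1}(x,\sigma_\psi(x))) = (x, g_{\phi,\psi}(x).\sigma_\psi(x))$, and comparing with $\phi(\sigma(x)) = (x,\sigma_\phi(x))$ gives the compatibility condition $\sigma_\phi(x) = g_{\phi,\psi}(x).\sigma_\psi(x)$. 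For the reverse inclusion, given $(f_\psi)_{\psi \in \cA} \in H$, I would define $\sigma(x) := \psi^{-1}(x, f_\psi(x))$ whenever $x \in X_\psi$; the compatibility condition guarantees this is well-defined independently of the choice of $\psi$, since $\psi^{-1}(x, f_\psi(x)) = \psi^{-1}(\psi(\phi^{-1}(x, f_\phi(x)))) = \phi^{-1}(x, f_\phi(x))$ using $f_\psi(x) = g_{\psi,\phi}(x).f_\phi(x)$. This $\sigma$ is a $C^r$-section because it agrees with the $C^r$-map $\psi^{-1} \circ (\id_{X_\psi}, f_\psi)$ on each $X_\psi$, and being of class $C^r$ is a local condition; moreover $\pi \circ \sigma = \id_X$ since $\psi(E_x) = \{x\} \times F$. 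Finally $\sigma_\psi = f_\psi$ by construction, so $\Gamma(\sigma) = (f_\psi)_{\psi\in\cA}$, proving $H \sub \im(\Gamma)$.

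It remains to check that $H$ is a closed vector subspace of $\prod_{\psi\in\cA} C^r(X_\psi,F)$. It is clearly a linear subspace, since the defining conditions are linear in $(f_\psi)$. For closedness, fix $\phi,\psi \in \cA$ and $x \in X_\phi \cap X_\psi$; the map $(f_\psi)_{\psi} \mapsto f_\phi(x) - g_{\phi,\psi}(x).f_\psi(x)$ is continuous from the product into $F$ (it factors through the coordinate projections followed by point evaluations, which are continuous on $C^r(X_\phi,F)$ and $C^r(X_\psi,F)$ by Definition~\ref{defntopfs}, composed with the continuous linear map $g_{\phi,\psi}(x) \in L(F,F)_b \subseteq L(F,F)$ acting on the second coordinate), so its kernel is closed, and $H$ is the intersection of these kernels over all such triples $(\phi,\psi,x)$.

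The main obstacle is the verification that the evaluation maps used in establishing closedness are genuinely continuous into $F$: point evaluation $C^r(X_\psi, F) \to F$, $f \mapsto f(x)$ is continuous because it is the composition of the topological embedding from Definition~\ref{defntopfs} (the $n=0$ component lands in $C(X_\psi,F)_c$) with evaluation at $x$, which is continuous for the compact-open topology. Apart from this, the argument is routine; the only real content is the gluing construction showing $H \sub \im(\Gamma)$, which relies on nothing deeper than the cocycle-type identity for the transition functions already recorded in Remark~\ref{chnge}.
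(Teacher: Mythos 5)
Your proof is correct, and the core of it (injectivity, the computation showing $\im\,\Gamma\sub H$, and the gluing construction producing a $C^r$-section from a compatible family) is essentially identical to the paper's argument. The one place where you take a genuinely different route is the closedness of $H$: the paper writes $H$ as the set where, for each pair $\phi,\psi\in\cA$, two continuous maps into $C^r(X_\phi\cap X_\psi,F)$ agree, namely the restriction $f_\phi\mto f_\phi|_{X_\phi\cap X_\psi}$ and the composition of restriction with the pushforward $(G_{\phi,\psi})_*$, whose continuity is supplied by Proposition~\ref{pushforw1}; you instead cut $H$ out pointwise, as the intersection of kernels of the continuous linear maps $(f_\psi)_\psi\mto f_\phi(x)-g_{\phi,\psi}(x).f_\psi(x)$, using only that point evaluations are continuous on $C^r(X_\psi,F)$ (via the $n=0$ component of the embedding into the spaces $C(T^nX_\psi,F)_c$) and that each $g_{\phi,\psi}(x)$ is a continuous linear operator on~$F$. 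Your version is more elementary in that it does not invoke the pushforward machinery at all and only uses the compact-open part of the topology; the paper's version is slightly slicker to state (two continuous maps agreeing on a closed set) and fits its habit of routing everything through $(G_{\phi,\psi})_*$, which is in any case needed again immediately afterwards (e.g.\ in Lemma~\ref{anyatlas}). Both arguments implicitly use that $F$ is Hausdorff, so neither has an advantage there.
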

\begin{proof}
It is obvious that $\im\,\Gamma\sub H$,
and clearly~$\Gamma$ is injective.
If now $(f_\psi)_{\psi\in\cA}\in H$,
we define $\sigma\!: X\to E$ via
$\sigma(x):=\psi^{-1}(x,f_\psi(x))$ if $x\in X_\psi$.
By definition of~$H$, $\sigma(x)$ is independent of the choice of~$\psi$.
As $\psi\circ \sigma|_{X_\psi}=(\id_{X_\psi},f_\psi)$,
the mapping $\sigma\!: X\to E$ is of class~$C^r$.
Thus $\sigma$ is a $C^r$-section,
and $\Gamma(\sigma)=(f_\psi)_{\psi\in\cA}$
by definition of~$\sigma$.
We deduce that $\im\,\Gamma=H$.
The closedness of~$H$ follows
from the continuity of the restriction mapping
$C^r(X_\psi,F)\to C^r(X_\phi\cap X_\psi,F)$
and the continuity of $(G_{\phi,\psi})_*\!:
C^r(X_\phi\cap X_\psi,F)\to C^r(X_\phi\cap X_\psi,F)$
(Theorem~\ref{pushforw1}),
for all $\phi,\psi\in\cA$.
\end{proof}
\begin{defn}\label{dfntops}
Let $\pi\!:E\to X$ be a $C^r$-vector bundle, with typical fibre~$F$,
and $\cA$ be the atlas of all local trivializations
of~$E$.
We give $C^r(X,E)$ the locally convex vector topology making the
linear mapping
\[
\Gamma\!: C^r(X,E)\to\prod_{\psi\in\cA}C^r(X_\psi, F),\;\;\;
\sigma\mto (\sigma_\psi)_{\psi\in\cA}\]
a topological embedding.
We give $C^r_K(X,E)$ the topology induced by $C^r(X,E)$,
for every compact subset $K\sub X$.
If~$X$ is finite-dimensional and
$\sigma$-compact, we
equip $\cD^r(X,E)=\bigcup_K C^r_K(X,E)$ with
the vector topology making
it the direct limit of its subspaces $C^r_K(X,E)$ in the
category of locally convex spaces.
It induces the original topology
on each $C^{\,r}_K(X,E)$
(cf.\ Remark~\ref{indu}).
\end{defn}
By the preceding definition,
the topology on $C^r(X,E)$ is initial
with respect to the family
$(\theta_\psi)_{\psi\in\cA}$,
where $\theta_\psi\!: C^r(X,E)\to C^r(X_\psi,F)$,
$\sigma\mto \sigma_\psi$.\vspace{2 mm}\\
Two simple lemmas will be useful
later:
\begin{la}\label{anyatlas}
The topology on $C^r(X,E)$ described in Definition~{\n \ref{dfntops}\/}
is initial with respect to $(\theta_\psi)_{\psi\in \cB}$,
for any atlas $\cB\sub \cA$ of local trivializations
for~$E$.
\end{la}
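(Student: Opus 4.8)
The plan is to show that the two topologies on $C^r(X,E)$ — the one from the full atlas $\cA$ and the one from a sub-atlas $\cB$ — coincide, by exhibiting continuous linear maps in both directions (equivalently, by a standard initial-topology argument). First I would observe that since $\cB\sub\cA$, the identity map from the $\cA$-topology to the $\cB$-topology is continuous: each $\theta_\psi$ for $\psi\in\cB$ is among the maps defining the $\cA$-topology, so they are all continuous on $C^r(X,E)$ with the $\cA$-topology, and hence by the universal property of the initial topology the identity is continuous in that direction. The real content is the reverse inequality: each $\theta_\psi$ with $\psi\in\cA$ must be continuous for the $\cB$-topology.

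So the main step is: given an arbitrary local trivialization $\psi\!:\pi^{-1}(X_\psi)\to X_\psi\times F$ in $\cA$, express $\theta_\psi$ in terms of the maps $\theta_\phi$, $\phi\in\cB$, together with operations already known to be continuous. The key point is the transition formula $\sigma_\psi(x)=g_{\psi,\phi}(x).\sigma_\phi(x)$ on $X_\phi\cap X_\psi$, i.e.\ $\sigma_\psi|_{X_\phi\cap X_\psi}=(G_{\psi,\phi})_*(\sigma_\phi|_{X_\phi\cap X_\psi})$. Since $\cB$ is an atlas, the domains $X_\phi$, $\phi\in\cB$, cover $X$ and in particular cover the open set $X_\psi$. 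By Remark~\ref{trivialI} (applied to the manifold $X_\psi$ and its open cover $\{X_\phi\cap X_\psi:\phi\in\cB\}$), the topology on $C^r(X_\psi,F)$ is initial with respect to the restriction maps $\rho_{X_\phi\cap X_\psi}\!:C^r(X_\psi,F)\to C^r(X_\phi\cap X_\psi,F)$. Therefore, to show $\theta_\psi$ is continuous for the $\cB$-topology, it suffices to show that $\rho_{X_\phi\cap X_\psi}\circ\theta_\psi$ is continuous for each $\phi\in\cB$. But that composite equals $(G_{\psi,\phi})_*\circ\rho_{X_\phi\cap X_\psi}\circ\theta_\phi$: namely, first apply $\theta_\phi$ (continuous for the $\cB$-topology by definition), then restrict to $C^r(X_\phi\cap X_\psi,F)$ (continuous, as restriction maps are), then push forward along $G_{\psi,\phi}$, which is continuous by Proposition~\ref{pushforw1} since $G_{\psi,\phi}$ is a $C^r$-map (being induced by the $C^r$-diffeomorphism $\psi\circ\phi^{-1}$, as in Remark~\ref{chnge}), hence satisfies hypotheses (a),(b) of that proposition with $k=0$. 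Chaining these continuous maps gives continuity of $\rho_{X_\phi\cap X_\psi}\circ\theta_\psi$, and hence of $\theta_\psi$.

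Having shown every $\theta_\psi$ ($\psi\in\cA$) is continuous for the $\cB$-topology, the universal property of the initial topology gives that the identity from the $\cB$-topology to the $\cA$-topology is continuous; combined with the first paragraph, the two topologies agree, so the $\cA$-topology is initial with respect to $(\theta_\psi)_{\psi\in\cB}$, as claimed. I expect the only mildly delicate point to be the bookkeeping in Remark~\ref{trivialI}'s reduction — making sure the cover $\{X_\phi\cap X_\psi:\phi\in\cB\}$ of $X_\psi$ legitimately triggers the initial-topology statement there — but this is routine; the substantive ingredients (continuity of restriction maps, continuity of pushforwards by $C^r$-maps via Proposition~\ref{pushforw1}, and the transition cocycle identity) are all already in place.
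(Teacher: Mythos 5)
Your proposal is correct and follows essentially the same route as the paper's proof: both reduce, via Remark~\ref{trivialI}, to checking continuity of $\sigma\mapsto\sigma_\psi|_{X_\phi\cap X_\psi}$ for charts $\phi$ in the sub-atlas, and both write this map as the composition of $\theta_\phi$, a restriction map, and the pushforward $(G_{\psi,\phi})_*$, whose continuity comes from Proposition~\ref{pushforw1}. The only differences are notational (your $\psi\in\cA$, $\phi\in\cB$ versus the paper's reverse labelling) and your slightly more explicit phrasing of the two-way comparison of topologies.
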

\begin{proof}
We let $\cO$ be the initial topology
on $C^r(X,E)$ with respect to $(\theta_\psi)_{\psi\in \cB}$,
which apparently is coarser than initial topology
with respect to $(\theta_\psi)_{\psi\in\cA}$.
Fix a local trivialization
$\phi\in \cA$.
Then $\{X_\phi\cap X_\psi\!:\psi\in \cB\}$
is an open cover for $X_\phi$.
In view of Remark~\ref{trivialI}, $\theta_\phi$
will be continuous on $(C^r(X,E),\cO)$ if we can show
that $\sigma\mto \theta_\phi(\sigma)|_{X_\phi\cap X_\psi}$
is continuous for all $\psi\in \cB$.
But, with $G_{\phi,\psi}$
as in Remark~\ref{chnge}, the latter mapping is the
composition of $(G_{\phi,\psi})_*\!: C^r(X_\phi\cap X_\psi,F)\to
C^r(X_\phi\cap X_\psi,F)$ (which is continuous
by Theorem~\ref{pushforw1})
and $C^r(X,E)\to C^r(X_\phi\cap X_\psi,F)$,
$\sigma\mto \theta_\psi(\sigma)|_{X_\phi\cap X_\psi}$,
which is continuous by Remark~\ref{trivialI}.
Thus, $\theta_\phi$
being continuous on $(C^r(X,E),\cO)$
for each $\phi\in\cA$,
the topology~$\cO$ is finer than the initial
topology with respect to the family $(\theta_\phi)_{\phi\in \cA}$,
which completes the proof.
\end{proof}
If $\pi\!: E\to X$ is a $C^r$-vector bundle
and~$U$ an open subset of~$X$, then $\pi|_{E_U}^U\!: E|_U\to U$
makes the open submanifold $E|_U:=\pi^{-1}(U)$ of~$E$ a $C^r$-vector bundle
over the base~$U$.
\begin{la}\label{restrcts}
In the situation of Definition~{\n \ref{dfntops}},
let~$U$ be an open subset of~$X$,
and $K$ be a compact subset of~$U$.
Then the restriction map
$C^r(X,E)\to C^r(U,E|_U)$,
$\sigma\mto \sigma|_U$ is continuous,
and
the corresponding restriction map
$\rho_U\!: C^r_K(X,E)\to C^r_K(U,E|_U)$ is
an isomorphism of topological vector spaces.
\end{la}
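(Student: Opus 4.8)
The plan is to use the local trivialization atlas of $E$ together with Remark~\ref{trivialI} and Lemma~\ref{anyatlas} to reduce everything to the corresponding (already known) statements for spaces of vector-valued functions. Let me write out the two assertions separately.

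\medskip

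First I would handle the restriction map $C^r(X,E)\to C^r(U,E|_U)$, $\sigma\mto\sigma|_U$. Pick the atlas $\cA$ of all local trivializations of $E$, and note that $\cB:=\{\psi\in\cA\!: X_\psi\sub U\}$ is an atlas of local trivializations for $E|_U$ (every point of $U$ has a trivialization-neighbourhood inside $U$ by shrinking). By Lemma~\ref{anyatlas} the topology on $C^r(U,E|_U)$ is initial with respect to the maps $\theta_\psi\!: C^r(U,E|_U)\to C^r(X_\psi,F)$, $\tau\mto\tau_\psi$, for $\psi\in\cB$. So it suffices to check that $\sigma\mto(\sigma|_U)_\psi$ is continuous $C^r(X,E)\to C^r(X_\psi,F)$ for each such $\psi$. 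But $(\sigma|_U)_\psi=\sigma_\psi$ since $X_\psi\sub U$, and $\sigma\mto\sigma_\psi$ is just $\theta_\psi\!:C^r(X,E)\to C^r(X_\psi,F)$, which is continuous by Definition~\ref{dfntops}. This gives continuity of the restriction map.

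\medskip

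Next, for the second assertion, restricting the first map to the closed subspace $C^r_K(X,E)$ gives a continuous linear map $\rho_U\!: C^r_K(X,E)\to C^r(U,E|_U)$. Its image lies in $C^r_K(U,E|_U)$: if $\Supp(\sigma)\sub K\sub U$ then $\sigma|_U$ vanishes outside $K$ as well, and conversely $\sigma|_U\in C^r_K(U,E|_U)$ determines a unique $C^r$-section of $E$ over $X$ that is zero off $K$ (extend by the zero-section; this is $C^r$ because it is $C^r$ on the open set $U$ and $C^r$ on the open set $X\take K$, and these two open sets cover $X$). Thus $\rho_U$ is a linear bijection onto $C^r_K(U,E|_U)$; it remains to see that the inverse is continuous, equivalently that $\rho_U$ is a topological embedding. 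For this I would use Remark~\ref{trivialI}: the topology on $C^r(X,E)$ — hence the subspace topology on $C^r_K(X,E)$ — is initial with respect to the restriction maps to any open cover of $X$, in particular the cover $\{U,\,X\take K\}$. On $C^r_K(X,E)$ the restriction to $X\take K$ is identically zero (a constant map to a point), so it contributes nothing, and the subspace topology on $C^r_K(X,E)$ coincides with the initial topology with respect to the single map $\sigma\mto\sigma|_U\in C^r(U,E|_U)$. That is exactly the statement that $\rho_U$ is a topological embedding, and since it is also a bijection onto $C^r_K(U,E|_U)$, it is an isomorphism of topological vector spaces.

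\medskip

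The main point requiring a little care is the claim that the subspace topology on $C^r_K(X,E)$ is already initial with respect to the single restriction-to-$U$ map; this rests on applying Remark~\ref{trivialI} to the open cover $\{U,\,X\take K\}$ of $X$ and observing that the $(X\take K)$-component is trivial on sections supported in $K$. The analogous bundle-valued version of Remark~\ref{trivialI} is immediate from the function-valued one via the embedding $\Gamma$ of Lemma~\ref{famgivessec}, or one can simply invoke that the defining maps $\theta_\psi$ factor through restrictions to trivializing neighbourhoods, which can be chosen subordinate to $\{U,\,X\take K\}$. No genuine obstacle arises; everything is a bookkeeping reduction to Definition~\ref{dfntops}, Remark~\ref{trivialI}, and Lemma~\ref{anyatlas}.
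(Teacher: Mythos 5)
Your proof is correct and follows essentially the same route as the paper: the paper also covers $X$ by trivializations subordinate to $\{U,\,X\take K\}$ (taking atlases $\cA_0$ for $E|_{X\take K}$ and $\cA_1$ for $E|_U$ and invoking Lemma~\ref{anyatlas}), notes that the maps coming from the $X\take K$ part vanish on sections supported in $K$, and identifies the remaining initial topology with that of $C^r_K(U,E|_U)$. Your repackaging via a bundle-valued version of Remark~\ref{trivialI} for the cover $\{U,\,X\take K\}$ reduces to exactly this subordination argument, so it is only a cosmetic difference.
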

\begin{proof}
The first assertion is obvious.
Thus $\rho_U$ is continuous as well,
and apparently it is a linear bijection.
To see that $\rho_U$ is an isomorphism
of topological vector spaces,
we let $\cA_0$ be an atlas of local trivializations
for $E|_{X\,\take\, K}$,
and $\cA_1$ be an atlas for $E|_U$.
Then the topology
on $C^r_K(U,E|_U)$ is initial
with respect to the family of mappings
$\theta_\psi^U\!:C^r_K(U,E|_U)\to C^r(X_\psi,F)$,
$\sigma\mto \sigma_\psi$, for $\psi\in \cA_1$.
Furthermore, by Lemma~\ref{anyatlas},
the topology on $C^r_K(X,E)$ is
initial with respect to
family of mappings $\theta^X_\psi\!:
C^r_K(X,E)\to C^r(X_\psi,F)$
for $\psi\in \cA_0\cup\cA_1$
(defined analogously).
As $\theta^Y_\psi\circ \rho_U=\theta^X_\psi$
for all $\psi\in \cA_1$ whereas $\theta^X_\psi=0$
for all $\psi\in \cA_0$,
the assertion easily follows.
\end{proof}
If $\pi\!: E\to X$ is a $C^r$-vector bundle
over a finite-dimensional base~$X$,
and $K$ a compact subset of~$X$,
then there exists a $\sigma$-compact, open neighbourhood
$U$ of $K$ in~$X$. By the preceding lemma
we have $C^r_K(X,E)\isom C^r_K(U,E|_U)$.
Therefore, no generality will be lost
by formulating all results concerning $C^r_K(X,E)$
for $\sigma$-compact~$X$.
\begin{numba}\label{sumsvble}
If $\pi_j\!: E_j\to X$ are $C^r$-vector bundles
with fibre $F_j$
for $j\in\{1,2\}$,
over the same base~$X$, then $E_1\oplus E_2:=
\bigcup_{x\in X} (E_1)_x\times (E_2)_x$
is a $C^r$-vector bundle over $X$ in a natural way,
with projection $\pi\!: E_1\oplus E_2\to X$, $v\mto x$
if $v\in (E_1)_x\times
(E_2)_x $. Let $\phi_j\!: \pi_j^{-1}(X_{\phi_j})\to X_{\phi_j}\times F_j$
be local trivialization of~$E_j$ for $j\in \{1,2\}$,
and
$X_{\phi_1\oplus \phi_2}:=X_{\phi_1}\cap X_{\phi_2}$.
Then
\[
\phi_1\oplus \phi_2\!: \pi^{-1}(X_{\phi_1\oplus\phi_2}) \to
X_{\phi_1\oplus\phi_2}\times (F_1\times F_2),
\]
$(\phi_1\oplus\phi_2)(v,w) :=(x, \pr_{F_1}(\phi_1(v)),
\pr_{F_2}(\phi_2(w)))$
for $(v,w)\in (E_1)_x\times (E_2)_x$,
is a local trivialization of~$E_1\oplus E_2$.
\end{numba}
\begin{numba}
It is easy to see that the linear mapping
\[
C^r(X,E_1)\times C^r(X,E_2)\to C^r(X,E_1\oplus E_2),\;\;\;
(\sigma_1,\sigma_2)\mto (x\mto (\sigma_1(x),\sigma_2(x)))
\]
is an isomorphism of topological vector spaces,
and so is the corresponding linear map $\cD^r(X,E_1)\times \cD^r(X,E_2)\to
\cD^r(X,E_1\oplus E_2)$.
\end{numba}
\begin{numba}
If $U$ is an open subset of $E_1$,
we define
\[
U\oplus E_2:=\bigcup_{x\in X} (U\cap (E_1)_x)\times (E_2)_x.
\]
It is easily verified that $U\oplus E_2$ is an open subset
of $E_1\oplus E_2$.
\end{numba}
\begin{numba}
Given a vector bundle $E$,
and $k\in \N$, we abbreviate $E^k:=E\oplus\cdots \oplus E$
($k$-fold sum).
\end{numba}
If $F$ is a locally convex real or complex
topological vector space,
we shall say that a zero-neighbourhood $U\sub F$
is $\R$-{\em balanced\/}
if it is balanced in~$F$, considered as a real
locally convex space.
\begin{defn}
Given a $C^r$-vector bundle
$\pi\!: E\to X$ over a finite-dimensional,
$\sigma$-compact base, and a neighbourhood
$U$ of the zero-section $0_X$ in~$E$,
we set
\[
C^{\,r}_K(X,U):=\{\sigma\in C^{\,r}_K(X,U)\!:\im(\sigma)\sub U\,\}
\]
for compact subsets $K$ of~$X$,
and define
$\cD^r(X,U):= \{\sigma\in \cD^r(X,E)\!:
\im(\sigma)\sub U\,\}$.
The neighbourhood $U$ will be called
$\R$-{\em balanced\/}
if $U\cap E_x$ is an $\R$-balanced
zero-neighbourhood in~$E_x$, for each $x\in X$.
\end{defn}
In the preceding situation, we have:
\begin{la}\label{thingsopen}
If~$U$ is an $\R$-balanced open neighbourhood
of $0_X$ in~$E$, then
$C^{\,r}_K(X,U)$ is an open zero-neighbourhood in
$C^{\, r}_K(X,E)$, and $\cD^r(X,U)$ is
an open zero-neighbourhood in $\cD^r(X,E)$.
\end{la}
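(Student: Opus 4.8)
If $U$ is an $\R$-balanced open neighbourhood of $0_X$ in $E$, then $C^{\,r}_K(X,U)$ is an open zero-neighbourhood in $C^{\, r}_K(X,E)$, and $\cD^r(X,U)$ is an open zero-neighbourhood in $\cD^r(X,E)$.

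The plan is to reduce both assertions to a single statement about arbitrary open neighbourhoods of the zero-section, and then to upgrade ``zero-neighbourhood'' to ``open'' by a fibrewise-translation trick. The statement to establish is: for \emph{every} open neighbourhood $W$ of $0_X$ in $E$, the set $C^r_K(X,W)$ is a zero-neighbourhood in $C^r_K(X,E)$ (for each compact $K\sub X$), and $\cD^r(X,W)$ is a zero-neighbourhood in $\cD^r(X,E)$. Granting this, the lemma follows. Indeed, given $\sigma_0\in C^r_K(X,U)$ (resp.\ $\sigma_0\in\cD^r(X,U)$), the fibrewise translation $a_{\sigma_0}\!:E\to E$, $v\mapsto v+\sigma_0(\pi(v))$, is a $C^r$-diffeomorphism (in a local trivialization $\psi$ it reads $(x,v)\mapsto(x,v+(\sigma_0)_\psi(x))$, with inverse $a_{-\sigma_0}$), so $W:=a_{\sigma_0}^{-1}(U)$ is an open neighbourhood of $0_X$ since $a_{\sigma_0}(0_x)=\sigma_0(x)\in U$. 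For $\tau\in C^r_K(X,W)$ one has $\tau(x)\in W\cap E_x$, hence $\tau(x)+\sigma_0(x)\in U$, for every $x$, while $\Supp(\sigma_0+\tau)\sub K$; thus $\sigma_0+C^r_K(X,W)\sub C^r_K(X,U)$, and since translation by $\sigma_0$ is a homeomorphism of the locally convex space $C^r_K(X,E)$ and $C^r_K(X,W)$ is a zero-neighbourhood, $C^r_K(X,U)$ is a neighbourhood of~$\sigma_0$. As $\sigma_0$ was arbitrary, $C^r_K(X,U)$ is open; it contains the zero-section, so it is an open zero-neighbourhood, and the $\cD^r$-version is verbatim the same. (I remark in passing that $\R$-balancedness of~$U$ is not actually used in this argument.)

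To prove the reduced statement for $C^r_K$, fix a compact $K\sub X$ and an open $W\supseteq 0_X$. Choose finitely many local trivializations $\psi_1,\dots,\psi_m$ whose domains cover~$K$ and, using that the finite-dimensional base is locally compact, compact sets $C_i\sub X_{\psi_i}$ with $K\sub C_1\cup\dots\cup C_m$. In the trivialization $\psi_i$ the set $W$ corresponds to an open $W^{(i)}\sub X_{\psi_i}\times F$ containing $C_i\times\{0\}$, so the tube lemma yields a convex balanced open zero-neighbourhood $V_i\sub F$ with $C_i\times V_i\sub W^{(i)}$. Since $d^0\!:C^r(X_{\psi_i},F)\to C(X_{\psi_i},F)_c$ is continuous and $\theta_{\psi_i}\!:C^r_K(X,E)\to C^r(X_{\psi_i},F)$ is continuous and linear, the set $N:=\bigcap_{i=1}^m\theta_{\psi_i}^{-1}\bigl(\{g\in C^r(X_{\psi_i},F):g(C_i)\sub V_i\}\bigr)$ is a convex balanced open zero-neighbourhood of $C^r_K(X,E)$; and every $\sigma\in N$ satisfies $\im\sigma\sub W$ (for $x\in K$ pick~$i$ with $x\in C_i$, so $\psi_i(\sigma(x))=(x,\sigma_{\psi_i}(x))\in C_i\times V_i\sub W^{(i)}$; for $x\notin K$, $\sigma(x)=0_x\in W$). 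Hence $N\sub C^r_K(X,W)$.

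For the $\cD^r$-part, recall from Remark~\ref{indu} that $\cD^r(X,E)=\dl\,C^r_{K_n}(X,E)$ along an exhaustion $(K_n)$ of $X$ by compact sets. Here I would use the general fact that in a locally convex inductive limit $\dl V_n$ a subset $S$ with $0\in S$ is a zero-neighbourhood as soon as $S\cap V_n$ is a zero-neighbourhood of $V_n$ for every $n$: its convex balanced core $S_0$ satisfies $S_0\cap V_n=\mathrm{core}(S\cap V_n)$, which is again a zero-neighbourhood (the convex balanced core of a zero-neighbourhood of a locally convex space contains a convex balanced one), so $S_0$ is a basic zero-neighbourhood of the inductive limit and $S_0\sub S$. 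Applying this to $S=\cD^r(X,W)$, whose trace on $C^r_{K_n}(X,E)$ is $C^r_{K_n}(X,W)$ --- a zero-neighbourhood by the preceding paragraph --- finishes the reduced statement, hence the lemma. I expect the only genuine work to be the bookkeeping in the second paragraph (the compact refinement of the trivializing cover, the tube lemma applied chart by chart, and the assembly of~$N$); the one step to handle with care is the inductive-limit one, where one must argue via zero-neighbourhoods and cores and resist passing directly from ``open on each $C^r_{K_n}(X,E)$'' to ``open in $\cD^r(X,E)$''.
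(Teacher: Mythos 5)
Your reduction of openness to a zero-neighbourhood statement via the fibrewise translation $a_{\sigma_0}$ is legitimate, and your chart-by-chart tube-lemma argument for the $C^{\,r}_K$-part is correct. The genuine gap is in the $\cD^r$-part, i.e.\ in the ``general fact'' that a set $S\ni 0$ in a countable strict locally convex direct limit $\dl V_n$ is a zero-neighbourhood as soon as each trace $S\cap V_n$ is one. This is false when the steps are infinite-dimensional, as they are here: only \emph{convex} sets with zero-neighbourhood traces are guaranteed to be zero-neighbourhoods of the locally convex direct limit, and the trace filter is in general strictly finer than the neighbourhood filter. (Concretely, in $\cD^0(\R)=\varinjlim C_{[-n,n]}(\R)$ one can take continuous positive functions $g_n$ on $C([0,1])$ that are bounded away from $0$ only on balls of radius $<1/n$ -- possible because closed balls are not compact -- and set $S:=\{f: |f(n+\tfrac12)|<g_n(f|_{[0,1]})\ \mbox{for all } n\}$; its traces are zero-neighbourhoods, but no basic neighbourhood $\{f: |f|<\ve(\cdot)\}$ is contained in $S$. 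This failure is exactly why ``continuous on each $C^r_K$'' does not imply ``continuous on $\cD^r$'' and why the 2013 remark in the paper insists on \emph{local} maps.) Your attempted repair via the ``convex balanced core'' does not close the gap: if the core means the union of all convex balanced subsets of $S$, then the identity $S_0\cap V_n=\mathrm{core}(S\cap V_n)$ is fine but $S_0$ need not be convex, so it is not a ``basic zero-neighbourhood''; if it means a largest convex balanced subset, such a set need not exist. My counterexample shows no repair for arbitrary $S$ is possible, since that $S$ contains no convex set with zero-neighbourhood traces at all.

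What saves the day -- and is the actual content of the paper's proof -- is that one must manufacture a \emph{convex} set with zero-neighbourhood traces inside $\cD^r(X,U)$ around the given section $\tau$: the paper chooses a locally finite cover $(U_x)$, convex $\R$-balanced fibre neighbourhoods $I_x$ with $A_x\times I_x$ inside the trivialized picture of $U$ (here the $\R$-balancedness of $U$ is used, to fit the whole segment $[-1,1]\tau_{\psi_x}(x)$ and keep $I_x$ a convex zero-neighbourhood containing $\tau_{\psi_x}(x)$), and sets $V:=\bigcap_x\{\sigma: \sigma_{\psi_x}(\wb{U}_{\!x})\sub I_x\}$; on each $C^{\,r}_K(X,E)$ only finitely many factors matter, so $V$ is a convex set with open zero-neighbourhood traces, hence a zero-neighbourhood of the limit, and $\tau\in V\sub \cD^r(X,U)$. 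If you want to keep your (nice) translation trick and thereby dispense with balancedness, you can: but then you must prove the reduced statement for $\cD^r(X,W)$ by this same convexity construction with $\tau=0$ (choose $I_x$ convex balanced with $A_x\times I_x$ inside the trivialization of $W$, take a locally finite refinement, intersect), not by appeal to a trace criterion for arbitrary sets.
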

\begin{proof}
Since
$\cD^r(X,U)\cap C^{\,r}_K(X,E)
=C^{\, r}_K(X,U)$ and
$C^{\,r}_K(X,E)\sub \cD^r(X,E)$ carries the subspace
topology,
it suffices to show
that $\cD^r(X,U)$ is an open zero-neighbourhood
in $\cD^r(X,E)$.
To this end,
let $\tau \in \cD^r(X,U)$;
we have to show that $\cD^r(X,U)$ is
a neighbourhood of~$\tau$.
For $x\in X$, choose a local trivialization
$\psi_x\!:\pi^{-1}(B_x)\to B_x\times F$
of~$E$ about~$x$.
Then $Q:=\psi_x(U\cap \pi^{-1}(B_x))$ is an open neighbourhood
of $(x,\tau_{\psi_x}(x))$ in $B_x\times F$. Since
$U\cap E_x$ is
$\R$-balanced, indeed $C:=\{x\}\times ([-1,1]\tau_{\psi_x}(x))
\sub Q$. As $C$ is compact, we find an open neighbourhood
$A_x\sub B_x$ of~$x$ and an open, $\R$-balanced,
convex
zero-neighbourhood
$H_x$ in~$F$ such that $A_x\times I_x\sub Q$,
where $I_x:=([-1,1]\tau_{\psi_x}(x))+H_x$.
Note that $I_x$
is an $\R$-balanced, convex, open zero-neighbourhood in~$F$
containing $\tau_{\psi_x}(x)$. After shrinking
$A_x$, we may assume that $A_x$
is relatively compact,
$\wb{A_x}\sub B_x$,
and
$\tau_{\psi_x}(\wb{A_x})\sub I_x$.
There is an open cover $(U_x)_{x\in X}$ of~$X$
such that $(\wb{U}_{\!x})_{x\in X}$
is locally finite, and
$\wb{U}_{\!x}\sub A_x$ for all $x\in X$.
Since $\wb{U}_{\!x}$ is a compact subset of~$B_x$
and $I_x\sub F$ an open zero-neighbourhood,
$\{\gamma\in C^r(B_x,F)\!:\gamma(\wb{U}_{\!x})\sub I_x\}$
is an open zero-neighbourhood
in $C^r(B_x,F)$.
The mapping
$\theta_{\psi_x}\!:\cD^r(X,E)\to C^r(B_x,F)$,
$\sigma\mto \sigma_{\psi_x}$
being continuous,
we deduce that
\[
V_x:=\{\sigma\in \cD^r(X,E)\!: \sigma_{\psi_x}(\wb{U}_{\!x})\sub
I_x\}
\]
is an open, convex zero-neighbourhood
in $\cD^r(X,E)$.
Note that $V_x=\cD^r(X,E)$ if $U_x=\emptyset$.
Then
$V:=\bigcap_{x\in X}V_x$
is a convex subset of $\cD^r(X,E)$.
By construction, we have $\tau\in V$
and $V\sub \cD^r(X,U)$.
So, the proof will be complete
if we can show that~$V$ is an open
zero-neighbourhood in $\cD^r(X,E)$.
To see this,
note that, for every compact subset~$K$
of~$X$, the set $F_K:=\{x\in X\!: \wb{U}_{\!x}\cap K\not=\emptyset\}$
is finite, whence
$C^{\,r}_K(X,E)\cap V=C^{\,r}_K(X,E)\cap
\bigcap_{x\in F_K}V_x$
is a convex, open zero-neighbourhood
in $C^{\,r}_K(X,E)$.
We deduce that~$V$ is a convex, open zero-neighbourhood
in the locally convex direct limit
$\cD^r(X,E)=\dl\, C^{\,r}_K(X,E)$.\vspace{-.8 mm}
\end{proof}
\section{Push-forward of {\boldmath $C^r$- and
$C^r_K\,$-\,sections}}\label{sec4}
In this section and the next,
we prove generalizations of Proposition~\ref{pushforw0}
and Proposition~\ref{pushforw1}
for mappings between spaces of sections in vector
bundles.
\begin{defn}\label{defnparvect}
Suppose that $\pi_1\!:E_1\to X$ and $\pi_2\!: E_2\to X$
are $C^r$-vector bundles over the same base,
with typical fibres $F_1$ and $F_2$, respectively.
A mapping $f\!: U \to E_2$,
defined on an open neighbourhood~$U$
of the zero-section~$0_X$ in~$E_1$,
is called a {\em bundle map\/}
if it preserves fibres, {\em i.e.},
$f(U\cap (E_1)_x)\sub (E_2)_x$ for all $x\in X$.
Then, given local trivializations
$\psi\!: \pi_1^{-1}(X_\psi)\to X_\psi\times F_1$
and $\phi\!:\pi_2^{-1}(X_\phi)\to X_\phi\times F_2$
of $E_1$, resp., $E_2$, we have
\[
\phi(f(\psi^{-1}(x,v)))=(x, f_{\phi,\psi}(x,v))
\]
for all $(x,v)\in U_{\phi,\psi}:=\psi(U\cap E_1|_{X_\psi\cap X_\phi})
\sub (X_\psi\cap X_\phi)\times F_1$,
for a uniquely determined mapping
\[
f_{\phi,\psi}\!: U_{\phi,\psi}\to F_2.
\]
Given $k\in \N_0\cup \{\infty\}$,
we shall say that a bundle map $f\!: E_1\supseteq U \to E_2$ is
{\em partially $C^k$ in the vector variable\/}
if $f_{\phi,\psi}$ is partially $C^k$ in the second variable
({\em i.e.}, in $v$), for any pair $(\psi,\phi)$
of local trivializations of $E_1$ and $E_2$
(equivalently, for any $(\psi,\phi)\in\cA_1\times\cA_2$,
where $\cA_i$ is any (not necessarily maximal)
atlas for~$E_i$).
\end{defn}
\begin{rem}
For our applications
in Section~\ref{secmapgps},
it would not be enough
to restrict our studies to the
case $U=E_1$,
where $f$ is defined globally.
\end{rem}
\begin{numba}
In the situation of Definition~\ref{defnparvect},
suppose that $f\!: U \to E_2$ is a bundle map
which is partially $C^k$ in the vector variable.
Then, as we shall presently see, $f$ gives rise to
a bundle map
\[
\delta f\!: U \oplus E_1\to E_2
\]
on $U\oplus E_1\sub E_1\oplus E_1$,
partially $C^{k-1}$ in the vector variable,
which is an analogue
of the mapping $d_2g$ for an
ordinary vector-valued function~$g$ which is partially~$C^k$
in the second variable.
It is determined by the condition that
\begin{equation}\label{chreqn}
(\delta f)_{\phi, (\psi \oplus \psi)}
=d_2 (f_{\phi,\psi})
\end{equation}
for all
local trivializations
$\psi\!: \pi_1^{-1}(X_\psi)\to X_\psi\times F_1$
of $E_1$ and $\phi\!: \pi_2^{-1}(X_\phi)\to X_\phi\times F_2$
of $E_2$.
\end{numba}
\begin{numba}
To see that a bundle map $\delta f\!: U \oplus E_1\to E_2$
satisfying (\ref{chreqn}) exists, we have to show
that, given any local trivializations
$\psi$, $\psi'$ of $E_1$ and $\phi$, $\phi'$ of $E_2$,
we have
\begin{equation}\label{eq4}
d_2 f_{\phi,\psi}(x,v,w)=
g_{\phi,\phi'}(x)\, . \,
d_2f_{\phi',\, \psi'}(x,g_{\psi',\psi}(x).v,\, g_{\psi',\psi}(x).w) 
\end{equation}
for all $(x,v,w)\in (U_{\phi,\psi}\times F_1)
\,\cap\, ((X_{\phi'}\cap X_{\psi'})\times F_1\times F_1)$.
As
\[
f_{\phi,\psi}(x,v)
=g_{\phi,\phi'}(x).f_{\phi',\psi'}(x,g_{\psi',\psi}(x).v)
\]
for all $(x,v)\in U_{\phi,\psi}\,\cap\,
(X_{\phi'}\cap X_{\psi'})\times F_1$,
the identity\,(\ref{eq4}) is an immediate consequence of the
chain rule.
\end{numba}
\begin{numba}
If the bundle map $f\!: E_1\supseteq U\to E_2$ is partially
$C^k$ in the vector variable,
we obtain bundle maps $\delta^jf \!: U\oplus
E^{2^j-1}_1\to E_2$
for $j\in |k]$
(which are partially $C^{k-j}$ in the vector variable)
recursively via
$\delta^0f:= f$, $\delta^j f:= \delta(\delta^{j-1}f)$
(for $0<j\in |k]$).
Note that the mappings $(d_2)^j f_{\phi,\psi}$
are of class $C^r$ for all $j\in |k]$
if and only if the
the bundle mappings $\delta^j f\!: U\oplus E_1^{2^j-1}\to E_2$
are of class~$C^r$ for all $j\in |k]$.
\end{numba}
\begin{thm}\label{pushforw2}
Suppose that $\pi_1\!: E_1\to X$ and
$\pi_2\!: E_2\to X$ are $C^r$-vector bundles
over the same $($not necessarily finite-dimensional$)$
base, with typical fibres $F_1$,
resp., $F_2$.
Let $k\in \N_0\cup\{\infty\}$,
and suppose that $f\!: E_1\to E_2$ is a bundle
map such that
\begin{itemize}
\item[\n (a)]
$f$ is partially $C^k$ in the vector variable, and
\item[\n (b)]
The functions $\delta^jf$ are of
class $C^r$, for every $j\in |k]$.
\end{itemize}
Then
\[
C^r(X,f)\!: C^r(X,E_1)\to C^r(X,E_2),\;\;\;
\sigma \mto f\circ \sigma
\]
is a mapping of class~$C^k$,
with differentials
$d^jC^r(X,f)=C^r(X, \delta^j f)$
for all $j\in |k]$,
using the identification $C^r(X,E_1)^{2^j}\isom
C^r(X,E^{2^j}_1)$.
\end{thm}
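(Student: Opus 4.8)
The plan is to reduce the bundle statement to the already-proven vector-space case, Proposition~\ref{pushforw1}, by localizing with an atlas of local trivializations and invoking the initial-topology criterion, Proposition~\ref{inittop}. Recall from Lemma~\ref{famgivessec} and Definition~\ref{dfntops} that for a $C^r$-vector bundle $\pi\!:E\to X$ with typical fibre $F$ and atlas $\cA$ of local trivializations, the map $\Gamma\!:C^r(X,E)\to\prod_{\psi\in\cA}C^r(X_\psi,F)$, $\sigma\mapsto(\sigma_\psi)_{\psi\in\cA}$, is a topological embedding onto a closed (hence sequentially closed) subspace. Thus the hypotheses of Proposition~\ref{inittop} are met for the target $C^r(X,E_2)$: to show $C^r(X,f)$ is of class $C^k$, it suffices to show that $\theta_\phi\circ C^r(X,f)\!:C^r(X,E_1)\to C^r(X_\phi,F_2)$ is of class $C^k$ for each $\phi\in\cA_2$, where $\cA_2$ is any atlas for $E_2$. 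Here I would like to use Lemma~\ref{anyatlas} to work with convenient, possibly non-maximal atlases on both bundles.

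Next I would fix $\phi\in\cA_2$ and cover $X_\phi$ by domains $X_\psi$ of local trivializations $\psi$ of $E_1$; by Remark~\ref{trivialI} (applied to $C^r(X_\phi,F_2)$ with the open cover $\{X_\phi\cap X_\psi\}$), it is enough to check that $\sigma\mapsto(\theta_\phi(C^r(X,f)(\sigma)))|_{X_\phi\cap X_\psi}$ is of class $C^k$ for each such $\psi$. Now unravelling definitions: for $\sigma\in C^r(X,E_1)$ with $x\in X_\phi\cap X_\psi$ we have $\phi(f(\sigma(x)))=(x,f_{\phi,\psi}(x,\sigma_\psi(x)))$, so on $X_\phi\cap X_\psi$ the map in question is $\sigma\mapsto (f_{\phi,\psi})_*\bigl(\sigma_\psi|_{X_\phi\cap X_\psi}\bigr)$, the composition of the continuous linear restriction-and-trivialization map $C^r(X,E_1)\to C^r(X_\phi\cap X_\psi,F_1)$, $\sigma\mapsto\sigma_\psi|_{X_\phi\cap X_\psi}$ (continuous by Remark~\ref{trivialI} and Definition~\ref{dfntops}), followed by the pushforward $(f_{\phi,\psi})_*$. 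Hypothesis~(a) says $f_{\phi,\psi}$ is partially $C^k$ in the second variable, and hypothesis~(b), together with the equivalence noted just before the theorem statement, says exactly that $(d_2)^j f_{\phi,\psi}$ is of class $C^r$ for all $j\in|k]$. Hence Proposition~\ref{pushforw1}, applied with $(E,F,X)$ replaced by $(F_1,F_2,X_\phi\cap X_\psi)$, shows $(f_{\phi,\psi})_*\!:C^r(X_\phi\cap X_\psi,F_1)\to C^r(X_\phi\cap X_\psi,F_2)$ is of class $C^k$. Composing with the continuous linear map above gives the desired conclusion, so $C^r(X,f)$ is of class $C^k$.

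For the differential formula $d^jC^r(X,f)=C^r(X,\delta^jf)$, I would argue by induction on $j\in|k]$, $j<k$. At each stage the directional-derivative computation can again be localized: under the identifications $C^r(X,E_1)^{2^j}\isom C^r(X,E_1^{2^j})$ and the embedding $\Gamma$ for $E_2$, applying $\theta_\phi$ and restricting to $X_\phi\cap X_\psi$ reduces $d\bigl(d^jC^r(X,f)\bigr)$ to $d\bigl((d_2^jf_{\phi,\psi})_*\bigr)$, which by the induction step inside the proof of Proposition~\ref{pushforw1} equals $(d_2^{j+1}f_{\phi,\psi})_*$; by the characterization~(\ref{chreqn}) this is precisely $\theta_\phi\circ C^r(X,\delta^{j+1}f)$ restricted to $X_\phi\cap X_\psi$. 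Since the $X_\phi\cap X_\psi$ cover and the $\theta_\phi$ separate (initial topology), the identity $d^{j+1}C^r(X,f)=C^r(X,\delta^{j+1}f)$ follows, and continuity of this differential is the class-$C^k$ statement already established for $\delta^{j+1}f$ in place of $f$ (legitimate since $\delta^{j+1}f$ satisfies hypotheses (a),(b) with $k$ replaced by $k-j-1$, as noted before the theorem).

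The routine parts are the bookkeeping of local trivializations and the verification that all the relevant restriction maps are continuous linear; I expect the main obstacle to be purely organizational: making sure the identification $C^r(X,E_1)^{2^j}\isom C^r(X,E_1^{2^j})$ is compatible, under all the $\theta_\phi$ and restrictions, with the identification $C^r(X_\phi\cap X_\psi,F_1)^{2^j}\isom C^r(X_\phi\cap X_\psi,F_1^{2^j})$ used inside Proposition~\ref{pushforw1}, and that the local trivialization $\psi\oplus\cdots\oplus\psi$ of $E_1^{2^j}$ is the one producing the right-hand side of~(\ref{chreqn}); this is exactly what was set up in {\bf\ref{sumsvble}} and the paragraphs surrounding~(\ref{chreqn}), so it is a matter of careful reference rather than new mathematics. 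No completeness or sequential-completeness hypothesis on the fibres is needed, since Proposition~\ref{inittop} only requires the image of $\Gamma$ to be sequentially closed, which holds by Lemma~\ref{famgivessec}.
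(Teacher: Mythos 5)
Your proposal is correct and follows essentially the same route as the paper: localize via trivializations, apply Proposition~\ref{pushforw1} to the local representatives $f_{\phi,\psi}$, globalize using Proposition~\ref{inittop} together with Lemma~\ref{famgivessec} and Lemma~\ref{anyatlas}, and derive the differential formula from $d^j\bigl((f_{\phi,\psi})_*\bigr)=(d_2^jf_{\phi,\psi})_*$ combined with~(\ref{chreqn}). The only deviation is organizational: the paper fixes one open cover over which \emph{both} bundles trivialize, so a single application of Proposition~\ref{inittop} (plus linearity of the trivialization maps and the Chain Rule) suffices, whereas your two-stage localization (an atlas for $E_2$, then Remark~\ref{trivialI} inside each $C^r(X_\phi,F_2)$) tacitly requires the same sequentially-closed-image hypothesis for the family of restriction maps on $C^r(X_\phi,F_2)$ --- easily supplied by an argument parallel to Lemma~\ref{famgivessec}, or avoided altogether by refining to a common trivializing cover as the paper does.
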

\begin{proof}
We let $\cP$ be an open cover of
$X$ such that for every $P\in\cP$,
there are local trivializations
$\psi_P\!:\pi_1^{-1}(P)\to P\times F_1$
and $\phi_P\!: \pi_2^{-1}(P)\to P\times F_2$.
Given $P\in \cP$,
we have
\[
\beta_P\circ C^r(X,f)= (f_{\phi_P,\psi_P})_* \circ
\alpha_P
\]
where $\alpha_P\!: C^r(X,E_1)\to C^r(P,F_1)$,
$\sigma\mto \sigma_{\psi_P}$
and $\beta_P\!: C^r(X,E_2)\to C^r(P,F_2)$,
$\sigma\mto \sigma_{\phi_P}$
are continuous linear maps.
By Proposition~\ref{pushforw1},
$(f_{\phi_P,\psi_P})_*$
is a $C^k$-map,
with $d^j((f_{\phi_P,\psi_P})_*)
=(d^j_2f_{\phi_P,\psi_P})_*$,
for all $j\in |k]$,
identifying $C^r(P,F_1)^{2^j}$
with $C^r(P,F_1^{2^j})$.
In view of
Lemma~\ref{famgivessec} and Lemma~\ref{anyatlas},
Proposition~\ref{inittop}
shows that $C^r(X,f)$ is
a mapping of class~$C^k$.
The mappings $\alpha_P$ and $\beta_P$ being linear,
given $j\in |k]$,
the Chain Rule
yields
\begin{eqnarray*}
\beta_P\circ d^jC^r(X,f)
& = & d^j(\beta_P\circ C^r(X,f))
=d^j((f_{\phi_P,\psi_P})_*\circ \alpha_P)
=d^j((f_{\phi_P,\psi_P})_*) \circ \alpha_P^{2^j}\\
& = &(d_2^jf_{\phi_P,\psi_P})_*\circ \alpha_P^{2^j}
=((\delta^jf)_{\phi_P,\psi_P^{2^j}})_*\circ \alpha_P^{2^j}
=\beta_P\circ C^r(X,\delta^jf)
\end{eqnarray*}
for all $P\in\cP$,
considering $\alpha_P^{2^j}\!:
C^r(X,E_1)^{2^j}\to C^r(P,F_1)^{2^j}$
as a mapping into $C^r(P,F_1^{2^j})$
in the last three terms,
and abbreviating $\psi_P^{2^j}:=
\psi_P\oplus\cdots\oplus \psi_P$ ($2^j$-fold sum
of local trivializations).
Thus $d^jC^r(X,f)=C^r(X,\delta^jf)$.
\end{proof}
\begin{thm}\label{pushforwK}
Suppose that $\pi_1\!: E_1\to X$ and
$\pi_2\!: E_2\to X$ are $C^r$-vector bundles
over the same $\sigma$-compact,
finite-dimensional base~$X$,
with typical fibres $F_1$,
resp., $F_2$.
Let $k\in \N_0\cup\{\infty\}$,
and suppose that $f\!: U \to E_2$ is a bundle
map, defined on an $\R$-balanced open neighbourhood~$U$
of the zero-section in~$E_1$, such that
\begin{itemize}
\item[\n (a)]
$f(0_x)=0_x$ for all $x\in X$,
\item[\n (b)]
$f$ is partially $C^k$ in the vector variable, and
\item[\n (c)]
The functions $\delta^jf$ are of
class $C^r$, for every $j\in |k]$.
\end{itemize}
Then
\[
C^{\,r}_K(X,f)\!: \;C^{\,r}_K(X,U)\to C^{\,r}_K(X,E_2),\;\;\;
\sigma \mto f\circ \sigma
\]
is a mapping of class~$C^k$,
for every compact subset~$K$ of~$X$.
For all $j\in |k]$,
we have
\[
d^jC^{\,r}_K(X,f)=C^{\,r}_K(X,\delta^jf)\!:C^{\,r}_K(X,U\oplus E_1^{2^j-1})
\to C^{\,r}_K(X,E_2),
\]
identifying $C^{\,r}_K(X,U)\times C^{\,r}_K(X,E_1)^{2^j-1}$
with $C^{\,r}_K(X,U\oplus E_1^{2^j-1})$ in the natural way.
\end{thm}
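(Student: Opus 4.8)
The strategy is to reduce Theorem~\ref{pushforwK} to the local, vector-space-valued statement of Proposition~\ref{pushforw0} via the atlas-of-trivializations machinery, exactly as Theorem~\ref{pushforw2} was reduced to Proposition~\ref{pushforw1}. The extra subtleties compared to Theorem~\ref{pushforw2} are three: (i) $f$ is only defined on an $\R$-balanced open neighbourhood~$U$ of the zero-section, so one must first know that $C^{\,r}_K(X,U)$ is an open subset of $C^{\,r}_K(X,E_1)$ (this is Lemma~\ref{thingsopen}); (ii) the local representatives $f_{\phi,\psi}$ are now defined on open subsets $U_{\phi,\psi}$ of $(X_\psi\cap X_\phi)\times F_1$ rather than on all of $(X_\psi\cap X_\phi)\times F_1$, so the hypotheses of Proposition~\ref{pushforw0} (with its clause (a): $f_{\phi,\psi}(x,0)=0$ off a compact set) must be checked; and (iii) the space $C^{\,r}_K(X,E_2)$ carries a subspace topology from $C^r(X,E_2)$, and one should identify its sections locally through trivializations both over a cover of $X\take K$ and over a cover of a $\sigma$-compact neighbourhood of~$K$.

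First I would fix, by $\sigma$-compactness and finite-dimensionality, an open cover $\cP$ of~$X$ by relatively compact sets $P$ such that over each $P$ there are local trivializations $\psi_P$ of $E_1$ and $\phi_P$ of $E_2$, and such that $\cP=\cP_0\cup\cP_1$ where $\bigcup\cP_0\sub X\take K$ and $\bigcup\cP_1$ is a $\sigma$-compact neighbourhood of~$K$; shrinking, I may assume each $\wb P$ is compact with $\wb P$ inside the domains of $\psi_P,\phi_P$. For $P\in\cP_0$ I note that $\sigma_{\psi_P}=0$ for every $\sigma\in C^{\,r}_K(X,E_1)$, so the corresponding coordinate map is identically zero. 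For $P\in\cP_1$, I observe that the local representative $f_{\phi_P,\psi_P}\colon U_{\phi_P,\psi_P}\to F_2$ satisfies hypotheses (a)--(c) of Proposition~\ref{pushforw0} with base $P$ and compact set $\wb P\cap K$ (or $\emptyset$ if $P$ misses $K$): condition (a) there, namely $f_{\phi_P,\psi_P}(x,0)=0$ off that compact set, follows from $f(0_x)=0_x$ together with linearity of the trivializations on fibres; (b) is hypothesis~(b) here (partially $C^k$ in the vector variable unwinds to $f_{\phi_P,\psi_P}$ partially $C^k$ in the second argument); and (c) is hypothesis~(c) here via the characterization that $\delta^jf$ of class $C^r$ is equivalent to all $(d_2)^j f_{\phi,\psi}$ of class $C^r$. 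Hence by Proposition~\ref{pushforw0}, $C^{\,r}_{\wb P\cap K}(P,U_{\psi_P})$ is open in $C^{\,r}_{\wb P\cap K}(P,F_1)$ and $(f_{\phi_P,\psi_P})_*$ is a $C^k$-map on it, with $d^j((f_{\phi_P,\psi_P})_*)=((d_2)^jf_{\phi_P,\psi_P})_*$.

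Next I would assemble these local statements. By Lemma~\ref{thingsopen}, $C^{\,r}_K(X,U)$ is an open zero-neighbourhood in $C^{\,r}_K(X,E_1)$, so the domain makes sense. On this domain I have, for each $P\in\cP_1$,
\[
\beta_P\circ C^{\,r}_K(X,f)=(f_{\phi_P,\psi_P})_*\circ\alpha_P,
\]
where $\alpha_P\colon C^{\,r}_K(X,E_1)\to C^{\,r}_{\wb P\cap K}(P,F_1)$, $\sigma\mapsto\sigma_{\psi_P}$ and $\beta_P\colon C^{\,r}_K(X,E_2)\to C^{\,r}_{\wb P\cap K}(P,F_2)$, $\sigma\mapsto\sigma_{\phi_P}$ are continuous linear, using Lemma~\ref{restrcts} to restrict to $P$ and the continuity of the coordinate maps $\theta_\psi$; for $P\in\cP_0$ the corresponding $\beta_P\circ C^{\,r}_K(X,f)$ is zero. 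Since by Lemma~\ref{famgivessec} the topology on $C^{\,r}_K(X,E_2)\sub C^r(X,E_2)$ is initial with respect to the family $(\theta_\phi)_{\phi\in\cB}$ for the atlas $\cB$ arising from $\cP$ (using Lemma~\ref{anyatlas}), and since the image of $\Gamma$ is the sequentially closed subspace $H$ of a product, Proposition~\ref{inittop} applies: $C^{\,r}_K(X,f)$ is of class $C^k$ because each $\beta_P\circ C^{\,r}_K(X,f)$ is. Finally, the formula for the differentials is obtained exactly as in Theorem~\ref{pushforw2}: applying the Chain Rule to $\beta_P\circ C^{\,r}_K(X,f)=(f_{\phi_P,\psi_P})_*\circ\alpha_P$ and using $d^j((f_{\phi_P,\psi_P})_*)=((d_2)^jf_{\phi_P,\psi_P})_*=((\delta^jf)_{\phi_P,\psi_P^{2^j}})_*$ together with the identification $C^{\,r}_K(X,E_1)^{2^j}\isom C^{\,r}_K(X,E_1^{2^j})$ and its compatibility with the trivializations $\psi_P^{2^j}$, one gets $\beta_P\circ d^jC^{\,r}_K(X,f)=\beta_P\circ C^{\,r}_K(X,\delta^jf)$ for every $P$, hence equality by separation.

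The main obstacle I anticipate is purely bookkeeping rather than conceptual: one must be careful that the compact set relevant to the \emph{local} application of Proposition~\ref{pushforw0} over $P$ is $\wb P\cap K$ (not $K$), that the case $P\not\subseteq$ any $\sigma$-compact chart is handled by the reduction $C^{\,r}_K(X,E)\isom C^{\,r}_K(U',E|_{U'})$ of Lemma~\ref{restrcts} for a $\sigma$-compact open $U'\supseteq K$, and that the identification $C^{\,r}_K(X,U)\times C^{\,r}_K(X,E_1)^{2^j-1}\cong C^{\,r}_K(X,U\oplus E_1^{2^j-1})$ used in the statement of the differentials is exactly the one induced fibrewise, so that it intertwines $\alpha_P^{2^j}$ with the coordinate map of the trivialization $\psi_P\oplus\cdots\oplus\psi_P$ of $E_1\oplus\cdots\oplus E_1$ on the open piece $U\oplus E_1^{2^j-1}$. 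Once these identifications are set up carefully, the argument is a faithful transcription of the proof of Theorem~\ref{pushforw2} with Proposition~\ref{pushforw0} in place of Proposition~\ref{pushforw1}.
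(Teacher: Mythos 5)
Your reduction has a genuine gap at the key step, namely the claim that Proposition~\ref{pushforw0} applies directly to the local representatives $f_{\phi_P,\psi_P}$ and that $\alpha_P$ maps $C^{\,r}_K(X,E_1)$ into $C^{\,r}_{\wb P\cap K}(P,F_1)$. First, Proposition~\ref{pushforw0} is stated for a map defined on a \emph{product} $X\times U$ with $U$ open in the fibre space; the set $U_{\phi_P,\psi_P}=\psi_P(U\cap E_1|_{P})$ is merely an open subset of $P\times F_1$ and in general has no product form, so the proposition does not apply to $f_{\phi_P,\psi_P}$ as it stands. Second, the support bookkeeping is not bookkeeping: for $\sigma\in C^{\,r}_K(X,E_1)$ the support of $\sigma_{\psi_P}$ in $P$ is a closed subset of $K\cap P$, which need not be contained in \emph{any} compact subset of $P$ (whenever $K$ meets $\partial P$, which is unavoidable for a connected $K$ not contained in a single trivializing patch). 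Hence the factorization $\beta_P\circ C^{\,r}_K(X,f)=(f_{\phi_P,\psi_P})_*\circ\alpha_P$ with $(f_{\phi_P,\psi_P})_*$ a $C^k$-map between compactly supported local section spaces is not available on all of $C^{\,r}_K(X,U)$, and the paper has no pushforward result for partially defined $f$ acting on the non-compactly-supported spaces $C^r(P,F_1)$ that you could substitute (Proposition~\ref{pushforw1} needs $f$ globally defined; moreover the relevant ``domain'' in $C^r(P,F_1)$ would not even be open).

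The paper's proof supplies exactly the two missing ingredients. It is local in $\tau$: around a fixed $\tau\in C^{\,r}_K(X,U)$ and each $x$, using $\R$-balancedness one shrinks to a product neighbourhood $A_x\times I_x\sub U_{\phi_x,\psi_x}$ containing the values of $\tau$ over $\wb{A_x}$, so that Proposition~\ref{pushforw0} can be applied to $f_{\phi_x,\psi_x}|_{A_x\times I_x}$. And it forces compact supports inside the chart by cutoff functions: one chooses a partition of unity $(h_x)$ subordinate to $(A_x)$ and functions $H_x$ with $H_x=1$ on $\Supp(h_x)$, $\Supp(H_x)=K_x\sub A_x$, and uses the identity $f\circ\sigma=\sum_{x\in F_K}h_x\cdot\bigl(f\circ(H_x\cdot\sigma)\bigr)$ on a suitable neighbourhood $V$ of $\tau$ (this is where hypothesis (a) and the $\R$-balancedness of $U$ enter essentially, since $H_x\cdot\sigma$ must again take values in $U$). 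This writes $C^{\,r}_K(X,f)|_V$ as a finite sum of compositions through the spaces $C^{\,r}_{K\cap K_x}(A_x,I_x)$, where Proposition~\ref{pushforw0}, Corollary~\ref{contmlt} and Lemma~\ref{restrcts} apply, and no appeal to Proposition~\ref{inittop} is needed. Your outline, which tries to transcribe the proof of Theorem~\ref{pushforw2} verbatim, cannot succeed without some such localization and cutoff argument (or an external result on pushforwards after restriction to relatively compact subsets, which is not available in this paper).
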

\begin{proof}
Let $\tau\in C^{\,r}_K(X,U)$ be given;
we show that
$C^{\,r}_K(X,f)$
is of class $C^k$ on a neighbourhood of~$\tau$.
First, we note that
every $x\in X$ has an open neighbourhood $B_x$
such that local trivializations
$\psi_x\!:\pi_1^{-1}(B_x)\to B_x\times F_1$ and
$\phi_x\!:\pi_2^{-1}(B_x)\to B_x\times F_2$
exist.
Then $Q:=\psi_x(U\cap \pi_1^{-1}(B_x))$ is an open neighbourhood
of $(x,\tau_{\psi_x}(x))$ in $B_x\times F_1$. As $U\cap E_x$ is
$\R$-balanced, furthermore $C:=\{x\}\times ([-1,1]\tau_{\psi_x}(x))
\sub Q$. Now~$C$ being compact, we find an open neighbourhood
$A_x\sub B_x$ of~$x$ and an open $\R$-balanced zero-neighbourhood
$H_x$ in~$F_1$ such that $A_x\times I_x\sub Q$,
where $I_x:=([-1,1]\tau_{\psi_x}(x))+H_x$
is an $\R$-balanced, open zero-neighbourhood in~$F_1$
containing $\tau_{\psi_x}(x)$. After shrinking
$A_x$, we may assume that $A_x$
is relatively compact, $\wb{A_x}\sub B_x$,
and $\tau_{\psi_x}(\wb{A_x})\sub I_x$.
Let $(h_x)_{x\in X}$
be a $C^r$-partition of unity subordinate to
the open cover $\{A_x\!:x\in X\}$ of~$X$,
such that $\Supp(h_x)\sub A_x$.
By compactness of~$K$, we have $\sum_{x\in F_K}h_x|_K=1$
for some finite subset~$F_K$ of~$X$.
For $x\in F_K$, let $H_x\!:X\to\R$ be a
$C^r$-function such that $0\leq H_x\leq 1$,
$H_x|_{\sSup(h_x)}=1$,
and $K_x:=\Supp(H_x)\sub A_x$.
Then
\[
V:=\{\sigma\in C^{\,r}_K(X,E)\!: (\forall x\in F_K)\;\;
\sigma_{\psi_x}(K_x)\sub I_x\}
\]
is an $\R$-balanced, open zero-neighbourhood in $C^{\,r}_K(X,E_1)$
which is contained in $C^{\,r}_K(X,U)$
(cf.\ proof of Lemma~\ref{thingsopen}),
and $\tau\in V$.
The claim will follow if we can show
that $C^{\,r}_K(X,f)|_V$ is of class~$C^k$.
Note that $f\circ \sigma=\sum_{x\in F_K}h_x\cdot (f\circ (H_x\cdot\sigma))$,
for all $\sigma\in V$.
Thus,
we have
\[
C^{\,r}_K(X,f)|_V
=\sum_{x\in F_K}
j_x\circ (r_x)^{-1} \circ
m_{h_x|_{A_x}} \circ
(f_{\phi_x,\psi_x}|_{A_x\times I_x})_* \circ
(m_{H_x|_{A_x}}
\circ
\theta_x)|_V^{C^r_{K\cap K_x}(A_x,I_x)},
\]
where
$\theta_x\!: C^{\,r}_K(X,E_1)\to C^r(A_x,F_1)$,
$\sigma\mto \sigma_{\psi_x}|_{A_x}$
and the multiplication maps
$m_{H_x|_{A_x}}\!: C^r(A_x,F_1)\to
C^r(A_x,F_1)$,
$\gamma\mto (H_x|_{A_x})\cdot \gamma$
and
$m_{h_x|_{A_x}}\!: C^{\,r}_{K\cap K_x}(A_x,F_2)
\to C^{\,r}_{K\cap K_x}(A_x,F_2)$
are continuous linear
and thus smooth
(Definition~\ref{dfntops}, Corollary~\ref{contmlt}),
the mapping\linebreak
$r_x\!:C^{\,r}_{K\cap K_x}(X,E_2)\to
C^{\,r}_{K\cap K_x}(A_x,F_2)$, $\sigma\mto \sigma_{\phi_x}|_{A_x}$
is an isomorphism of topological vector
spaces by Lemma~\ref{restrcts}
and Lemma~\ref{anyatlas},
the inclusion map
$j_x\!: C^{\,r}_{K\cap K_x}(X,E_2)\to C^{\,r}_K(X,E_2)$
is a continuous linear map,
and where $(f_{\phi_x,\psi_x}|_{A_x\times I_x})_*\!:
C^{\,r}_{K\cap K_x}(A_x,I_x)\to C^{\,r}_{K\cap K_x}(A_x,F_2)$
is a $C^k$-map by Proposition~\ref{pushforw0},
for all $x\in F_K$.
Thus $C^{\,r}_K(X,f)|_V$
is a $C^k$-map, being a composition
of $C^k$-maps.
\end{proof}
\section{Pushforward of $\cD^r$-sections\/}\label{sec5}
In this section, we prove our main result,
an analogue of
Theorem~\ref{pushforwK} for $\cD^r$-sections.
Our arguments make essential
use of an alternative, more
explicit description of the topology
on~$\cD^r(X,E)$
(in the spirit of
\cite{Sch}, Chapter~III, \S1),
which we provide first.
\begin{center}
{\bf Explicit description
of the topology on {\boldmath $\cD^r(X,E)$}}
\end{center}
\begin{numba}\label{hierloos}
Let $\pi\!: E\to X$ be a $C^r$-vector bundle
with a locally convex $\K$-vector space~$F$ as
typical fiber,
over a $\sigma$-compact, finite-dimensional
base~$X$, of dimension~$d$.
Let~$\Gamma$ be a set of
continuous seminorms $q\!:F\to [0,\infty[$
on~$F$ defining the topology.
We assume that~$\Gamma$ is directed
in the sense that, for every $q_1,q_2\in \Gamma$,
there exists $q_3\in \Gamma$ such that
$q_1\leq q_3$ and $q_2\leq q_3$ pointwise.
$\cK(X)$ denotes the
set of all compact subsets of~$X$.
\end{numba}
\begin{numba}\label{dstpalph}
There exists an open cover~$\cP$ of~$X$
such that, for every $P\in \cP$,
there exists
a local trivialization
$\psi_P\!:\pi^{-1}(P)\to P\times F$.
\end{numba}
\begin{numba}\label{defnUnVn}
Using \cite{Lan}, Chapter~II, \S3, Theorem~3.3,
we find
a locally finite open cover~$(\wt{U}_n)_{n\in \N}$
of~$X$,
a sequence $(\wt{\kappa}_n)_{n\in \N}$
of charts $\wt{\kappa}_n\!: \wt{U}_n\to \wt{V}_n$
of~$X$, where $\wt{V}_n$ is an open subset of~$\R^d$,
and relatively compact
open subsets~$U_n\sub \tilde{U}_n$
such that $X=\bigcup_{n\in\N}U_n$;
we set $\kappa_n:=\wt{\kappa}_n|_{U_n}^{V_n}$,
where $V_n:=\wt{\kappa}_n(U_n)$ (which is a relatively compact, open subset
of~$\wt{V}_n$).
Since each of the relatively compact sets~$U_n$
may be replaced by finitely
many open subsets~$W$ of~$U_n$ such that
$\wb{W}\sub P$ for some $P\in\cP$,
and then $\wt{W}\sub P$ for some open neighbourhood~$\wt{W}$
of $\wb{W}$ in~$\wt{U}_n$,
it is clear that we may assume that
the open cover $(\wt{U}_n)_{n\in \N}$ is subordinate
to~$\cP$. Thus, given $n\in\N$, we have
$\wt{U}_n\sub P_n$ for some $P_n\in\cP$.
We set $\psi_n:=\psi_{P_n}|_{\pi^{-1}(\wt{U}_n)}\!:
\pi^{-1}(\wt{U}_n)\to\wt{U}_n\times F$.
\end{numba}
\begin{numba}\label{unsch}
Given an open subset $Y$ of $\R^d$, $k\in |r]$,
$q\in\Gamma$, and compact subset $K$ of $Y$,
we define
\[
\|\gamma\|_{K,q,k}:= \max_{\alpha\in\N_0^d, |\alpha|\leq k}
\sup_{x\in K} q\left(
\partial^\alpha \gamma\, (x)\right)
\]
for $\gamma \in C^r(Y,F)$,
using standard multi-index notation
for the partial derivatives.
Then $(\|.\|_{K,q,k})_{K\in\cK(Y),q\in\Gamma,k\in|r]}$
is a family of continuous seminorms
on $C^r(Y,F)$ determining the given
locally convex topology (cf.\ \cite{Glo}, Proposition~4.4).
\end{numba}
\begin{numba}\label{dstpomeg}
Given $n\in \N$, $q\in \Gamma$,
and $k\in |r]$,
we define
\[
\|\sigma\|_{n,q,k}:= \| \sigma_{\psi_n}\circ
\wt{\kappa}_n^{-1}\|_{\wb{V_n},\, q,k}
\]
for $\sigma\in C^r(X, E)$.
The linear mappings $C^r(X,E)
\to C^r(\wt{V}_n,F)$, $\sigma\mto \sigma_{\psi_n}\circ
\wt{\kappa}^{-1}_n$ being continuous
by definition of the topology on $C^r(X,E)$,
we deduce from {\bf \ref{unsch}}
that $\|.\|_{n,q,k}$ is a continuous seminorm on $C^r(X,E)$,
for any $n\in \N$, $q\in\Gamma$, and $k\in |r]$.
\end{numba}
Then we have:
\begin{la}\label{expldestop}
The topology on $C^r(X,E)$
described in Definition~{\n \ref{dfntops}}
coincides with the locally convex vector
topology determined by the family of
seminorms $(\|.\|_{n, q, k})$,
where $n\in\N$, $q\in \Gamma$, and $k\in |r]$.
\end{la}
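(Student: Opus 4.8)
The plan is to show the two locally convex topologies on $C^r(X,E)$ coincide by exhibiting each as continuous with respect to the other, using the fact (already noted in {\bf \ref{dstpomeg}}) that each seminorm $\|\cdot\|_{n,q,k}$ is continuous on $C^r(X,E)$ for the topology of Definition~{\n \ref{dfntops}}. That half gives immediately that the seminorm topology is coarser. For the reverse inclusion, recall from Lemma~\ref{anyatlas} that the topology of Definition~{\n \ref{dfntops}} is initial with respect to the maps $\theta_{\psi_n}\!: C^r(X,E)\to C^r(\wt U_n,F)$, $\sigma\mto\sigma_{\psi_n}$, since $\{\psi_n\!:n\in\N\}$ (or rather the $\psi_{P_n}$ restricted appropriately) is an atlas of local trivializations — here I would invoke Remark~\ref{trivialI} to pass between $C^r(\wt U_n,F)$ and $C^r(\wt V_n,F)$ via the chart $\wt\kappa_n$, which is a $C^r$-diffeomorphism, so the pullback $C^r(\wt V_n,F)\to C^r(\wt U_n,F)$ is a topological isomorphism. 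Thus it suffices to show that $\sigma\mto\sigma_{\psi_n}\circ\wt\kappa_n^{-1}\in C^r(\wt V_n,F)$ is continuous for the seminorm topology, for each $n$.

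First I would fix $n\in\N$ and a continuous seminorm $\nu$ on $C^r(\wt V_n,F)$. By {\bf \ref{unsch}} (i.e. \cite{Glo}, Proposition~4.4), the seminorms $\|\cdot\|_{K,q,k}$ with $K\in\cK(\wt V_n)$, $q\in\Gamma$, $k\in|r]$ determine the topology on $C^r(\wt V_n,F)$, so $\nu\leq C\max_i\|\cdot\|_{K_i,q_i,k_i}$ for finitely many such data; since $\Gamma$ is directed and $|r]$ and the $K_i$ have upper bounds, I may assume a single triple $\nu\leq C\|\cdot\|_{K,q,k}$ with $K\sub\wt V_n$ compact. Now $\|\sigma_{\psi_n}\circ\wt\kappa_n^{-1}\|_{K,q,k}$ involves only the partial derivatives of $\sigma_{\psi_n}\circ\wt\kappa_n^{-1}$ on the compact set $K$; I would like to bound it by $\|\sigma\|_{m,q',k}$ for suitable $m$. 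The point is a change-of-charts estimate: on the overlap $\wt U_n\cap\wt U_m$, the function $\sigma_{\psi_n}$ and $\sigma_{\psi_m}$ differ by the transition cocycle $g_{\psi_n,\psi_m}$, whose action is $C^r$ in $x$; and the two charts differ by a $C^r$-diffeomorphism. Since $\{U_m\}$ covers $X$ and $K$ (pushed into $X$ via $\wt\kappa_n^{-1}$) is compact, finitely many $U_{m_1},\dots,U_{m_\ell}$ cover it, so $\overline{V_n}$ (the closure involved in $\|\cdot\|_{m,q,k}$) and the transition data do all the work.

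The main obstacle — really the only nontrivial point — is this change-of-variables-and-transition estimate: one must control the seminorm $\|\gamma\circ\kappa\|_{K,q,k}$ of a composite, where $\kappa$ is a $C^r$-diffeomorphism between open subsets of $\R^d$ (and $\gamma$ additionally twisted by a matrix-valued $C^r$-function $g(x)$ acting on $F$). By the chain rule (Faà di Bruno), each partial derivative of order $\leq k$ of $x\mapsto g(\kappa(x)).\gamma(\kappa(x))$ is a finite sum of terms, each a product of (i) partial derivatives of $g$ of order $\leq k$ evaluated at $\kappa(x)$, which are bounded on the relevant compact set since $g$ is $C^r$ and $r\geq k$; (ii) partial derivatives of $\kappa$ of order $\leq k$, likewise bounded on a compact set; and (iii) one factor of a partial derivative of $\gamma$ of order $\leq k$ evaluated at $\kappa(x)$. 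Applying $q$ and using its subadditivity and the continuity estimate $q(A.v)\leq\|A\|_{q}\,q(v)$ for the (bounded) operator values (or simply absorbing the scalar coefficients), one gets $\|g\cdot(\gamma\circ\kappa)\|_{K,q,k}\leq C'\,\|\gamma\|_{\kappa(K),q,k}$ with $C'$ depending only on the sup-norms of the derivatives of $g$ and $\kappa$ on the compact sets in play. Summing this over the finite subcover $m_1,\dots,m_\ell$ of $K$, and enlarging $\overline{V_n}$-type compacta and $q$ as needed, yields $\|\sigma_{\psi_n}\circ\wt\kappa_n^{-1}\|_{K,q,k}\leq \sum_{i=1}^\ell C_i\,\|\sigma\|_{m_i,q,k}$, which is exactly the desired continuity of $\theta_{\psi_n}$ (composed with the chart) for the seminorm topology. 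This holds for every $n$, so by Lemma~\ref{anyatlas} the topology of Definition~{\n \ref{dfntops}} is coarser than the seminorm topology; combined with the first paragraph, the two coincide.
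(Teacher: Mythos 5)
Your first half (the seminorms $\|\cdot\|_{n,q,k}$ are continuous, so the seminorm topology is coarser) is exactly the paper's argument. For the converse, however, you have missed the simplification that makes the paper's proof essentially trivial: instead of using the trivializations $\psi_n$ over the large sets $\wt{U}_n$, one applies Lemma~\ref{anyatlas} to the atlas of their \emph{restrictions} over the relatively compact sets $U_n$ (which still cover $X$), i.e.\ to the maps $\theta_n\colon C^r(X,E)\to C^r(V_n,F)$, $\sigma\mto \sigma_{\psi_n}|_{U_n}\circ\kappa_n^{-1}$. Then every compact set $K$ occurring in a defining seminorm of $C^r(V_n,F)$ satisfies $K\sub V_n\sub \wb{V_n}$, so that $\|\theta_n(\sigma)\|_{K,q,k}\leq\|\sigma\|_{n,q,k}$ holds \emph{verbatim}, with the same chart and the same trivialization on both sides: no transition cocycles, no change of charts, no Leibniz or Fa\`a di Bruno estimates are needed at all. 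By working over $\wt{U}_n$ you allow compacta $K\sub\wt{V}_n$ not contained in $\wb{V_n}$, and this is what forces you into the change-of-chart-and-cocycle estimate, which is precisely where your argument develops a gap.

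That estimate, as you sketch it, does not survive infinite-dimensional fibres. The inequality $q(A.v)\leq \|A\|_q\, q(v)$ has no meaning for a continuous operator $A$ on a general locally convex space $F$: there is no operator norm, and in general one only gets $q(A.v)\leq C\,p(v)$ for some \emph{other} continuous seminorm $p$; to obtain such an estimate uniformly for $A=g_{\psi_n,\psi_m}(x)$ with $x$ in a compact set, one must argue via the joint continuity of $G_{\psi_n,\psi_m}$ at points $(x,0)$ together with linearity in $v$ (a finite subcover plus homogeneity argument), not via "boundedness of the operator values". Likewise, "partial derivatives of $g$ of order $\leq k$ are bounded since $g$ is $C^r$" is unjustified: by Remark~\ref{chnge} the operator-valued map $g_{\phi,\psi}$ into $L(F,F)_b$ is only known to be of class $C^{r-2}$, and even boundedness in $L(F,F)_b$ would not yield the seminorm estimates you need. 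The correct way to run your route is to differentiate $x\mto G_{\psi_n,\psi_m}(x,\sigma_{\psi_m}(x))$ using the jointly $C^r$ map $G_{\psi_n,\psi_m}$ (linear in the vector variable) and derive uniform-over-compacta estimates by the equicontinuity argument above — in effect reproving a quantitative version of the continuity part of Proposition~\ref{pushforw1}. So your strategy is repairable, but as written the key step fails, and the restriction-to-$U_n$ device renders all of it unnecessary.
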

\begin{proof}
We already observed that each seminorm
$\|.\|_{n,q,k}$ is continuous;
hence the locally convex vector topology~$\cO$
on $C^r(X,E)$ determined by the family
$(\|.\|_{n,q,k})_{n,q,k}$ is coarser than the given topology.
On the other hand, we know from Lemma~\ref{anyatlas} that
the given topology on $C^r(X,E)$
is initial with respect to
the sequence $(\theta_n)_{n\in \N}$
of linear maps
\[
\theta_n\!: C^r(X,E)\to C^r(V_n,F),\;\;\;
\sigma\mto \sigma_{\psi_n}|_{V_n}\circ \kappa_n^{-1}.
\]
The topology on $C^r(V_n,F)$
is determined by the seminorms
$\|.\|_{K,q,k}$, where $K\in \cK(V_n)$, $q\in \Gamma$,
$k\in |r]$ (see {\bf \ref{unsch}}).
As $\|.\|_{K,q,k}\circ \theta_n\leq \|.\|_{n,q,k}$
pointwise on $C^r(X,E)$,
we easily deduce that the given topology on $C^r(X,E)$
is coarser than~$\cO$. Thus both topologies coincide.
\end{proof}
\begin{defn}\label{defineVs}
Given any sequences $q=(q_n)_{n\in \N}$,
$k=(k_n)_{n\in \N}$,
and $e=(\ve_n)_{n\in\N}$
of seminorms $q_n\in\Gamma$,
natural numbers $k_n\in |r]$,
and positive real numbers $\ve_n>0$, 
we define
\[
\cV(q, k , e):=\{
\sigma \in \cD^r(X,E)\!:\; \mbox{$\|\sigma\|_{n, q_n, k_n}<\ve_n$ for
all $n\in \N$}\,\}.\]
\end{defn}
\begin{prop}\label{expldestopII}
For $(q, k, e)\in \Gamma^\N\times |r]^\N\times
(\R^+)^\N$, the sets
$\cV(q, k , e )$
form a basis for the filter of zero-neighbourhoods
in~$\cD^r(X,E)$. Every $\cV(q, k , e)$
is open in~$\cD^r(X,E)$.
\end{prop}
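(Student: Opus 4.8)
The plan is to show two things: first, that each $\cV(q,k,e)$ is open in $\cD^r(X,E)$; second, that these sets form a basis of zero-neighbourhoods. For openness, I would use the locally convex direct limit structure $\cD^r(X,E)=\dl\, C^{\,r}_{K}(X,E)$ together with the countable cofinal exhaustion from Remark~\ref{indu}: a subset is open in the direct limit iff its trace on each $C^{\,r}_K(X,E)$ is open. Fix a compact $K\sub X$. Since $(\wt{U}_n)_{n\in\N}$ is a locally finite open cover, only finitely many indices $n$ satisfy $\wb{V_n}\cap \wt\kappa_n(K)\neq\emptyset$ in the relevant sense; more precisely, $K$ meets only finitely many $\wb{U_n}$, and for the remaining $n$ the seminorm $\|\sigma\|_{n,q_n,k_n}$ vanishes on $C^{\,r}_K(X,E)$ (because $\sigma_{\psi_n}$ is supported in $K\cap \wt U_n$, which is empty or at least misses a neighbourhood). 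Hence $\cV(q,k,e)\cap C^{\,r}_K(X,E)$ is a finite intersection of the open sets $\{\|\cdot\|_{n,q_n,k_n}<\ve_n\}$, each open by the continuity of $\|\cdot\|_{n,q_n,k_n}$ on $C^r(X,E)$ noted in~{\bf\ref{dstpomeg}}; this finite intersection is an open zero-neighbourhood in $C^{\,r}_K(X,E)$. Therefore $\cV(q,k,e)$ is open in $\cD^r(X,E)$, and it is clearly a zero-neighbourhood since each factor contains $0$.

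\textbf{That the $\cV(q,k,e)$ are a basis.} Since each $\cV(q,k,e)$ is an open zero-neighbourhood, it remains to show that every zero-neighbourhood $W$ in $\cD^r(X,E)$ contains some $\cV(q,k,e)$. Without loss of generality $W$ is convex and balanced; by the defining property of the direct limit topology, $W$ is a zero-neighbourhood iff $W\cap C^{\,r}_{K_m}(X,E)$ is a zero-neighbourhood in $C^{\,r}_{K_m}(X,E)$ for each $m$, where $(K_m)$ is the exhaustion from Remark~\ref{indu}. By Lemma~\ref{expldestop}, the topology on $C^r(X,E)$ — hence the induced topology on $C^{\,r}_{K_m}(X,E)$ — is given by the seminorms $\|\cdot\|_{n,q,k}$; so for each $m$ there are finitely many triples $(n,q,k)$ and an $\eta_m>0$ with $\{\sigma\in C^{\,r}_{K_m}(X,E): \|\sigma\|_{n,q_n^{(m)},k_n^{(m)}}<\eta_m\text{ for these }n\}\sub W$. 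The main obstacle is bookkeeping: I must assemble, from this countable family of finite data, a single sequence $(q,k,e)\in\Gamma^\N\times|r]^\N\times(\R^+)^\N$ with $\cV(q,k,e)\sub W$. For each index $n\in\N$, the set $K_m$ meets $\wb{U_n}$ for all $m\geq$ some $m(n)$; I choose $q_n\in\Gamma$ dominating all the finitely many seminorms attached to index $n$ arising for $m\leq m(n)+1$ (using that $\Gamma$ is directed), $k_n$ the maximum of the corresponding orders, and $\ve_n>0$ small enough to fit under all the finitely many thresholds involved. A convexity argument then shows $\cV(q,k,e)\cap C^{\,r}_{K_m}(X,E)\sub W\cap C^{\,r}_{K_m}(X,E)$ for each $m$: any $\sigma\in\cV(q,k,e)$ with support in $K_m$ satisfies all the finitely many seminorm inequalities defining the $W$-neighbourhood for that $m$, because for each relevant $(n,q,k)$ we have $\|\sigma\|_{n,q,k}\leq\|\sigma\|_{n,q_n,k_n}<\ve_n$ by the domination choices. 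Since this holds for all $m$ and the $K_m$ are cofinal, $\cV(q,k,e)\sub W$, completing the proof.

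\textbf{Where the difficulty lies.} The conceptual content is light — everything reduces to Lemma~\ref{expldestop} plus the direct-limit criterion — but the delicate point is that the seminorms $\|\cdot\|_{n,q,k}$ are indexed by the \emph{fixed} cover $(\wt U_n)$ rather than by arbitrary compact sets, so I must be careful that a single choice of $(q_n,k_n,\ve_n)$ per index $n$ genuinely suffices uniformly over all the exhausting compacta $K_m$. The local finiteness of $(\wt U_n)_{n\in\N}$ is exactly what makes this work: for each $n$ only finitely many steps of the exhaustion ``see'' the index $n$ with a new constraint, so the required domination at index $n$ involves only finitely many seminorms and thresholds, and the directedness of $\Gamma$ together with $|r]$ being totally ordered lets me absorb them into one $q_n$ and one $k_n$. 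I would present the openness part fully and the basis part with the bookkeeping sketched, flagging the finiteness-per-index observation as the crux.
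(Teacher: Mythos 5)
The serious problem is in the basis part, precisely at the point you yourself flag as the crux. Your plan is to extract, for each $m$, finitely many seminorm constraints describing a zero-neighbourhood of $C^{\,r}_{K_m}(X,E)$ inside $W$, and then to define $(q_n,k_n,\ve_n)$ at each index $n$ using only the constraints coming from the finitely many steps $m\leq m(n)+1$. But a fixed index $n$ is relevant for \emph{every} $m$ with $K_m\cap\wb{U}_{\!n}\neq\emptyset$, i.e.\ for all sufficiently large $m$, and the constraint data you choose at step $m$ are in no way canonical or monotone in $m$: nothing prevents the basic neighbourhood you pick inside $W\cap C^{\,r}_{K_m}(X,E)$ for a large $m$ from imposing, at the old index $n$, a threshold much smaller than your $\ve_n$. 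Consequently a $\sigma\in\cV(q,k,e)$ supported in a large $K_m$ need not satisfy the inequalities you recorded for that $m$, and the inclusion $\cV(q,k,e)\cap C^{\,r}_{K_m}(X,E)\sub W$ is not established. This is exactly the difficulty the paper's proof is built to overcome: it takes a $C^r$-partition of unity $(h_n)$ subordinate to $(U_n)$, uses that the topology on $C^{\,r}_{C_n}(X,E)$ with $C_n=\Supp(h_n)\sub U_n$ is determined by the seminorms $\|\cdot\|_{n,q,k}$ at the \emph{single} index $n$ (Lemma~\ref{restrcts} and Lemma~\ref{anyatlas}), so that each $n$ really does receive only one constraint, absorbs the multiplication by $h_n$ via the Leibniz rule, and recombines using the finite decomposition $\sigma=\sum_n h_n\cdot\sigma$ together with the summable weights $\ve/2^n$ (formally, this is packaged as a verification of the universal property of the locally convex direct limit for the topology generated by the sets $\cV(q,k,e)$). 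Some such localization step is indispensable; the per-index finiteness you rely on does not follow from local finiteness of $(\wt{U}_n)$ alone.

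A smaller point: your openness argument invokes ``a subset is open in the direct limit iff its trace on each $C^{\,r}_K(X,E)$ is open.'' For the locally convex direct limit this is false in general; the final topology with respect to the inclusions is strictly finer (for spaces like $\cD(\R)$ it is not even a vector topology). It is, however, correct for \emph{convex} sets: a convex set containing $0$ whose trace on each $C^{\,r}_K(X,E)$ is a zero-neighbourhood is a zero-neighbourhood, and openness at an arbitrary $\tau\in\cV(q,k,e)$ then follows by translating by $\tau$, whose support lies in some compact set. Since each $\cV(q,k,e)$ is convex, this part of your argument is repairable once convexity is invoked explicitly; but the basis part needs a partition-of-unity (or equivalent localization) argument to close the real gap.
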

\begin{proof}
Using that $(\wt{U}_n)_{n\in \N}$ is a locally finite
open cover of~$X$,
we easily
see that the sets $\cV( q , k , e )$
form a basis of convex, open zero-neighbourhoods
for a locally convex Hausdorff vector topology~$\cO$
on $\cD^r(X,E)$, which makes the inclusion maps
$C^{\, r}_K(X,E)\to\cD^r(X,E)$ embeddings
of topological vector spaces in view of Lemma~\ref{expldestop}.
We show that $(\cD^r(X,E),\cO)$
is the locally convex direct limit
of its subspaces $C^{\,r}_K(X,E)$,
by verifying the universal property.
Thus, suppose that $(\lambda_K)_{K\in\cK(X)}$
is a family of continuous
linear maps $\lambda_K\!: C^{\, r}_K(X,E)\to Y$
into a locally convex space~$Y$ such that
$\lambda_C|_{C^{\,r}_K(X,E)}=\lambda_K$ for all $K,C\in \cK(X)$
such that~$K\sub C$.
As $\cD^r(X,E)=\dl\, C^{\, r}_K(X,E)$
in the category of vector spaces,
there is a unique linear map $\lambda\!: \cD^r(X,E)\to Y$
such that $\lambda|_{C^{\, r}_K(X,E)}=\lambda_K$ for all~$K$.
It only remains to show that~$\lambda$
is continuous. To this end, let a continuous
seminorm~$p$ on~$Y$ and $\ve>0$ be given.
We let $(h_n)_{n\in \N}$
be a $C^r$-partition of unity
subordinate to the open covering $(U_n)_{n\in \N}$,
(\cite{Lan}, Chapter~II, \S3, Corollary~3.4).
Thus $0\leq h_n\leq 1$, $C_n:=\Supp(h_n)\sub U_n$,
and $\sum_{n=1}^\infty h_n = 1$.
The topology on
$C^{\, r}_{C_n}(X,E)$ is determined by the directed family of
seminorms $(\|.\|_{n,q,k})_{q\in\Gamma, k\in |r]}$
(restricted to that space),
as a consequence of Lemma~\ref{restrcts}
and Lemma~\ref{anyatlas}.
Now $\lambda_{C_n}$ being continuous,
we deduce that there exists $q_n\in \Gamma$, $k_n\in |r]$ and
$\delta_n>0$ such that $p(\lambda_{C_n}(\sigma))<\frac{\ve}{2^n}$
for all $\sigma\in C^{\, r}_{C_n}(X,E)$ such that
$\|\sigma\|_{n,q_n,k_n}<\delta_n$.
Using the Leibniz Rule,
we find $\ve_n>0$
such that $\|(h_n\circ \wt{\kappa}_n^{-1})\cdot
\gamma\|_{\wb{V_n},q_n,k_n}<\delta_n$
for all $\gamma\in C^r(\wt{V}_n,F)$
such that $\|\gamma\|_{\wb{V_n},q_n,k_n}<\ve_n$
(see \cite{Glo}, proof of Proposition~4.8).
Thus $\|h_n\cdot \sigma\|_{n,q_n,k_n}<\delta_n$
for all $\sigma\in C^r(X,E)$ such that $\|\sigma\|_{n,q_n,k_n}<\ve_n$.
We set $q:=(q_n)_{n\in\N}$,
$k:=(k_n)_{n\in \N}$,
and $e:=(\ve_n)_{n\in \N}$.
Then $p(\lambda(\sigma))<\ve$ for all
$\sigma\in \cV(q,k,e)$.
In fact, we can find $N\in \N$ such that $U_n\cap \Supp(\sigma)=\emptyset$
for all $n>N$. Then
$\lambda(\sigma)=\sum_{n=1}^N \lambda_{C_n}(h_n\cdot \sigma)$,
and thus $p(\lambda(\sigma))\leq \sum_{n=1}^N \frac{\ve}{2^n}<\ve$.
\end{proof}
\begin{center}
{\bf The Main Result}
\end{center}
\begin{thm}\label{pushforw3}
Suppose that $\pi_1\!: E_1\to X$ and
$\pi_2\!: E_2\to X$ are $C^r$-vector bundles
over the same $\sigma$-compact, finite-dimensional base~$X$,
with typical fibres $F_1$,
resp., $F_2$.
Let $k\in \N_0\cup\{\infty\}$, $U$ be an $\R$-balanced open
neighbourhood of $0_X$ in~$E_1$,
and suppose that $f\!: U\to E_2$ is a bundle
map such that
\begin{itemize}
\item[\n (a)]
$f(0_x)=0_x$ for all $x\in X$,
\item[\n (b)]
$f$ is partially $C^k$ in the vector variable, and
\item[\n (c)]
The functions $\delta^jf$ are of
class $C^r$, for every $j\in |k]$.
\end{itemize}
Then
\[
\cD^r(X,f)\!: \cD^r(X,U)\to \cD^r(X,E_2),\;\;\;
\sigma \mto f\circ \sigma
\]
is a mapping of class~$C^k$.
\end{thm}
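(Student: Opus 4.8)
The plan is to argue by induction on $k$ and — as in the proof of Proposition~\ref{pushforw1} — to reduce first to the case $k<\infty$. I would prove the sharper statement (the $\cD^r$-analogue of Theorem~\ref{pushforwK}) that $\cD^r(X,f)$ is of class~$C^k$ with $d^j\cD^r(X,f)=\cD^r(X,\delta^jf)$ for all $j\in|k]$, under the identification $\cD^r(X,E_1)^{2^j}\isom\cD^r(X,E_1^{2^j})$. Almost all of the weight falls on the base case $k=0$, i.e.\ on continuity of $\cD^r(X,f)$: this is the step where one must pass from the individual spaces $C^{\,r}_K(X,E_1)$ — on which Theorem~\ref{pushforwK} already does the job — to the locally convex direct limit $\cD^r(X,E_1)$, for which one needs the explicit description of the topology on~$\cD^r$.

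For the base case I would fix, as in Section~\ref{sec5}, a locally finite open cover $(\wt U_n)_{n\in\N}$ of~$X$ with charts $\wt\kappa_n$ and relatively compact open subsets $U_n\sub\wt U_n$ covering~$X$, refined so that both $E_1$ and $E_2$ are trivial over each $\wt U_n$ (via local trivializations $\psi_n$ and $\phi_n$); a $C^r$-partition of unity $(h_n)$ subordinate to $(U_n)$ with $C_n:=\Supp h_n\sub U_n$; and $C^r$-functions $H_n\!:X\to\R$, $0\le H_n\le1$, with $H_n\equiv1$ on a neighbourhood of~$C_n$ and $K_n:=\Supp H_n$ a compact subset of~$U_n$. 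Because $U$ is $\R$-balanced we have $H_n\cdot\sigma\in\cD^r(X,U)$ whenever $\sigma\in\cD^r(X,U)$, and since $f(0_x)=0_x$ and $H_n\equiv1$ on~$C_n$ one obtains the pointwise identity $f\circ\sigma=\sum_n h_n\cdot\bigl(f\circ(H_n\cdot\sigma)\bigr)$, with only finitely many nonzero summands for each fixed~$\sigma$. Setting $\Lambda_n(\sigma):=h_n\cdot(f\circ(H_n\cdot\sigma))$, the map $\Lambda_n\!:\cD^r(X,U)\to\cD^r(X,E_2)$ factors as $\iota_n\circ m_{h_n}\circ C^{\,r}_{K_n}(X,f)\circ m_{H_n}$, where $m_{H_n}\!:\cD^r(X,U)\to C^{\,r}_{K_n}(X,U)$ and $m_{h_n}\!:C^{\,r}_{K_n}(X,E_2)\to C^{\,r}_{C_n}(X,E_2)$ are continuous linear (Corollary~\ref{contmlt}), $C^{\,r}_{K_n}(X,f)$ is of class~$C^k$ by Theorem~\ref{pushforwK}, and $\iota_n\!:C^{\,r}_{C_n}(X,E_2)\emb\cD^r(X,E_2)$; hence each $\Lambda_n$ is of class~$C^k$, $\Lambda_n(\sigma)$ is supported in~$C_n$, and $\Lambda_n(\sigma)$ depends on~$\sigma$ only through $\sigma_{\psi_n}$ on a neighbourhood of~$K_n$.

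The crux is then the following neighbourhood estimate. Let $\cV(q,\ell,e)$ be a basic zero-neighbourhood in $\cD^r(X,E_2)$ (Proposition~\ref{expldestopII}; $q=(q_n)$, $\ell=(\ell_n)$, $e=(\ve_n)$) and $\tau\in\cD^r(X,U)$. Fix~$n$. Only the summands $\Lambda_m$ with $m\in M_n:=\{m\!:C_m\cap\wb U_n\neq\emptyset\}$ contribute to $\|\,\sbull\,\|_{n,q_n,\ell_n}$, and $M_n$ is finite by local finiteness of $(\wt U_n)$. For $m\in M_n$, writing $\Lambda_m(\sigma)-\Lambda_m(\tau)$ in the trivialization $\phi_n$ over $\wb{V_n}$ by means of the transition function $g_{\phi_n,\phi_m}$ (of class~$C^r$ on $\wt U_n\cap\wt U_m$) and applying the Leibniz rule yields a bound $\|\Lambda_m(\sigma)-\Lambda_m(\tau)\|_{n,q_n,\ell_n}\le P_{m,n}\cdot\Psi_{m,n}(\sigma)$, where $P_{m,n}$ is a constant and $\Psi_{m,n}\!:\cD^r(X,U)\to[0,\infty[$ is continuous (by Theorem~\ref{pushforwK}), vanishes at~$\tau$, and can be bounded in terms of $\|\sigma-\tau\|_{m,q',\ell_n}$ alone for a suitable $q'\in\Gamma$ (as $\Psi_{m,n}(\sigma)$ depends on~$\sigma$ only near~$K_m$). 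Since $\{n\!:m\in M_n\}$ is likewise finite for each~$m$, one may choose, for every index~$\nu$, a single $q^*_\nu\in\Gamma$, $\ell^*_\nu\in|r]$ and $\ve^*_\nu>0$ dominating the finitely many requirements arising from the pairs $(\nu,n)$ with $\nu\in M_n$; then $\tau+\cV(q^*,\ell^*,e^*)$ is a genuine neighbourhood of~$\tau$, and on it one reads off $\|f\circ\sigma-f\circ\tau\|_{n,q_n,\ell_n}\le\sum_{m\in M_n}P_{m,n}\Psi_{m,n}(\sigma)<\ve_n$ for every~$n$, i.e.\ $\cD^r(X,f)(\sigma)-\cD^r(X,f)(\tau)\in\cV(q,\ell,e)$. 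Hence $\cD^r(X,f)$ is continuous. This assembly of local data into one neighbourhood — which rests on both directions of local finiteness of the cover — is the main obstacle; the accompanying Leibniz-rule and transition-function bookkeeping is routine but has to be carried out with care.

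For the induction step, fix $\sigma,\eta\in\cD^r(X,U\oplus E_1)\isom\cD^r(X,U)\times\cD^r(X,E_1)$ (note $U\oplus E_1$ is an $\R$-balanced open neighbourhood of the zero-section, so $\cD^r(X,U\oplus E_1)$ is open by Lemma~\ref{thingsopen}). For $|h|$ small, $\sigma+h\eta\in\cD^r(X,U)$, and since $m_{H_m}$ is linear, $\tfrac1h\bigl(\Lambda_m(\sigma+h\eta)-\Lambda_m(\sigma)\bigr)=\iota_m\circ m_{h_m}\bigl(\tfrac1h\bigl[C^{\,r}_{K_m}(X,f)(m_{H_m}\sigma+h\,m_{H_m}\eta)-C^{\,r}_{K_m}(X,f)(m_{H_m}\sigma)\bigr]\bigr)$ tends, as $h\to0$, to $\iota_m\circ m_{h_m}\bigl(C^{\,r}_{K_m}(X,\delta f)(m_{H_m}\sigma,m_{H_m}\eta)\bigr)$ by Theorem~\ref{pushforwK}; only finitely many~$m$ give nonzero summands for small~$h$, so $\tfrac1h(f\circ(\sigma+h\eta)-f\circ\sigma)\to\sum_m h_m\cdot(\delta f\circ(H_m\sigma,H_m\eta))=\delta f\circ(\sigma,\eta)=\cD^r(X,\delta f)(\sigma,\eta)$ in $\cD^r(X,E_2)$ (using $H_m\equiv1$ on~$C_m$). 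Thus $d\,\cD^r(X,f)=\cD^r(X,\delta f)$. Now $\delta f\!:U\oplus E_1\to E_2$ is a bundle map on an $\R$-balanced open neighbourhood of the zero-section satisfying (a)--(c) with~$k$ replaced by $k-1$, so by the induction hypothesis $\cD^r(X,\delta f)$ is of class~$C^{k-1}$ with $d^j\cD^r(X,\delta f)=\cD^r(X,\delta^{j+1}f)$; hence $\cD^r(X,f)$ is of class~$C^1$, and therefore of class~$C^k$ with $d^j\cD^r(X,f)=\cD^r(X,\delta^jf)$ as claimed.
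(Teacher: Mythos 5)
Your proposal is correct and follows essentially the same route as the paper's proof: reduce to the continuity case $k=0$ by noting (via Theorem~\ref{pushforwK}) that difference quotients stay inside a fixed space $C^{\,r}_K(X,E_2)$ on which $\cD^r(X,E_2)$ induces the usual topology, and then prove continuity at a given $\tau$ using the explicit basis $\cV(q,k,e)$ of Proposition~\ref{expldestopII}, cutoff functions, continuity of the $C^{\,r}_K$-pushforwards expressed through single-chart seminorms, the Leibniz rule, and the two finiteness properties of the locally finite cover ($M_n$ finite and $N_m$ finite). The only deviations are organizational and harmless: you decompose $f\circ\sigma=\sum_m h_m\cdot(f\circ(H_m\cdot\sigma))$ and induct on $k$, whereas the paper multiplies, for each seminorm index $n$, by a single cutoff $h_n\equiv 1$ near $\wb{U}_{\!n}$ and obtains all differentials directly; also, $g_{\phi_n,\phi_m}$ is only automatically of class $C^{r-2}$ (Remark~\ref{chnge}), but this does not matter since your estimate only needs continuity of the relevant multiplication operators (equivalently of $(G_{\phi_n,\phi_m})_*$), which holds.
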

\begin{proof}
Note that if $k>0$,
then for $\sigma\in \cD^r(X,U)$ and $\tau\in \cD^r(X,E_1)$,
we have
\[
h^{-1}(f\circ (\sigma +h \tau)-f\circ
\sigma) \in C^{\,r}_K(X,U)
\]
for all $0\not= h\in \R$ sufficiently small,
where $K:=\Supp(\sigma)\cup\Supp(\tau)$.
Since $\cD^r(X,E_1)$
induces the usual
topology on $C^{\,r}_K(X,E_1)$,
we deduce from Theorem~\ref{pushforwK}
that $d(\cD^r(X,f))\,(\sigma,\tau)=d(C^{\,r}_K(X,f))\,(\sigma,\tau)$
exists.
Repeating this argument, we find
that
$d^j\cD^r(X,f)$ exists for all $j\in |k]$,
and is given by $d^j\cD^r(X,f)=\cD^r(X,\delta^j f)$,
where we identify $\cD^r(X,U)\times
\cD^r(X,E_1)^{2^j-1}$ with $\cD^r(X,U\oplus E_1^{2^j-1})$.
As the mappings $\cD^r(X,\delta^j f)$ are of the form described in the
lemma,
it suffices to show that $\cD^r(X,f)$ is a
continuous map, for each~$f$ as above.
In other words, it suffices to prove the theorem
when $k=0$, which we assume now.\vspace{3 mm}\\
For $i\in \{1,2\}$,
we let $\Gamma_i$ be a directed
set of continuous seminorms
defining the locally convex vector topology on~$F_i$,
and define seminorms $\|.\|_{n,q,k}$ on $C^r(X,E_i)$
for $n\in \N$, $q\in \Gamma_i$, $k\in |r]$
as well as an open basis of zero-neighbourhoods
$\cV_i(q,k,e)$ for $\cD^r(X,E_i)$
(where $q\in \Gamma_i^\N$,
$k \in |r]^\N$, $e\in (\R^+)^\N$)
as described in {\bf \ref{dstpalph}}\,--\,{\bf \ref{defineVs}},
where in {\bf \ref{dstpalph}},
we assume now that local trivializations
$\psi^i_P\!: \pi^{-1}_i(P)\to P\times F_i$
exist for $i\in \{1,2\}$, and choose
the same sequence $(\wt{U}_n)$ for both
$i=1$ and $i=2$, and the same sequence~$(U_n)$.

Fix $\tau\in \cD^r(X,U)$;
our goal is to show that $\cD^r(X,f)$ is continuous
at $\tau$. To this end, let
$p=(p_n)\in \Gamma_2^\N$,
$\ell=(\ell_n)\in |r]^\N$, and $e=(\ve_n)\in (\R^+)^\N$
be given; we have to find a neighbourhood~$W$
of~$\tau$ in $\cD^r(X,U)$ such that
$\|f\circ \tau -f\circ \sigma\|_{n,p_n,\ell_n}<\ve_n$
for all $\sigma\in W$ and $n\in \N$.
We choose
$C^r$-functions $h_n\!: X\to \R$
such that $\im(h_n)\sub [0,1]$,
$\Supp(h_n)\sub \wt{U}_n$,
and such that $h_n$ is constantly~$1$
on some neighbourhood of~$\wb{U}_{\!n}$.
This is possible as~$\wb{U}_{\!n}$
is a compact subset of~$\wt{U}_n$.
For each~$n$, define
\[
M_n:=\{m\in \N\!:
\Supp(h_n)\cap U_m\not=\emptyset\}.
\]
Then $M_n$ is a finite set,
and $\Supp(h_n)\sub \bigcup_{m\in M_n} U_m$.
Now $C^{\,r}_{\sSup(h_n)}(X,f)\!: C^{\, r}_{\sSup(h_n)}(X,U)\to
C^{\, r}_{\sSup(h_n)}(X,E_2)$
being continuous
by Theorem~\ref{pushforwK},
we easily deduce from Lemma~\ref{expldestop}
that there exists
$s_n\in \Gamma_1$, $j_n\in |r]$,
and $\beta_n>0$ such that
\begin{equation}\label{tk}
\|f\circ \sigma-f\circ (h_n\cdot \tau)\|_{n,p_n,\ell_n}<\ve_n
\end{equation}
for all $\sigma \in C^{\,r}_{\sSup(h_n)}(X,U)$
such that $\|\sigma- h_n\cdot \tau\|_{m,s_n,j_n}<\beta_n$
for all $m\in M_n$.\footnote{Note that $\im(h_n\cdot \tau)\sub U$
here as $\im(h_n)\sub [0,1]$ and $U$ is $\R$-balanced.
Also note that $\|\sigma\|_{m,s,j}=0$
if\linebreak
\hspace*{5.5 mm} $\sigma\in C^{\,r}_{\sSup(h_n)}(X,E_1)$
and $m\in \N\,\take\, M_n$, for all
$s\in \Gamma_1$ and
$j\in |r]$.}
Now, using the Leibniz Rule as in the proof
of Proposition~\ref{expldestopII}, we find real numbers
$\rho_n>0$ such that, for all $m\in M_n$, we have
$\|h_n\cdot \sigma\|_{m,s_n,j_n}<\beta_n$
for all $\sigma\in C^r(X,E)$
such that $\|\sigma\|_{m,s_n,j_n}<\rho_n$.
Note that since $(\wt{U}_n)_{n\in\N}$
is a locally finite covering of~$X$,
and $\Supp(h_n)\sub \wt{U}_n$,
the sets $N_m:=\{n\in \N\!: m\in M_n\}$
are finite, for every $m\in \N$ (N.B. $\wb{U}_{\!m}$ is compact).
We set $\delta_m:=\min_{n\in N_m}\rho_n$,
$k_m:=\max_{n\in N_m}j_n$,
and choose a seminorm $q_m\in \Gamma_1$
such that $q_m\geq s_n$ for all
$n\in N_m$, which is possible as~$\Gamma_1$ is
directed. We set $q:=(q_n)_{n\in \N}$,
$k:=(k_n)_{n\in \N}$, and $\delta:=(\delta_n)_{n\in \N}$.
Then, given any
$\sigma\in W:=(\tau+\cV_1(q,k,\delta))\cap \cD^r(X,U)$,
for all $n\in \N$ we have
\[
\|f\circ \sigma-f\circ \tau\|_{n,p_n,\ell_n}=
\|f\circ (h_n\cdot \sigma)-f\circ (h_n\cdot \tau)\|_{n,p_n,\ell_n}<\ve_n
\]
by Equation\,(\ref{tk}),
since $\|h_n\cdot \sigma- h_n\cdot \tau\|_{m, s_n,j_n}
=\|h_n\cdot (\sigma-\tau)\|_{m,s_n,j_n}<\beta_n$
as $\|\sigma-\tau\|_{m,s_n,j_n}\leq
\|\sigma-\tau\|_{m,q_m,k_m}<\delta_m\leq \rho_n$
for each $m\in M_n$.
Thus $\cD^r(X,f)$ is continuous at~$\tau$.
\end{proof}
An analogue of Theorem~\ref{pushforw3}
in the setting of convenient differential
calculus, for fibre preserving mappings
of class~$c^\infty$ between smooth bundles,
was already established in~\cite{KaM},
Corollary~30.10.
The main novel features of
Theorem~\ref{pushforw3} are,
first and foremost, that it ensures
continuous differentiability of pushforwards
in a much stronger sense than being a
$c^\infty$-map in the sense of
convenient differential calculus (when $k=\infty$).
In particular, it shows that pushforwards
are continuous.
Secondly, the theorem extends beyond
the case of smooth sections,
and addresses pushforwards of $C^r$-sections,
at no extra cost. The techniques applied
in the proof generalize from pushforwards to so-called ``almost local''
mappings between open subsets of spaces of compactly
supported sections; such mappings are encountered
in connection with diffeomorphism groups
of non-compact manifolds~\cite{LOC}.
\section{Analytic mappings between spaces of sections}\label{sec6}
In this section, we describe conditions ensuring
analyticity of pushforwards.
\begin{prop}\label{propcxana}
Let $\pi_j\!:E_j\to X$ be $C^r$-vector
bundles,
whose typical fibres $F_j$ are
complex locally convex spaces,
over the same finite-dimensional, $\sigma$-compact base~$X$,
for $j\in\{1,2\}$.
Suppose that $f\!:U\to E_2$
is a bundle map, defined on an open $\R$-balanced
neighbourhood~$U$ of~$0_X$ in~$E_1$,
such that
\begin{itemize}
\item[\n (a)]
$f$ is partially $C^\infty$ in the vector variable;
\item[\n (b)]
$\delta^jf$ is of class $C^r$, for all $j\in \N_0$;
\item[\n (c)]
For every local trivialization $\psi$ of $E_1$
and $\phi$ of $E_2$,
the mapping
\[
d_2f_{\phi,\psi}= (\delta f)_{\phi,\psi\oplus\psi}\!:
U_{\phi,\psi}\times F_1\to F_2
\]
$($see Definition~{\n \ref{defnparvect})} is complex linear in~$F_1$.
\end{itemize}
Then $\cD^r(X,f)\!:\cD^r(X,U)\to \cD^r(X,E_2)$
is complex analytic, and so is the mapping
$C^{\,r}_K(X,f)\!: C^{\,r}_K(X,U)\to C^{\,r}_K(X,E_2)$,
for every compact subset~$K$ of~$X$.
\end{prop}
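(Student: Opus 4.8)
The plan is to reduce the complex-analyticity of the pushforward to the already-established smoothness results together with the characterization of complex analyticity recalled in {\bf \ref{dfcx}}: a map between (open subsets of) complex locally convex spaces is complex analytic if and only if it is smooth and its first differential is complex linear in the increment variable. First I would invoke Theorem~\ref{pushforw3} to conclude that $\cD^r(X,f)$ is smooth (of class $C^\infty$), since hypotheses (a) and (b) are exactly the hypotheses of that theorem for every $k\in\N_0$; likewise Theorem~\ref{pushforwK} gives smoothness of $C^{\,r}_K(X,f)$ for every compact $K\sub X$. Moreover, those theorems identify the first differential as a pushforward itself: $d\,\cD^r(X,f)=\cD^r(X,\delta f)$ and $d\,C^{\,r}_K(X,f)=C^{\,r}_K(X,\delta f)$, under the identifications $\cD^r(X,U)\times\cD^r(X,E_1)\isom\cD^r(X,U\oplus E_1)$ and similarly for $C^{\,r}_K$.

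Next I would unwind what hypothesis (c) says about $\delta f$. The bundle map $\delta f\!:U\oplus E_1\to E_2$ is, in a local trivialization $\psi$ of $E_1$ and $\phi$ of $E_2$, given by $(\delta f)_{\phi,\psi\oplus\psi}=d_2 f_{\phi,\psi}\!:U_{\phi,\psi}\times F_1\to F_2$ by equation (\ref{chreqn}); hypothesis (c) asserts precisely that this map is complex linear in the second $F_1$-variable (the "increment" slot), for every pair of trivializations. Now fix $\sigma\in\cD^r(X,U)$ and consider $d(\cD^r(X,f))(\sigma,\sbull)\!:\cD^r(X,E_1)\to\cD^r(X,E_2)$, which equals $\tau\mto\cD^r(X,\delta f)(\sigma,\tau)=\delta f\circ(\sigma,\tau)$. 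To check complex linearity in $\tau$ it suffices, by Lemma~\ref{famgivessec} and Lemma~\ref{anyatlas} (the topology on $\cD^r(X,E_2)$ being initial with respect to the maps $\theta_\phi$), to check that $\tau\mto(\delta f\circ(\sigma,\tau))_{\phi}$ is complex linear for each $\phi$ in an atlas, and that is an immediate consequence of (c) after restricting to a trivializing chart and reading off $(\delta f)_{\phi,\psi\oplus\psi}$ as above. Hence $d(\cD^r(X,f))(\sigma,\sbull)$ is complex linear, and by {\bf \ref{dfcx}} (combined with smoothness) $\cD^r(X,f)$ is complex analytic. The identical argument with $C^{\,r}_K(X,f)$ in place of $\cD^r(X,f)$, using Theorem~\ref{pushforwK} and Proposition~\ref{inittop}, gives complex analyticity of $C^{\,r}_K(X,f)$.

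The one point requiring a little care — and the place I expect the only real friction — is the bookkeeping in passing between the global pushforward of sections and its local coordinate expression: one must be sure that the identification of $d\,\cD^r(X,f)$ with $\cD^r(X,\delta f)$ is compatible with the trivialization-wise identity $(\delta f)_{\phi,\psi\oplus\psi}=d_2 f_{\phi,\psi}$, so that hypothesis (c) really does translate into complex linearity of the increment-slot of the section-level differential. This is exactly the content of equation (\ref{chreqn}) and the direct-limit/initial-topology descriptions already proved, so no genuinely new work is needed; it is a matter of citing Theorems~\ref{pushforwK} and~\ref{pushforw3}, Lemmas~\ref{famgivessec}, \ref{anyatlas}, \ref{restrcts}, Proposition~\ref{inittop}, and {\bf \ref{dfcx}} in the right order. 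Everything else — smoothness, continuity of the relevant linear maps $\theta_\phi$, the direct-limit structure of $\cD^r(X,E_i)$ — has been set up in the preceding sections and can be quoted verbatim.
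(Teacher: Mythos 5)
Your proposal is correct and follows essentially the same route as the paper: smoothness of $\cD^r(X,f)$ and $C^{\,r}_K(X,f)$ from Theorems~\ref{pushforw3} and~\ref{pushforwK}, complex linearity of the differentials $d\,\cD^r(X,f)(\sigma,\sbull)$ via hypothesis~(c) and the identification with $\cD^r(X,\delta f)$, and then the criterion of {\bf \ref{dfcx}} (smooth with complex-linear differentials implies complex analytic). The paper merely states the linearity step as an immediate consequence of~(c), whereas you spell out the trivialization-wise verification; this is a harmless elaboration, not a different argument.
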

\begin{proof}
By Theorem~\ref{pushforwK}
and Theorem~\ref{pushforw3},
the mappings $C^{\,r}_K(X,f)$ and $\cD^r(X,f)$ are
smooth.
For each $\sigma\in C^{\,r}_K(X,U)$,
the differential
$C^{\,r}_K(X,f)(\sigma, \sbull)\!: C^{\,r}_K(X,E_1)\to C^{\,r}_K(X,E_2)$
is complex linear,
as a consequence of Hypothesis\,(c).
Similarly, $d\cD^r(X,f)(\sigma,\sbull)$ is complex linear
for each $\sigma\in \cD^r(X,U)$.
Now \cite{RES}, Lemma~2.5 shows that $C^{\,r}_K(X,f)$
and $\cD^r(X,f)$ are complex analytic mappings.
\end{proof}
As regards real analytic mappings, we leave
the general framework of spaces of sections
and content
ourselves with a result establishing
real analyticity for suitable mappings
between function spaces.
The following proposition,
designed for application in Section~\ref{secmapgps},
generalizes \cite{Glo},
Corollary~4.17.\vspace{3 mm}\\
Given a $\sigma$-compact, finite-dimensional manifold
$X$, locally convex space~$E$,
and open $\R$-balanced
neighbourhood $U$ of the zero-section
$X\times \{0\}$
in the trivial bundle $X\times E$,
we define $\cD^r(X,U):=\{\gamma\in \cD^r(X,E)\!:
\,\im(\id_X\times \gamma)\sub U\}$.
As a consequence of
Lemma~\ref{anyatlas} and Lemma~\ref{thingsopen},
$\cD^r(X,U)$ is an open zero-neighbourhood
in $\cD^r(X,E)$.
\begin{prop}\label{proprealana}
Suppose that $E$ and $F$ are real locally convex spaces,
$X$ is a $\sigma$-compact finite-dimensional $C^r$-manifold,
$U$ an open $\R$-balanced neighbourhood of
$X\times\{0\}$ in $X\times E$,
$P$ a real analytic manifold modelled
over some locally convex space~$Z$,
$\wt{U}$ an open subset of $P\times E$,
and $\gamma\!: X\to P$ a $C^r$-mapping
such that $(\gamma\times \id_E)(U)\sub \wt{U}$.
Let $\tilde{f}\!: \wt{U}\to F$ be a real analytic
mapping.
We define $f:=\tilde{f}\circ (\gamma\times \id_E)|_U^{\wt{U}}$,
and assume that $f(x,0)=0$
for all $x\in X$.
Then
\[
f_*\!: \cD^r(X,U)\to \cD^r(X,F),\;\;\;
\gamma\mto f\circ (\id_X,\gamma)
\]
is real analytic on $\cD^r(X,Q)$
for some open, $\R$-balanced neighbourhood~$Q$
of $X\times \{0\}$ in~$U$.
\end{prop}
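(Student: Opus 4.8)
The plan is to reduce real analyticity of $f_*$ to complex analyticity of a pushforward via complexification, exactly in the spirit of Proposition~\ref{propcxana}, and then invoke that proposition. Recall (Definition~{\n \ref{defnran}}) that a mapping between real locally convex spaces is $\R$-analytic iff it extends to a complex analytic mapping on an open neighbourhood in the complexifications. So the first step is to complexify the data. Since $\tilde f\colon \wt U\to F$ is real analytic, it extends to a complex analytic map $\tilde f_\C\colon \wt U_\C\to F_\C$ on some open neighbourhood $\wt U_\C$ of $\wt U$ in $P_\C\times E_\C$ (here $P_\C$ is a complexification of the real analytic manifold $P$, obtained by complexifying charts). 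Likewise the $C^r$-map $\gamma\colon X\to P$ composes into $P_\C$, and $\id_E$ complexifies to $\id_{E_\C}$, so after shrinking we obtain an open $\R$-balanced neighbourhood $Q$ of $X\times\{0\}$ in $U$ and an open neighbourhood $Q_\C$ of $X\times\{0\}$ in $X\times E_\C$ with $(\gamma\times\id_{E_\C})(Q_\C)\subseteq\wt U_\C$; define $g:=\tilde f_\C\circ(\gamma\times\id_{E_\C})|_{Q_\C}$, a bundle map on the (complexified) trivial bundle $X\times E_\C\to X$ into $X\times F_\C$.

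\textbf{Verifying the hypotheses of Proposition~\ref{propcxana} for $g$.} The key point is that $g$, as a function of the second (now complex) variable, is complex analytic: it is a composition of the complex analytic $\tilde f_\C$ with the map $v\mapsto(\gamma(x),v)$, which is complex affine (indeed complex linear up to the fixed point $\gamma(x)$) in $v$; by {\n \ref{complcomp}} composites of complex analytic maps are complex analytic, so $g(x,\cdot)$ is $\C$-analytic, hence smooth with $d_2 g(x,\cdot)$ complex linear (Hypothesis~(c) of Proposition~\ref{propcxana}). Partial smoothness in the vector variable and the fact that $\delta^j g$ is of class $C^r$ follow because $X$ appears only through the $C^r$-map $\gamma$ and through the $C^r$-manifold structure, while $\tilde f_\C$ contributes only smooth (indeed analytic) dependence on the $E_\C$-variable; these are routine verifications using the chain rule and {\bf \ref{pt1.3}}. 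Also $g(x,0)=\tilde f_\C(\gamma(x),0)=f(x,0)=0$ since $f(x,0)=0$ by hypothesis and $\tilde f_\C$ extends $\tilde f$. Thus Proposition~\ref{propcxana} applies (with $E_1=X\times E_\C$, $E_2=X\times F_\C$), yielding that $\cD^r(X,g)\colon\cD^r(X,Q_\C)\to\cD^r(X,F_\C)$ is complex analytic.

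\textbf{Concluding.} It remains to identify $\cD^r(X,g)$, restricted appropriately, as a complex analytic extension of $f_*$ on $\cD^r(X,Q)$. Here one uses the natural isomorphism $\cD^r(X,X\times E_\C)\cong\cD^r(X,E)_\C\cong\cD^r(X,E)\oplus i\,\cD^r(X,E)$ (the pushforward construction commutes with complexification of the fibre, which one checks directly on the seminorms $\|\cdot\|_{n,q,k}$ of {\bf \ref{dstpomeg}}, since complexification is compatible with the locally convex direct limit defining $\cD^r$), and similarly for $F$. Under this identification $\cD^r(X,Q_\C)$ is an open neighbourhood of $\cD^r(X,Q)$ in $\cD^r(X,E)_\C$, and $\cD^r(X,g)$ restricts on $\cD^r(X,Q)$ to $\gamma\mapsto \tilde f_\C\circ(\gamma\times\gamma)(\id_X,\gamma)=\tilde f\circ(\gamma\times\id_E)(\id_X,\gamma)=f\circ(\id_X,\gamma)=f_*(\gamma)$. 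Hence $f_*$ extends to the complex analytic map $\cD^r(X,g)$ on a neighbourhood of $\cD^r(X,Q)$ in the complexification, which by definition means $f_*$ is real analytic on $\cD^r(X,Q)$.

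\textbf{The main obstacle.} The delicate part is not the differential-calculus bookkeeping but the complexification of the base-dependent data: one must produce the complexification $\wt U_\C$ of $\wt U\subseteq P\times E$ together with the complexification $P_\C$ of the abstract real analytic manifold $P$, and then shrink $U$ to an $\R$-balanced $Q$ so that $(\gamma\times\id_{E_\C})$ maps a full $E_\C$-neighbourhood $Q_\C$ of the zero-section into $\wt U_\C$ --- the $\R$-balancedness being needed for Lemma~\ref{thingsopen} to guarantee $\cD^r(X,Q_\C)$ is genuinely open, and for the difference-quotient arguments inherited from Proposition~\ref{pushforw3}. The other point requiring care is checking that $\cD^r(X,\cdot)$ intertwines fibre complexification with complexification of the section space, i.e.\ that no completeness or nuclearity subtleties intervene in $\cD^r(X,E)_\C\cong\cD^r(X,X\times E_\C)$; this is handled via the explicit seminorm description in {\bf \ref{dstpomeg}}--{\bf \ref{defineVs}} and Proposition~\ref{expldestopII}, which is why that machinery was developed.
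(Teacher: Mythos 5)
Your overall strategy --- complexify the fibre, check the hypotheses of Proposition~\ref{propcxana} for the resulting bundle map, and identify $\cD^r(X,F_\C)$ with $\cD^r(X,F)_\C$ --- is the same as the paper's, and those parts of your argument are essentially fine. The genuine gap is in the very first step: you produce your extension $\tilde f_\C$ by invoking a complexification $P_\C$ of the real analytic manifold $P$ ``obtained by complexifying charts'', together with a complex analytic extension of $\tilde f$ to an open neighbourhood of all of $\wt U$ in $P_\C\times E_\C$. Neither is available here. The manifold $P$ is modelled on an arbitrary locally convex space $Z$ (in the intended application of Section~\ref{secmapgps} it is an arbitrary real analytic Lie group), and the existence of a complexification of such a manifold is nowhere established in the paper and is not a routine consequence of complexifying charts: the complex analytic extensions of the transition functions exist only locally and need not satisfy the cocycle condition on common domains, and gluing them into a Hausdorff complex manifold is, already in finite dimensions, the nontrivial Whitney--Bruhat theorem; no such result is at hand in this generality. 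Moreover, even granting $P_\C$, real analyticity of $\tilde f$ only yields \emph{local} extensions (Definition~{\bf \ref{defnran}}, read in charts); a single-valued $\tilde f_\C$ on a whole neighbourhood of $\wt U$ would require patching these local extensions by uniqueness of analytic continuation, a step you do not carry out.

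The paper's proof is engineered precisely to avoid both points: $P$ is never complexified. For each $a\in\im(\gamma)$ one takes a chart $\phi_a\colon W_a\to P_a$ of~$P$, extends $\theta_a(w,x)=\tilde f(\phi_a(w),x)$ complex analytically to a neighbourhood $H_a\times (R_a+iS_a)$, and then lets the $Z$-variable run only through real points $\phi_a^{-1}(p)$, $p\in P_a$, so that the extension is only ever used in the $E_\C$-direction over the real base; the fibrewise extensions attached to different $a,b$ are glued into one map $\check f\colon \wt V\to F_\C$ on an open set $\wt V\sub P\times E_\C$ (base real, fibre complexified) by an identity-theorem argument on the overlaps $(R_a\cap R_b)+i(S_a\cap S_b)$, where they agree because they agree on the real part $R_a\cap R_b$. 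This $\check f$ is merely smooth and fibrewise complex analytic, with $d_2^n$ of class $C^r$ --- which is all that Proposition~\ref{propcxana} requires --- and composing with $\gamma\times\id_{E_\C}$ gives the map $h$ to which that proposition is applied, with $Q=V\cap(X\times E)$. If you replace your appeal to $P_\C$ and a global $\tilde f_\C$ by such a local-chart construction along $\im(\gamma)$ with an explicit patching argument (and arrange the fibres of your $Q_\C$ to be $\R$-balanced, as Proposition~\ref{propcxana} requires of its domain), your argument becomes the paper's proof.
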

\begin{proof}
As~$P$ is a real analytic manifold,
for every $a\in P$
we find a diffeomorphism
$\phi_a\!: W_a\to P_a$ of real analytic manifolds
from an open zero-neighbourhood~$W_a$ in~$Z$
onto an open neighbourhood~$P_a$ of~$a$ in~$P$
such that~$\phi_a(0)=a$.
Assume $a\in \im(\gamma)$ now.
After shrinking~$P_a$ and~$W_a$, we may assume
that $P_a\times B_a\sub \wt{U}$
for some open $0$-neighbourhood $B_a$ in~$E$,
since~$\wt{U}$ is an open neighbourhood of $\im(\gamma)\times \{0\}$,
Then $\theta_a\!:W_a\times B_a\to F$,
$\theta_a(w,x):=
\tilde{f}(\phi_a(w),x)$
is a real analytic mapping and hence extends to a
complex analytic mapping $\tilde{\theta}_a\!:A_a\to F_\C$,
defined on an open neighbourhood~$A_a$ of
$W_a\times B_a$ in $Z_\C\times E_\C$.
Shrinking~$W_a$ if necessary, we may assume
that~$A_a$ contains a $0$-neighbourhood
of the form $H_a\times (R_a+i\, S_a)$
for an open neighbourhood~$H_a$ of~$W_a$ in~$Z_\C$,
an open, $\R$-balanced zero-neighbourhood~$R_a\sub B_a$ in~$E$,
and an open, symmetric, convex
zero-neighbourhood $S_a$
in~$E$.
If $a$, $b\in \im(\gamma)$, then for every $p\in P_a\cap P_b$,
the prescriptions
$x\mto \tilde{\theta}_a(\phi_a^{-1}(p),x)$
and $x\mto \tilde{\theta}_{b}(\phi_{b}^{-1}(p),x)$
define complex analytic mappings
\[
(R_a+iS_a)\cap
(R_b+iS_b)=(R_a\cap R_b)+i(S_a\cap S_b)
\to F_\C
\]
which coincide
on $R_a\cap R_b$ (where they coincide with
$\tilde{f}(p, \sbull)|_{R_a\cap R_b}$),
and which therefore coincide. We abbreviate
\[
\wt{V}:=\bigcup_{a\in \smim(\gamma)} P_a\times (R_a+iS_a),
\]
which is an open neighbourhood of $\im(\gamma)\times\{0\}$
in $P\times E_\C$.
By the preceding,
\[
\check{f}\!: \wt{V} \to F_\C,\;\;\;\;
\check{f}(p,x):=\tilde{\theta}_a(\phi_a^{-1}(p),x)\;\;\;
\mbox{if $(p,x)\in P_a\times(R_a+iS_a)$, where $a\in \im(\gamma)$}\]
is a well-defined smooth mapping
such that $\check{f}(p,\sbull)\!: \bigcup_{a\in \smim(\gamma)\;
s.t.\; p\in P_a}(R_a+iS_a) \to F_\C$
is complex analytic for each $p\in \bigcup_{a\in\smim(\gamma)}P_a$.
Set $V:=(\gamma\times \id_{E_\C})^{-1}(\wt{V})\sub X\times E_\C$,
and $Q:=V\cap (X\times E)$.
Clearly
$h:=\check{f}\circ (\gamma\times \id_{E_\C})|_V^{\wt{V}}$,
$V\to F_\C$
is partially $C^\infty$ in the second argument.
For any $(x,y)\in V$,
there are open neighbourhoods
$\wt{M}\sub P$ of $\gamma(x)$
and $N\sub E_\C$ of~$y$ such that $\wt{M}\times N\sub \wt{V}$;
set $M:=\gamma^{-1}(\wt{M})$.
Then $M\times N\sub V$, and
$d_2^n(h|_{M\times N})=d_2^n(\check{f}|_{\wt{M}\times N})
\circ (\gamma|_M^{\wt{M}}\times \id_{T^nN})\!:
M\times T^n N\to E_\C$ is of class~$C^r$ for each $n\in \N_0$.
Thus $d_2^nh$ is of class $C^r$ for each $n\in \N_0$.
Furthermore, clearly
$h(x,\sbull)$ is complex analytic for
each $x\in X$. Finally, $h(x,0)=f(x,0)=0$ for
all $x\in X$,
noting that~$h$ extends $f|_Q$
by construction.
By Proposition~\ref{propcxana},
the mapping
$h_*\!: \cD^r(X, V)\to \cD^r(X,F_\C)=\cD^r(X,F)_\C$
is complex analytic.
Thus $f_*|_{\cD^r(X,Q)}=h_*|_{\cD^r(X,Q)}$
is real analytic.
\end{proof}
\section{Spaces of sections as topological modules}\label{sec7}
Spaces of sections in vector bundles
can be considered as modules over various function
algebras.
In this section, we investigate continuity
properties of the module operations.\vspace{3 mm}\\
The following algebras are of interest.
\begin{prop}\label{topALG}
Suppose that $A$ is an associative locally convex topological algebra.
If $r\in \N\cup\{\infty\}$,
let $X$ be a $($not necessarily finite-dimensional$)$
$C^r$-manifold. If $r=0$, let~$X$ be any Hausdorff
topological space. Then $C^r(X,A)$ is a
locally convex topological algebra
with respect to pointwise operations.
Furthermore,
if~$X$ is a $\sigma$-compact finite-dimensional
manifold, then $\cD^r(X,A)$
is a locally convex topological algebra.
\end{prop}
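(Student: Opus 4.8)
The plan is to deduce both statements from the pushforward results already established, by realizing the pointwise multiplication on $C^r(X,A)$ as a pushforward along a suitable bundle map. First I would consider the trivial bundles $E_1 := X \times (A \times A)$ and $E_2 := X \times A$ over $X$, and the fibre-preserving map $f\colon E_1 \to E_2$, $f(x,(a,b)) := (x, ab)$, induced by the multiplication $\mu\colon A\times A \to A$. Since $\mu$ is continuous and bilinear, it is smooth, and hence $f$ is partially $C^\infty$ in the vector variable; moreover each iterated fibre derivative $\delta^j f$ is again (essentially) a composition of $\mu$ with linear maps, hence of class $C^r$ for every $j$. Thus the hypotheses of Proposition~\ref{pushforw1} (for $C^r(X,A)$, via Corollary~\ref{functorial1} applied to $\mu$) and of Theorem~\ref{pushforw3} (for $\cD^r(X,A)$, noting $f(x,0)=0$) are satisfied, and we conclude that the induced pushforwards are smooth, in particular continuous.

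The second step is bookkeeping with the canonical isomorphisms identifying products of section spaces with sections of direct sums. Concretely, under the topological isomorphism $C^r(X,A) \times C^r(X,A) \cong C^r(X, A\times A)$ (and its $\cD^r$-analogue from Section~\ref{sec3}), the bilinear multiplication map $C^r(X,A) \times C^r(X,A) \to C^r(X,A)$, $(\gamma,\eta) \mapsto \gamma\cdot\eta$, is exactly the pushforward $\mu_*$ (resp.\ $\cD^r(X,\mu)$), so its continuity is immediate from Step~1. Associativity of the pointwise product is inherited pointwise from $A$, and the locally convex vector topologies on $C^r(X,A)$ and $\cD^r(X,A)$ are the ones fixed in Definition~\ref{defntopfs}; so once multiplication is continuous, these are topological algebras by definition.

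I expect the only genuine subtlety to be the $\cD^r$ case: one must check that the multiplication $\cD^r(X,A)\times\cD^r(X,A)\to\cD^r(X,A)$ really is covered by Theorem~\ref{pushforw3}, i.e.\ that $f$ restricted to a neighbourhood of the zero-section (here all of $E_1$, which is already $\R$-balanced fibrewise) satisfies (a)--(c), and that the identification $\cD^r(X,E_1)\cong \cD^r(X,A)\times\cD^r(X,A)$ is a topological isomorphism onto the correct subspace — both of which are supplied by the results of Section~\ref{sec3} and the discussion preceding Theorem~\ref{pushforw3}. By contrast the $C^r$ case is essentially immediate from Corollary~\ref{functorial1} together with the identification of $C^r(X,A)^2$ with $C^r(X,A\times A)$. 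No partition-of-unity argument is needed here; everything reduces to the already-proved smoothness of pushforwards along the smooth (indeed continuous bilinear) map $\mu$.
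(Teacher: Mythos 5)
Your proposal is correct and follows essentially the same route as the paper: identify $C^r(X,A)\times C^r(X,A)$ with $C^r(X,A\times A)$, observe that pointwise multiplication is the pushforward along the smooth (continuous bilinear) map $\mu$, and invoke Corollary~\ref{functorial1} for $C^r(X,A)$ and Theorem~\ref{pushforw3} for $\cD^r(X,A)$. The paper's proof is exactly this, differing only in that it phrases the $C^r$ case directly via $C^r(X,\mu)$ rather than through the trivial-bundle formalism you spell out.
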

\begin{proof}
(See also \cite{GOO}, Proposition~2.1
and Remark 2.2).
Identifying $C^r(X,A)\times C^r(X,A)$
with $C^r(X, A\times A)$,
the algebra multiplication on $C^r(X,A)$
is given by
\[
C^r(X,\mu)\!: C^r(X,A\times A)\to C^r(X,A),\;\;\;
\gamma\mto \mu\circ \gamma,
\]
where the algebra multiplication $\mu\!:
A\times A$ is a smooth map. By Corollary~\ref{functorial1},
$C^r(X,\mu)$ is smooth and thus continuous.
The proof for $\cD^r(X,A)$ follows the same lines,
using \cite{Glo}, Corollary~4.16
(or Theorem~\ref{pushforw3} above)
instead of Corollary~\ref{functorial1}.
\end{proof}
\begin{thm}\label{modopcont}
Let $\pi\!: E\to X$ be a $C^r$-vector bundle,
whose fibre is a locally convex $\K$-vector
space~$F$.
Then
$C^r(X,E)$ is a topological $C^r(X,\K)$-module,
and, provided $X$ is finite-dimensional and $\sigma$-compact,
$\cD^r(X,E)$ is a topological $\cD^r(X,\K)$-module.
\end{thm}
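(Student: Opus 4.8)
The plan is to recognize both module multiplications as pushforwards of a single bundle map, namely fibrewise scalar multiplication, and then invoke Theorem~\ref{pushforw2} and Theorem~\ref{pushforw3}. Write $X\times\K$ for the trivial line bundle over~$X$. By {\bf \ref{sumsvble}} and the numbered item following it, $C^r(X,\K)\times C^r(X,E)$ is canonically isomorphic, as a topological vector space, to $C^r(X,(X\times\K)\oplus E)$, and likewise $\cD^r(X,\K)\times\cD^r(X,E)\cong\cD^r(X,(X\times\K)\oplus E)$ when~$X$ is finite-dimensional and $\sigma$-compact. Under this identification, the module multiplication $(h,\sigma)\mapsto h\cdot\sigma$ is precisely the pushforward $C^r(X,m)$ (resp., $\cD^r(X,m)$), where $m\!:(X\times\K)\oplus E\to E$ is the bundle map determined on fibres by $m(x,\lambda,v):=\lambda\,v\in E_x$. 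Since the module axioms hold trivially at the pointwise level, and $C^r(X,\K)$, $\cD^r(X,\K)$ are topological algebras by Proposition~\ref{topALG}, it suffices to show that these pushforwards are continuous.

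First I would check the hypotheses of Theorem~\ref{pushforw2} for~$m$, with $E_1:=(X\times\K)\oplus E$, $E_2:=E$, and $k:=\infty$. Evidently $m$ is a bundle map and $m(0_x)=0_x$. Given a local trivialization $\phi$ of~$E$ over an open set~$X_\phi$, let $\psi$ be the canonically associated local trivialization of $(X\times\K)\oplus E$ over~$X_\phi$ (cf.\ {\bf \ref{sumsvble}}). Then $m_{\phi,\psi}(x,\lambda,w)=\lambda\,w$ for $(x,\lambda,w)\in X_\phi\times\K\times F$; as this is the composition of the projection $X_\phi\times(\K\times F)\to\K\times F$ with the continuous bilinear scalar multiplication $\K\times F\to F$, it is smooth, and in particular partially $C^\infty$ in the vector variable $(\lambda,w)$. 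Hence $m$ is partially $C^\infty$ in the vector variable. Moreover, in these coordinates each iterated differential $d_2^j m_{\phi,\psi}$ is a fixed polynomial map in the fibre variables, independent of the base point, hence smooth; consequently the bundle maps $\delta^j m$ are of class~$C^r$ for every $j\in\N_0$. Theorem~\ref{pushforw2} now gives that $C^r(X,m)$ is smooth, in particular continuous, which proves that $C^r(X,E)$ is a topological $C^r(X,\K)$-module.

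For the $\cD^r$-statement I would apply Theorem~\ref{pushforw3} to the same bundle map~$m$: it is defined on all of $E_1=(X\times\K)\oplus E$, which is an $\R$-balanced open neighbourhood of its own zero-section, and satisfies $m(0_x)=0_x$, while hypotheses (b) and (c) have just been verified. Thus $\cD^r(X,m)\!:\cD^r(X,(X\times\K)\oplus E)\to\cD^r(X,E)$ is of class~$C^k$ for every~$k$, hence smooth and continuous; transporting along the topological isomorphism $\cD^r(X,\K)\times\cD^r(X,E)\cong\cD^r(X,(X\times\K)\oplus E)$ yields continuity of the $\cD^r(X,\K)$-module multiplication, and the proof is complete.

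The argument is essentially a bookkeeping exercise once the heavy pushforward theorems are available; the only step that merits a moment's attention is the verification that the $\delta^j m$ are of class~$C^r$, but this is immediate since in local trivializations $\delta^j m$ is a fixed smooth (indeed polynomial) function of the fibre variables with no dependence on the base, pre- and post-composed with the $C^r$ structure maps of the two bundles.
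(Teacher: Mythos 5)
Your argument is essentially the paper's own proof: you identify the module multiplication with the pushforward of the fibrewise scalar-multiplication bundle map $\mu\colon (X\times\K)\oplus E\to E$ under the isomorphism $C^r(X,\K)\times C^r(X,E)\isom C^r(X,(X\times\K)\oplus E)$ (resp.\ its $\cD^r$ analogue) and then invoke Theorem~\ref{pushforw2} and Theorem~\ref{pushforw3}. The only cosmetic difference is that the paper applies Theorem~\ref{pushforw3} with $k=0$, whereas you verify the hypotheses for all $j$ and obtain smoothness, which of course also yields the required continuity.
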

\begin{proof}
The function space
$\cD^r(X,\K)$ can be identified with
$\cD^r(X,X \times \K)$,
the space of $\cD^r$-sections
in the trivial bundle
$\pr_1\!: X\times \K\to X$
with fibre~$\K$ (cf.\ Lemma~\ref{anyatlas}).
Thus $\cD^r(X,\K)\times \cD^r(X,E)\isom \cD^r(X, (X\times \K)\oplus
E)$, and using this identification,
the multiplication map $\cD^r(X,\K)\times \cD^r(X,E)\to
\cD^r(X,E)$ has the form
\[
\cD^r(X,\mu)\!: \cD^r(X, (X\times \K)\oplus E)\to \cD^r(X,E),\]
where $\mu\!: (X\times \K)\oplus E \to E$
is defined $\mu((x,z),v):=zv\in E_x$ (scalar multiplication)
for all $x\in X$, $z\in \K$, and $v\in E_x$.
Given any local trivialization $\psi\!:\pi^{-1}(X_\psi)\to X_\psi\times
F$ of~$E$,
using the global trivialization $\phi:=\id\!:X\times \K\to
X\times \K$
we have $\mu_{\psi, \phi\oplus \psi}(x,z,v)=
zv\in F$, for all $(x,z,v)\in X_\psi\times\K\times F$,
showing that $\mu_{\psi, \phi \oplus \psi}\!:
X_\psi\times\K\times F\to F$ is a mapping of class~$C^r$.
Thus $\mu$ is a $C^r$-bundle map.
By Theorem~\ref{pushforw3} (applied with $k=0$),
$\cD^r(X,\mu)$ is continuous.
The assertion concerning $C^r(X,E)$ can be proved analogously,
using Theorem~\ref{pushforw2}
instead of Theorem~\ref{pushforw3}.
\end{proof}
\begin{rem}
If $A$ is a locally convex topological algebra
and $X$ a $C^r$-manifold,
we define a locally trivial bundle of locally convex topological
$A$-modules
as a $C^r$-vector bundle
$\pi\!: E\to X$
whose typical fibre~$F$
is a locally convex topological $A$-module,
and equipped with an atlas $\cA$ of local trivializations
such that $\im(g_{\phi,\psi})$
consists of topological $A$-module
automorphisms of~$F$, for all $\phi,\psi\in\cA$.
In this case,
we find along the lines of the preceding proof
that
$C^r(X,E)$ is a topological $C^r(X,A)$-module,
and, provided the base~$X$ is finite-dimensional
and $\sigma$-compact, $\cD^r(X,E)$ is
a topological $\cD^r(X,A)$-module (under pointwise
operations).
\end{rem}
Recall that a bilinear mapping
$\beta\!: E\times F\to G$ between locally convex
spaces is {\em hypocontinuous\/}
if and only if $\beta|_{A\times F}\!:
A\times F\to G$ and $\beta|_{E\times B}\!:
E\times B\to G$ are continuous mappings,
for any bounded subsets $A\sub E$
and $B\sub F$. Every hypocontinuous
bilinear map is separately continuous
and sequentially continuous;
it need not be continuous.
It is easy to see that if $\beta\!:E\times F\to G$
is a bilinear map
such that
$\beta|_{A\times F}$ and $\beta|_{E\times B}$
are continuous at~$(0,0)$
for all bounded subsets $A\sub E$, $B\sub F$ containing the origin, then
$\beta$ is hypocontinuous.\vspace{3 mm}\\
Let $E\to X$ be a $C^r$-vector bundle
over a $\sigma$-compact, finite-dimensional
base~$X$, having the locally convex $\K$-vector
space $F$ as its typical fibre.
Then
pointwise multiplication makes $\cD^r(X,E)$
a $C^r(X,\K)$-module. The following can be said:
\begin{prop}\label{onlyifcomp}
The module multiplication
\[
\mu\!: C^r(X,\K)\times \cD^r(X,E)\to \cD^r(X,E)
\]
is hypocontinuous.
If $F\not=\{0\}$, then $\mu$ is continuous
if and only if~$X$ is compact.
\end{prop}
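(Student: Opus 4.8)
The plan is to prove hypocontinuity first, then the compactness dichotomy. For hypocontinuity, I would use the criterion recalled just before the statement: it suffices to check that $\mu|_{B\times \cD^r(X,E)}$ and $\mu|_{C^r(X,\K)\times B'}$ are continuous at $(0,0)$ for bounded sets $B\sub C^r(X,\K)$ and $B'\sub \cD^r(X,E)$ containing the origin. For the first, fix such a $B$; since $\cD^r(X,E)=\dl\,C^{\,r}_{K_n}(X,E)$ (Remark~\ref{indu}), it is enough to show $\mu$ restricted to $B\times C^{\,r}_K(X,E)$ is continuous at the origin for each compact $K$, and here $C^{\,r}_K(X,E)$ carries a topology described by the seminorms $\|\cdot\|_{n,q,k}$ via Lemma~\ref{expldestop}. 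On such a space the relevant computation is entirely local: working in a trivialization $\psi_n$ one reduces to the classical Leibniz-rule estimate for $\|(\varphi\circ\wt\kappa_n^{-1})\cdot\gamma\|_{\wb{V_n},q,k}$ in terms of sup-norms of the derivatives of $\varphi=f|_{\wt U_n}$ (which are bounded as $f$ ranges over $B$, restricted to a fixed relatively compact chart domain) and $\|\gamma\|_{n,q,k}$. This gives the desired continuity at $(0,0)$. For the second restriction, fix a bounded $B'\sub \cD^r(X,E)$; by the countable strict direct limit, $B'$ is contained in and bounded in some $C^{\,r}_K(X,E)$, and then the same Leibniz estimate (now with the roles reversed: the section part is bounded, the scalar function part is the variable) yields continuity at $(0,0)$. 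Hence $\mu$ is hypocontinuous.

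For the equivalence, assume $F\ne\{0\}$. If $X$ is compact, then $\cD^r(X,E)=C^r(X,E)$ and $\cD^r(X,\K)=C^r(X,\K)=C^r(X)$, and continuity of $\mu$ follows from Theorem~\ref{modopcont} (the $C^r(X)$-module structure on $C^r(X,E)$ is topological), or directly from Theorem~\ref{pushforw2} applied to the scalar-multiplication bundle map $\mu\colon(X\times\K)\oplus E\to E$. So the content is the converse: if $X$ is not compact, $\mu$ is discontinuous. I would argue by contradiction. Assume $\mu$ is continuous at $(0,0)$. Since $X$ is $\sigma$-compact, finite-dimensional and non-compact, choose the exhaustion $(U_n)$, charts $\kappa_n$ and trivializations $\psi_n$ as in {\bf\ref{defnUnVn}}, and pick points $x_n\in U_n$ going to infinity (no convergent subsequence). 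Continuity at $(0,0)$ would furnish, for the target zero-neighbourhood $\cV(p,\ell,e)$ with say $p_n\equiv q$ a fixed seminorm with $q\not\equiv 0$ on $F$, $\ell_n\equiv 0$, $\ve_n:=1$, a zero-neighbourhood of the form $\Omega\times\cV(q,k,\delta)$ mapping into it, where $\Omega$ is a $C^r(X)$-zero-neighbourhood. Now the key point: $\Omega$, being a zero-neighbourhood in $C^r(X)$ with its compact-open-type topology, is controlled by finitely many seminorms involving finitely many $T^j$ of compact subsets of $X$; in particular there is a compact $L\sub X$ and a bound $\varepsilon_0>0$ so that every $\varphi\in C^r(X)$ with $\sup_{T^jL}|\cdot|\le\varepsilon_0$ (for the relevant finite set of $j$) lies in $\Omega$. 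Since $L$ is compact and $x_n\to\infty$, infinitely many $x_n$ avoid $L$.

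The heart of the argument is then a scaling/bump-function construction exploiting this: for each large $n$, pick a $C^r$ bump function $\varphi_n$ supported near $x_n$ (hence outside $L$) with $\varphi_n\equiv t_n$ on a neighbourhood of $x_n$, where $t_n\to\infty$ — this is allowed because on $X\take L$ there is no constraint from $\Omega$, so $\varphi_n\in\Omega$ regardless of how large $t_n$ is — together with a section $\sigma_n\in C^{\,r}_{\{x_n\}\text{-nbhd}}(X,E)$ that is small in every $\|\cdot\|_{m,q,k}$ (so $\sigma_n\in\cV(q,k,\delta)$) yet with $\|\sigma_n\|_{n,q,0}$ bounded below by a fixed positive constant, realized by taking a fixed nonzero $v\in F$ with $q(v)>0$ in the fibre and a bump of height $\to 0$ but localized so finely near $x_n$ that all lower seminorms... — more carefully, since the constraint $\cV(q,k,\delta)$ only asks $\|\sigma\|_{m,q_m,k_m}<\delta_m$ for each $m$, and $\|\sigma_n\|_{m,\cdot,\cdot}=0$ for $m\ne n$ when $\sigma_n$ is supported inside $U_n$, one only needs $\|\sigma_n\|_{n,q_n,k_n}<\delta_n$, which is compatible with $\|\sigma_n\|_{n,q,0}$ being any fixed value below $\delta_n$ — so take $\|\sigma_n\|_{n,q,0}=\delta_n/2$, uniformly bounded below in norm after normalizing, while $\varphi_n\sigma_n$ has $\|\varphi_n\sigma_n\|_{n,q,0}=t_n\cdot\delta_n/2\to\infty$. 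This contradicts $\mu(\varphi_n,\sigma_n)=\varphi_n\sigma_n\in\cV(q,k,e)$, i.e.\ $\|\varphi_n\sigma_n\|_{n,q,0}<1$. Hence $\mu$ is not continuous.

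\textbf{Main obstacle.} The delicate step is the last one: pinning down exactly which finitely many seminorms (and which compact sets, and which orders of derivatives) control a basic zero-neighbourhood $\Omega$ of $C^r(X)$, and verifying that the bump functions $\varphi_n$ supported away from the resulting compact set $L$ genuinely lie in $\Omega$ no matter how large their height $t_n$. Once that is clean, the contradiction is immediate, but one must be careful that in the definition of the topology on $C^r(X)$ via the embedding into $\prod_n C(T^nX,\K)_c$ (Definition~\ref{defntopfs}), a basic zero-neighbourhood really is determined by finitely many compact subsets of finitely many tangent bundles $T^jX$ and that these have compact projection to $X$ — this is exactly the kind of statement made explicit in Remark~\ref{trivialI}, and I would lean on that bookkeeping. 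The first half (hypocontinuity) is routine given Lemma~\ref{expldestop} and the Leibniz estimates already used in the proof of Proposition~\ref{expldestopII}, so I do not expect difficulty there.
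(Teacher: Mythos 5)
Your proposal is correct and takes essentially the same route as the paper: hypocontinuity via the explicit basis $\cV(q,k,e)$ together with chart-wise Leibniz estimates (the paper treats the bounded-section case a bit more slickly, cutting off with $h\in C^r(X,\R)$ equal to $1$ on $K$ and citing Theorem~\ref{modopcont} and Corollary~\ref{contmlt} instead of re-doing the estimate), and discontinuity for non-compact $X$ by exploiting that a zero-neighbourhood of $C^r(X,\K)$ only constrains functions over a compact set $L$, then multiplying a fixed small section located off $L$ by arbitrarily large functions supported off $L$ (the paper uses one bump $h$ and scalars $t\to\infty$ rather than your sequence of bumps, but the mechanism is identical). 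Two small repairs are needed: the reduction of continuity at $(0,0)$ on $B\times\cD^r(X,E)$ to the steps $C^{\,r}_K(X,E)$ requires justification (e.g.\ note that $\bigcap_{f\in B}m_f^{-1}(W)$ is absolutely convex and meets each $C^{\,r}_K(X,E)$ in a zero-neighbourhood, or simply observe that your Leibniz bounds are uniform in $K$ and hence furnish the global neighbourhood $\cV(q,k,\delta)$ directly, as in the paper), and the claim that $\|\sigma_n\|_{m,\cdot,\cdot}=0$ for $m\neq n$ when $\sigma_n$ is supported in $U_n$ is false since the chart domains overlap --- harmless, because only finitely many $m$ are active and you may scale $\sigma_n$ to satisfy all of those constraints while still letting $t_n$ make the product leave the target neighbourhood.
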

\begin{proof}
Let $B\sub \cD^r(X,E)$
be a bounded subset.
Then $B\sub C^{\,r}_K(X,E)$
for some compact
subset $K\sub X$ (cf.\ \cite{Sae}, Chapter~II,
assertion~6.5).
Choose a function $h\in C^r(X,\R)$
such that $h|_K=1$ and
$L:=\Supp(h)$ is compact.
Then we have, using the continuous
inclusion maps $i\!: B\to C^{\,r}_L(X,E)$
and $j\!: C^{\,r}_L(X,E)\to \cD^r(X,E)$,
\[
\mu|_{C^r(X,E)\times B} = j\circ \nu\circ (m_h \times i),\]
where the module multiplication
$\nu\!: C^{\,r}_L(X,\K)\times C^r(X,E)\to C^{\,r}_L(X,E)$
is continuous by Theorem~\ref{modopcont},
and so is multiplication
$m_h\!: C^r(X,E)\to C^{\,r}_L(X,E)$,
$\gamma\mto h\cdot \gamma$
(Corollary~\ref{contmlt}).
Thus
$\mu|_{C^r(X,\K)\times B}$ is continuous.
To establish the hypocontinuity of~$\mu$,
it only remains to show
that
$\mu|_{A\times \cD^r(X,E)}$
is continuous at $(0,0)$,
for every bounded subset $A\sub C^r(X,\K)$
containing the origin.
We shall make use of the constructions and notation
described in {\bf \ref{hierloos}}\,--\,{\bf \ref{defineVs}}.
Suppose that a zero-neighbourhood
$\cV(q,k,e)$ in $\cD^r(X,E)$
is given, where $q=(q_n)\in \Gamma^\N$, $k=(k_n)\in |r]^\N$,
and $e=(\ve_n)\in (\R^+)^\N$.
As $A$ is bounded,
so is $H_n:=\{f\circ \wt{\kappa}_n^{-1}\!: f\in A\}\sub
C^r(\wt{V}_n,\K)$,
for each $n\in \N$.
Thus $\sup_{f\in H_n}\|f\|_{\wb{V_n},q_n,k_n}<\infty$.
Using the Leibniz Rule,
we easily find $\delta_n>0$ such that
$\|f\cdot \gamma\|_{\wb{V_n},q_n,k_n}<\ve_n$
for all $f\in H_n$
and all $\gamma\in C^r(\wt{U}_n,F)$
such that $\|\gamma\|_{\wb{V_n},q_n,k_n}<\delta_n$.
Then $\mu(A\times \cV(k,q,(\delta_n)))\sub \cV(k,q,e)$.
We deduce that $\mu|_{A\times \cD^r(X,E)}$
is continuous at $(0,0)$.

If $X$ is compact, then $\cD^r(X,E)=C^r(X,E)$,
and we are in the situation of Theorem~\ref{modopcont}.
Thus, to prove the final
assertion, assume that $X$ is non-compact
and $F\not=\{0\}$.
We let $W:=\cV(q,e,k)$ be a zero-neighbourhood
in $\cD^r(X,E)$ as described in Definition~\ref{defineVs},
where $q\in \Gamma^\N$
is chosen such that $q_n\not = 0$ for all
$n\in \N$, $e\in (\R^+)^\N$,
and $k=(k_n)_{n\in \N}$ with
$k_n:=0$ for all $n\in \N$.
If $\mu$ was continuous, we could find
zero-neighbourhoods $P$ in $C^r(X,\K)$ and
$Q$ in $\cD^r(X,E)$ such that $\mu(P\times Q)\sub W$.
In view of the definition of the topology on
$C^r(X,\K)$,
we can find a compact set $K\sub X$ such
that $\{f\in C^r(X,\K)\!: f|_K=0\,\}\sub P$.
As $X$ is non-compact, we find $n\in \N$
such that $\emptyset\not=U_n\sub X\,\take\, K$,
where $U_n$ is as in {\bf \ref{defnUnVn}}.
Pick $x_0\in U_n$. Making use of the local trivialization $\psi_n$
(given by {\bf \ref{defnUnVn}}),
is easy to construct $\sigma\in \cD^r(X,E)$
such that $s:=q_n(\sigma_{\psi_n}(x_0))\not=0$;
after replacing~$\sigma$ by a small non-zero multiple,
we may assume that $\sigma\in Q$.
We also easily find $h\in C^r(X,\K)$ such that
$\Supp(h)\sub U_n$
and $h_n(x_0)=1$.
Then $th\in P$ for all $t\in \R$
and thus $th\sigma\in W$.
Here $q_n((th\sigma)_{\psi_n}(x_0))=|t|s$
which can be made arbitrarily large.
Therefore $th\sigma\not\in W$ for sufficiently large $|t|$,
contradiction.
\end{proof}
Let $E\to X$ be a smooth vector bundle
over a $\sigma$-compact, finite-dimensional
manifold~$X$ now, whose fibre is a finite-dimensional
$\K$-vector space~$F$.
The space of {\em distribution sections\/}
of $E$ is defined as the
strong dual $\cD'(X,E):=\cD^\infty(X,\Hom(E,\Omega_1(X)))'_b$,
where $\Omega_1(X)$ denotes the bundle
of real (resp., complex if $\K=\C$)
$1$-densities on~$X$. It is well-known that
$\cD'(X,E)$ is reflexive and thus barrelled.
We obtain a $C^\infty(X,\K)$-module structure
on $\cD'(X,E)$ via $f.u:=m_f'(u)\!: \sigma\mto u(f.\sigma)$.
In the same way, we
make $\cD'(X,E)$ a $\cD(X,\K)$-module.
Let us consider the simplest case first:
multiplication of smooth functions and
distributions on~$\R$.
As usual, given $\phi\in \cD(\R)$ and
$u\in \cD'(\R)$,
we write $\langle u,\phi\rangle:=u(\phi)$.
\begin{prop}\label{cheat}
The mapping
$\mu\!:C^\infty(\R)\times \cD'(\R)\to \cD'(\R)$,
$(f,u)\mto f.u:=u(f\sbull)$
and the corresponding map
$\nu\!: \cD(\R)\times \cD'(\R)\to \cD'(\R)$
are discontinuous.
\end{prop}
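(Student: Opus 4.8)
The plan is to argue by contradiction and exhibit, against a fixed test function $\phi_0$, a sequence of pairs $(f_N,u_N)$ that lies inside an \emph{arbitrary} prescribed product of zero-neighbourhoods while $\mu(f_N,u_N)(\phi_0)$ escapes to infinity; since $\mu$ and $\nu$ are bilinear, it suffices to contradict continuity at $(0,0)$. First I would record the shape of basic zero-neighbourhoods. In $C^\infty(\R)$ every zero-neighbourhood $V$ contains a set $\{f:\max_{j\le k}\sup_{x\in K}|f^{(j)}(x)|<\ve\}$ for some compact $K$, some $k\in\N_0$ and $\ve>0$, hence contains every $f\in\cD_{[-1,1]}(\R)$ with $\max_{j\le k}\|f^{(j)}\|_\infty<\ve$; in $\cD(\R)$ the trace $V\cap\cD_{[-1,1]}(\R)$ is a zero-neighbourhood of the Fr\'echet space $\cD_{[-1,1]}(\R)$ and so contains a set of exactly that last form. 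Thus both statements reduce to one construction. In the strong dual $\cD'(\R)$ every zero-neighbourhood $W$ contains the polar $B^\circ=\{u:|u(\phi)|\le1\ \forall\phi\in B\}$ of a bounded set $B\sub\cD(\R)$; being bounded, $B$ lies in some $\cD_{K'}(\R)$, so $c_m:=\sup_{\phi\in B}\|\phi^{(m)}\|_\infty<\infty$ for every $m$.

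Next I would fix $g\in\cD(\R)\setminus\{0\}$ with $\Supp(g)\sub[-1,1]$ and $\phi_0\in\cD(\R)$ with $\phi_0(0)=1$, and take as target neighbourhood $W_0:=\{v\in\cD'(\R):|v(\phi_0)|<1\}$. Assume zero-neighbourhoods $V,W$ satisfy $\mu(V\times W)\sub W_0$, and fix $k,\ve,B$ as above. A nonzero element of $\cD(\R)$ is never a polynomial, so $g^{(k+1)}\not\equiv0$; pick $x_*$ with $g^{(k+1)}(x_*)\ne0$ and set $C:=\max_{j\le k}\|g^{(j)}\|_\infty>0$. For $N\ge1$ I would take
\[
f_N:=\frac{\ve}{2CN^{k}}\,g(N\sbull),\qquad a_N:=\frac{x_*}{N},\qquad u_N(\phi):=\frac{\phi^{(k+1)}(a_N)}{1+c_{k+1}},
\]
so that $u_N\in\cD'(\R)$ (it is $\pm(1+c_{k+1})^{-1}$ times the $(k+1)$-st distributional derivative of the Dirac mass at $a_N$). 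Then $\Supp(f_N)\sub[-1,1]$ and $\|f_N^{(j)}\|_\infty=\tfrac{\ve}{2C}N^{j-k}\|g^{(j)}\|_\infty\le\ve/2<\ve$ for $j\le k$, so $f_N\in V$; and $|u_N(\phi)|=|\phi^{(k+1)}(a_N)|/(1+c_{k+1})<1$ for $\phi\in B$, so $u_N\in B^\circ\sub W$. The same $f_N,u_N,\phi_0$ will handle $\nu$: the only change is that $V$ is now a zero-neighbourhood of $\cD(\R)$, still containing a $C^k$-ball in $\cD_{[-1,1]}(\R)$.

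The final, quantitative step is a Leibniz expansion:
\[
\mu(f_N,u_N)(\phi_0)=u_N(f_N\phi_0)=\frac{1}{1+c_{k+1}}\sum_{j=0}^{k+1}\binom{k+1}{j}f_N^{(j)}(a_N)\,\phi_0^{(k+1-j)}(a_N).
\]
For $j\le k$, $f_N^{(j)}(a_N)=\tfrac{\ve}{2C}N^{j-k}g^{(j)}(x_*)$ stays bounded and $\phi_0^{(k+1-j)}(a_N)$ stays bounded since $a_N\to0$; but the $j=k+1$ term equals $f_N^{(k+1)}(a_N)\phi_0(a_N)=\tfrac{\ve}{2C}N\,g^{(k+1)}(x_*)\,\phi_0(x_*/N)$, whose modulus grows like a positive constant times $N$ because $\phi_0(x_*/N)\to\phi_0(0)=1$. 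Hence $|\mu(f_N,u_N)(\phi_0)|\to\infty$, so $\mu(f_N,u_N)\notin W_0$ for large $N$, contradicting $\mu(V\times W)\sub W_0$. Therefore $\mu$, and by the identical argument $\nu$, is discontinuous at $(0,0)$, hence discontinuous.

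\textbf{Main obstacle.} I expect the one real difficulty to be arranging that the two ``wild'' ingredients coexist inside the prescribed neighbourhoods: the oscillating bump must keep its first $k$ derivatives of size $O(\ve)$ — which forces the $N^{-k}$ normalisation — yet still produce a $(k+1)$-st derivative of order $N$ at a point $a_N\to0$ at which $\phi_0\ne0$, while the derivative-of-a-point-mass functional must be neutralised by the finite constant $c_{k+1}$, whose very existence is where one uses that a bounded subset of $\cD(\R)$ is supported in a single compact set and bounded in every Schwartz seminorm. Everything else is routine estimation.
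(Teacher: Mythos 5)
Your argument is correct, but it takes a genuinely different route from the paper. The paper first reduces $\mu$ to $\nu$ (the $\cD(\R)$-topology being finer than the induced one) and then argues by contradiction with soft functional analysis: from $\nu(V\times U)\sub\{h\}^0$ for a cutoff $h\equiv 1$ near $0$ it deduces $hU\sub V^0$, invokes reflexivity of $\cD(\R)$ and Mackey's theorem to conclude that $hU$ is bounded in $\cD'(\R)$, then uses the theorem that a bounded set of distributions supported in a fixed compact set has uniformly bounded order (Treves, Thm.\ 34.3), and finally exploits that $U$ is absorbing to place $r\delta_0^{(k+1)}\in hU$, a distribution of order $k+1$ --- contradiction. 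You instead build an explicit counterexample at $(0,0)$: scaled bumps $f_N$ that are $O(\ve)$ in the $C^k$-norm controlling the given neighbourhood but whose $(k{+}1)$st derivative at $a_N=x_*/N$ grows like $N$, paired with the normalized functionals $\phi\mto\phi^{(k+1)}(a_N)/(1+c_{k+1})$ lying in the polar $B^\circ\sub W$, and a Leibniz expansion isolating the single unbounded term. The quantifier order is handled correctly (first $V,W$, then $k,\ve,B,c_{k+1}$, then the sequence), the neighbourhood-basis descriptions you use are exactly those the paper itself establishes or cites elsewhere, and the only nontrivial input is that bounded subsets of $\cD(\R)$ sit boundedly in a single $C^\infty_{K'}(\R)$ --- the same Schaefer-type fact the paper uses in Proposition 7.4. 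What each approach buys: the paper's proof is shorter modulo the cited theorems and exhibits the conceptual ``unbounded order'' obstruction; yours is elementary and self-contained, avoids Mackey, reflexivity and the uniform-order theorem, and treats $\mu$ and $\nu$ simultaneously without the reduction step, while making the common underlying mechanism explicit: a zero-neighbourhood in the function factor only controls $k$ derivatives, whereas the dual neighbourhood still admits point-derivatives of order $k+1$.
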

\begin{proof}
The topology on $\cD(\R)$ being finer than the
one induced by $C^\infty(\R)$,
it suffices to show that $\nu$ is discontinuous.
To see this, let us assume on the contrary that~$\nu$
is continuous, and derive a contradiction.
We choose $h\in \cD(\R)$ which is constantly~$1$
on some $0$-neighbourhood.
Then the polar $W:=\{h\}^0:=\{u\in \cD'(\R)\!: \; |\langle u,
h\rangle|\leq 1\}$
is a zero-neighbourhood in $\cD'(\R)$.
Thus, as we assume that $\nu$ is continuous,
there are zero-neighbourhoods
$V\sub \cD(\R)$ and $U\sub \cD'(\R)$
such that $\phi.u\in W$ for all
$\phi\in V$, $u\in U$.
Thus $|\langle V.U,h\rangle|=|\langle hU,V\rangle|\sub [0,1]$
and therefore
\begin{equation}\label{ppolar}
hU\sub V^0.
\end{equation}
Since $\cD(\R)$ is a reflexive
locally convex space and $V$ is a $0$-neighbourhood
in $\cD^r(\R)$ and thus absorbing,
we deduce from (\ref{ppolar})
that $hU$ is a weakly bounded subset of $\cD'(\R)$
and thus bounded by Mackey's Theorem (\cite{RFA}, Theorem~3.18).
Since, furthermore,
$\Supp(u)\sub \Supp(h)$ for all $u\in hU$,
the set $hU$ consists of distributions
whose orders are uniformly
bounded: there is $k\in \N_0$
such that every $u\in hU$
is continuous
on $\cD(\R)$, equipped with the topology
induced by $\cD^k(\R)$ (cf.\ \cite{Tre},
p.\,359, Theorem~34.3).
Now, as $U$ is a $0$-neighbourhood,
we find $0\not = r\in \R$
such that $r\delta_0^{(k+1)}\in U$,
where $\delta_0^{(k+1)}$ denotes the $(k+1)\,$st
derivative of the unit point-mass at the origin.
As $h\delta_0^{(k+1)}=\delta_0^{(k+1)}$,
we deduce that $r\delta_0^{(k+1)}\in hU$.
But this is a distribution of order $k+1$,
contradiction.
\end{proof}
In the general situation described before
Proposition~\ref{cheat}, we have:
\begin{prop}
The module multiplications
\[
\mu\!: C^\infty(X,\K)\times \cD'(X,E)\to \cD'(X,E)\]
and
\[
\nu\!:\cD^\infty(X,\K)\times \cD'(X,E)\to \cD'(X,E)\]
are hypocontinuous.
If $\dim(F)>0$ and $\dim(X)>0$,
then neither $\mu$ nor $\nu$ is continuous.
\end{prop}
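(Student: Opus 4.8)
The plan is to get hypocontinuity from a short duality argument resting on Proposition~\ref{onlyifcomp}, and to get discontinuity by localising to a chart and imitating the proof of Proposition~\ref{cheat}. Throughout, abbreviate $G:=\cD^\infty(X,\Hom(E,\Omega_1(X)))$, so that $\cD'(X,E)=G'_b$; since $\Hom(E,\Omega_1(X))$ is a smooth vector bundle over the $\sigma$-compact, finite-dimensional base~$X$ with the \emph{finite-dimensional} fibre $\Hom(F,\K)$, Proposition~\ref{onlyifcomp} applies to it: the pointwise multiplication $C^\infty(X,\K)\times G\to G$ is hypocontinuous. I also use that~$G$ is a barrelled and complete $($LF$)$-space, so that bounded subsets of $G'_b=\cD'(X,E)$ are equicontinuous on~$G$, while the polar $($in~$G)$ of a $0$-neighbourhood of $\cD'(X,E)$ is bounded in~$G$.

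\textbf{Hypocontinuity of $\mu$ and $\nu$.} By the criterion for hypocontinuity recalled before Proposition~\ref{onlyifcomp}, it suffices to show that $\mu|_{A\times\cD'(X,E)}$ and $\mu|_{C^\infty(X,\K)\times B}$ are continuous at $(0,0)$, for all bounded sets $A\sub C^\infty(X,\K)$ and $B\sub\cD'(X,E)$ containing the origin. Fix a $0$-neighbourhood $M^0$ of $\cD'(X,E)$ with $M\sub G$ bounded. For the first map: $A.M$ is bounded in~$G$ $($hypocontinuous bilinear maps send products of bounded sets to bounded sets$)$, so $Q:=(A.M)^0$ is a $0$-neighbourhood of $\cD'(X,E)$, and $\mu(A\times Q)\sub M^0$, since for $f\in A$, $u\in Q$, $\sigma\in M$ one has $|(f.u)(\sigma)|=|u(f.\sigma)|\le1$ as $f.\sigma\in A.M$. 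For the second map: $B$ is equicontinuous, say $B\sub N^0$ for a $0$-neighbourhood $N\sub G$; hypocontinuity of the multiplication on~$G$ yields a $0$-neighbourhood $P\sub C^\infty(X,\K)$ with $P.M\sub N$, and then $\mu(P\times B)\sub M^0$, since $|(f.u)(\sigma)|=|u(f.\sigma)|\le1$ for $f\in P$, $u\in B$, $\sigma\in M$ $($as $f.\sigma\in N$ and $u\in N^0)$. Hence~$\mu$ is hypocontinuous. Finally $\nu=\mu\circ(\iota\times\id_{\cD'(X,E)})$ for the continuous inclusion $\iota\!:\cD^\infty(X,\K)\to C^\infty(X,\K)$, which sends bounded sets to bounded sets, so hypocontinuity of~$\nu$ follows from that of~$\mu$.

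\textbf{Discontinuity when $\dim F>0$ and $\dim X>0$.} Since $\mu$ continuous would force $\nu=\mu\circ(\iota\times\id)$ continuous, it is enough to show~$\nu$ discontinuous; I argue by contradiction, imitating the proof of Proposition~\ref{cheat}. Assume~$\nu$ continuous. Fix $x_0\in X$, a chart $\kappa$ of~$X$ around~$x_0$ with $\kappa(x_0)=0$, and a trivialization $\psi$ of~$E$ over the chart domain~$\Omega$; trivialising $\Omega_1(X)|_\Omega$ by the density $|dx^1\wedge\cdots\wedge dx^d|$ $(d:=\dim X\ge1)$, this identifies $\Hom(E,\Omega_1(X))|_\Omega$ with $\Omega\times\Hom(F,\K)$. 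Choose $\lambda_0\in\Hom(F,\K)\setminus\{0\}$ $($possible since $\dim F\ge1)$ and $\xi_0\in F$ with $\lambda_0(\xi_0)=1$, and choose $\tau_0\in G$ supported in~$\Omega$ which near~$x_0$ equals $\lambda_0\tensor|dx^1\wedge\cdots\wedge dx^d|$ in the chosen trivialization. Put $W:=\{\tau_0\}^0$, a $0$-neighbourhood of $\cD'(X,E)$. Continuity of~$\nu$ gives $0$-neighbourhoods $V\sub\cD^\infty(X,\K)$ and $U\sub\cD'(X,E)$ with $\nu(V\times U)\sub W$, i.e.\ $|u(\phi.\tau_0)|\le1$ for all $\phi\in V$, $u\in U$. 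For fixed~$\tau_0$ the map $\phi\mapsto\phi.\tau_0$ is continuous linear $\cD^\infty(X,\K)\to G$ $($partial map of the hypocontinuous multiplication on~$G$; or Theorem~\ref{modopcont}$)$, so for each $u\in U$, $\tau_0.u\!:\phi\mapsto u(\phi.\tau_0)$ is a continuous linear functional on $\cD^\infty(X,\K)$ supported in $\Supp\tau_0$, and $\{\tau_0.u:u\in U\}\sub V^0$; as~$V$ is absorbing and $\cD^\infty(X,\K)$ is reflexive, $V^0$ is bounded $($Mackey's theorem, cf.\ the proof of Proposition~\ref{cheat}$)$, so $\{\tau_0.u:u\in U\}$ is a bounded family of $\K$-valued distributions supported in the fixed compact set $\Supp\tau_0$, and hence has uniformly bounded order: there is $k\in\N_0$ with every $\tau_0.u$ $(u\in U)$ continuous for the $\cD^k$-topology $($cf.\ \cite{Tre}, p.\,359, Theorem~34.3$)$.

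To finish: $U$ is absorbing, so $r\,v_0\in U$ for some $r\ne0$, where $v_0:=(\partial_1^{\,k+1}\delta_{x_0})\tensor\xi_0\in\cD'(X,E)$ $($the $(k+1)$-st partial derivative, in the first chart coordinate, of the point mass at~$x_0$, tensored with~$\xi_0)$; using $\tau_0\equiv\lambda_0\tensor|dx^1\wedge\cdots\wedge dx^d|$ near~$x_0$ and $\lambda_0(\xi_0)=1$, a direct computation in the chart gives $\langle\tau_0.v_0,\phi\rangle=\pm\,\partial_1^{\,k+1}\phi(x_0)$ for all $\phi\in\cD^\infty(X,\K)$, so $\tau_0.v_0$, hence $\tau_0.(r\,v_0)=r\,(\tau_0.v_0)$, has order $k+1$; this contradicts $\tau_0.(r\,v_0)\in\{\tau_0.u:u\in U\}$ together with the previous paragraph, and proves~$\nu$ discontinuous, hence also~$\mu$. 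The only substantive analytic input, exactly as in Proposition~\ref{cheat}, is that a bounded family of distributions supported in a fixed compact set has bounded order; the main obstacle is organising the chart-and-trivialization bookkeeping and setting up the notion of order for a distribution \emph{section} and its behaviour under the $\tau_0$-pairing consistently, which is otherwise routine.
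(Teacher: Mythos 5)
Your proof is correct, and it differs from the paper's in an interesting way. For the hypocontinuity half, the paper simply observes that $\mu$ and $\nu$ are separately continuous and that all spaces involved are reflexive, hence barrelled, and then invokes the general theorem that separately continuous bilinear maps on barrelled spaces are hypocontinuous (\cite{Tre}, Theorem~41.2). You instead prove hypocontinuity by hand, dualizing Proposition~\ref{onlyifcomp} applied to the bundle $\Hom(E,\Omega_1(X))$: you use that polars of bounded sets form a $0$-basis of $\cD'(X,E)=G'_b$, that bounded sets in the dual of the barrelled (LF)-space $G$ are equicontinuous, and that the hypocontinuous multiplication on $G$ maps bounded$\,\times\,$bounded sets to bounded sets; this yields the two required equicontinuity estimates $\mu(A\times Q)\sub M^0$ and $\mu(P\times B)\sub M^0$ directly, and the statement for $\nu$ then follows by composing with the continuous inclusion $\cD^\infty(X,\K)\to C^\infty(X,\K)$. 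Your route is slightly longer but more self-contained (it leans on results already in the paper plus standard duality facts rather than on Treves' theorem) and it even gives the estimates uniformly over the bounded set. For the discontinuity half, the paper explicitly omits the details, saying only that one argues by a variant of the proof of Proposition~\ref{cheat}; what you write is precisely such a variant, carried out in full: localizing in a chart, trivializing $E$ and the density bundle, letting $\tau_0$ (equal to $\lambda_0\tensor|dx^1\wedge\cdots\wedge dx^d|$ near $x_0$) play the role of the cutoff $h$, using reflexivity and Mackey's theorem to get a bounded family of scalar distributions of uniformly bounded order supported in $\Supp(\tau_0)$, and contradicting this with $(\partial_1^{k+1}\delta_{x_0})\tensor\xi_0$, whose pairing with $\tau_0$ reproduces $\pm\partial_1^{k+1}\delta_{x_0}$ of order $k+1$; the hypotheses $\dim(X)>0$ and $\dim(F)>0$ enter exactly where you use them. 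I see no gap; the only inputs beyond the paper are routine duality facts about (LF)-spaces and their strong duals.
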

\begin{proof}
It is not hard to see that
the bilinear mappings $\mu$ and $\nu$ are separately continuous.
As all of the locally convex spaces involved
are reflexive and thus barrelled,
the hypocontinuity follows from
\cite{Tre}, Theorem~41.2.

If $\dim(F)>0$ and $\dim(X)>0$,
then the discontinuity of
$\mu$ and $\nu$
can be shown by a
variant of the arguments used
to prove
Proposition~\ref{cheat}. We omit the details.
\end{proof}
In particular, we deduce
from Proposition~\ref{topALG},
Proposition~\ref{onlyifcomp}
and Proposition~\ref{cheat}:
\begin{cor}
The multiplication maps
\[
\begin{array}{ccccl}
C^\infty(\R) & \times & C^\infty(\R) & \to &
C^\infty(\R) \;\;\;\;\;\mbox{and}\\
\cD(\R) & \times & \cD(\R) & \to &
\;\,\cD(\R)
\end{array}
\]
are continuous.
In contrast, the
multiplication maps
\[
\begin{array}{ccccl}
C^\infty(\R) & \times & \cD(\R) & \to & \cD(\R),\\
C^\infty(\R) & \times & \cD'(\R) & \to & \cD'(\R)\;\;\;\;\;\;\;\;\;\mbox{and}\\
\cD(\R) & \times  & \cD'(\R) & \to & \cD'(\R)
\end{array}
\]
are hypocontinuous
and thus sequentially continuous,
but none of them is continuous.\Punkt
\end{cor}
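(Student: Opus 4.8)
The plan is to obtain all five assertions as specializations of results already established, taking the base to be $X=\R$ (which is $\sigma$-compact of dimension $1$) and, wherever a bundle is needed, the trivial line bundle $E=\R\times\K\to\R$, whose typical fibre $F=\K$ satisfies $F\neq\{0\}$ and $\dim F=1>0$. First I would treat the two continuity statements: applying Proposition~\ref{topALG} with $A=\K$, $r=\infty$, $X=\R$ shows that $C^\infty(\R)=C^\infty(\R,\K)$ and $\cD(\R)=\cD^\infty(\R,\K)$ are locally convex topological algebras under pointwise multiplication, which is exactly the continuity of the first two maps.

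For the map $C^\infty(\R)\times\cD(\R)\to\cD(\R)$, identify $\cD(\R)$ with $\cD^\infty(\R,\R\times\K)$ (Lemma~\ref{anyatlas}); then this map is the module multiplication $C^r(X,\K)\times\cD^r(X,E)\to\cD^r(X,E)$ of Proposition~\ref{onlyifcomp} with $r=\infty$, $X=\R$, $E=\R\times\K$. Since $\R$ is non-compact and $F=\K\neq\{0\}$, Proposition~\ref{onlyifcomp} delivers both hypocontinuity and the failure of continuity. For the two maps into $\cD'(\R)$, hypocontinuity is the content of the (unlabelled) proposition just before the corollary, applied with $X=\R$ and $E=\R\times\K$, and discontinuity is stated directly in Proposition~\ref{cheat} for both $C^\infty(\R)\times\cD'(\R)\to\cD'(\R)$ and $\cD(\R)\times\cD'(\R)\to\cD'(\R)$. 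The clause ``and thus sequentially continuous'' follows from the general fact, recorded just before Proposition~\ref{onlyifcomp}, that hypocontinuous bilinear maps are sequentially continuous.

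There is essentially no genuine obstacle; what needs care is purely bookkeeping: verifying that $C^\infty(\R)$, $\cD(\R)$ and $\cD'(\R)$ really are the instances of the abstract function-space and distribution-section constructions to which the cited propositions apply, and checking the numerical side conditions ($X$ non-compact, $\dim X>0$, $F\neq\{0\}$) for $X=\R$, $F=\K$. The only mildly delicate identification is $\cD^r(X,\K)\cong\cD^r(X,X\times\K)$, which is what turns pointwise multiplication into the pushforward / module map featuring in the hypotheses of the cited results.
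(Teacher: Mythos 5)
Your proposal is correct and follows essentially the same route as the paper, which likewise obtains the corollary by specializing Proposition~\ref{topALG}, Proposition~\ref{onlyifcomp}, Proposition~\ref{cheat} and the hypocontinuity proposition for distribution sections to $X=\R$ with the trivial line bundle. The side conditions you check ($\R$ non-compact, $F=\K\neq\{0\}$, $\dim X>0$) and the identification $\cD^r(X,\K)\cong\cD^r(X,X\times\K)$ are exactly the bookkeeping the paper's deduction relies on.
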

See also \cite[p.\,13]{Mic} for the second of these
assertions, \cite[p.\,423]{Tre} for the third and fourth.\vspace{2 mm}\\
The algebra
$\cD(\R)$ of $\K$-valued
test functions is in fact a rather nice topological algebra:
its subset $Q(\cD(\R))$ of elements
possessing a quasi-inverse is an open $0$-neighbourhood,
and quasi-inversion $q\!: Q(\cD(\R))\to Q(\cD(\R))$ is a
$\K$-analytic mapping. This entails
that
the associated unital algebra
$\cD(\R)_e=\K \, e+ \cD(\R)$
has an open group of units,
with $\K$-analytic inversion
(and so $(\cD(\R)_e)^\times$ is a
$\K$-analytic Lie group).
We refer to~\cite{GOO} for more information
(cf.\ also \cite[p.\,65]{Mic}).
\section{Existence of universal central extensions}\label{sec8}
Throughout this section, $X$ denotes
a $\sigma$-compact, finite-dimensional smooth
manifold.
By abuse of notation,
given $f\in C^\infty(X):=C^\infty(X,\R)$,
we shall write $df$ for
the mapping $(x\mto df(x,\sbull))\in C^\infty(X,T^*X)$.
\begin{la}\label{lemmm}
The linear map
$\cD(X)\to \cD(X,T^*X)$, $f\mto df$
is continuous.
\end{la}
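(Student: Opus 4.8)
The statement asserts continuity of the exterior derivative $d\colon \cD(X)\to\cD(X,T^*X)$, $f\mapsto df$. My plan is to reduce this to the local/trivialized picture and then invoke the explicit description of the direct-limit topology from Section~\ref{sec5}. Since both sides carry locally convex direct limit topologies over the system of compactly supported sections, and since $d$ is visibly support-non-increasing (i.e.\ $\Supp(df)\sub\Supp(f)$), the map restricts to linear maps $C^\infty_K(X)\to C^\infty_K(X,T^*X)$ for each compact $K\sub X$. By the universal property of the direct limit $\cD(X)=\dl\, C^\infty_K(X)$, it suffices to show each of these restrictions is continuous, and for that one only needs continuity of $d\colon C^\infty_{C_n}(X)\to C^\infty_{C_n}(X,T^*X)$ for the supports $C_n=\Supp(h_n)$ of a partition of unity subordinate to the cover $(U_n)$, exactly as in the proof of Proposition~\ref{expldestopII}.

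First I would fix the data from {\bf\ref{hierloos}}--{\bf\ref{defnUnVn}}: the locally finite cover $(\wt U_n)$ with charts $\wt\kappa_n\colon\wt U_n\to\wt V_n\sub\R^d$, and for the cotangent bundle the associated local trivializations $\psi_n$ coming from the chart frames $dx^1,\dots,dx^d$. In these coordinates, if $f\in C^\infty(X)$ and $g:=f\circ\wt\kappa_n^{-1}\in C^\infty(\wt V_n,\R)$, then the trivialized expression of $df$ over $\wt U_n$ is $(df)_{\psi_n}\circ\wt\kappa_n^{-1}=(\partial_1 g,\dots,\partial_d g)\colon\wt V_n\to\R^d$. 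Hence for the seminorms of {\bf\ref{unsch}}--{\bf\ref{dstpomeg}} one gets the pointwise estimate
\[
\|df\|_{n,|\cdot|,k}=\|(\partial_1 g,\dots,\partial_d g)\|_{\wb{V_n},\,|\cdot|,k}\ \le\ C\,\|g\|_{\wb{V_n},\,|\cdot|,k+1}\ =\ C\,\|f\|_{n,|\cdot|,k+1}
\]
for a suitable constant $C=C(d)$, using standard multi-index notation (a partial derivative of order $\le k$ of a component of $dg$ is a partial derivative of order $\le k+1$ of $g$). With this local estimate in hand, continuity of $d\colon C^\infty_{C_n}(X)\to C^\infty_{C_n}(X,T^*X)$ is immediate from Lemma~\ref{expldestop}, since the topology on $C^\infty_{C_n}(X,T^*X)$ is generated by the $\|\cdot\|_{m,q,k}$ and only finitely many indices $m$ are relevant (those with $U_m\cap C_n\neq\emptyset$), while the pullback seminorm is dominated by a seminorm $\|\cdot\|_{m,q,k+1}$ on $C^\infty_{C_n}(X)$.

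Finally I would assemble: given $\cV(q,k,e)$ a basic zero-neighbourhood of $\cD(X,T^*X)$ as in Definition~\ref{defineVs} and Proposition~\ref{expldestopII}, the above shows $d^{-1}(\cV(q,k,e))\supseteq\cV(q,k+1,e')$ for a suitable $e'$ (adjusting by the constants $C$ and, if one wants to be careful about the frame transition functions $g_{\phi,\psi}$ on overlaps, by the Leibniz rule exactly as invoked in the proof of Proposition~\ref{expldestopII}), which is again a zero-neighbourhood; this proves continuity of $d$ on $\cD(X)$ directly. Alternatively, and perhaps cleaner, one simply notes that $d$ is linear and support-preserving, applies the universal property of the direct limit, and reduces to the $C^\infty_{C_n}$-statement just established. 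The main (and only real) obstacle is purely bookkeeping: matching up the coordinate expression of $df$ with the bundle seminorms $\|\cdot\|_{n,q,k}$ built from the chart-induced trivializations of $T^*X$, and handling the transition functions $g_{\phi,\psi}$ on overlaps via the Leibniz rule; there is no conceptual difficulty, as $d$ raises the required derivative order by exactly one.
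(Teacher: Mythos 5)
Your argument is correct and follows essentially the same route as the paper: reduce via the direct-limit universal property (using that $f\mapsto df$ does not enlarge supports) and then verify continuity in chart-induced trivializations, where $d$ merely raises the derivative order by one. The paper phrases the local step as a reduction via Lemma~\ref{anyatlas} to $X$ open in $\R^d$ and calls it ``readily verified,'' which is exactly the seminorm estimate $\|df\|_{n,q,k}\le C\,\|f\|_{n,q,k+1}$ you spell out.
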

\begin{proof}
By the universal property
of direct limits
it is enough to show
that the linear mappings $C^\infty_K(X)\to
C^\infty_K(X,T^*X)$, $f\mto df$
are continuous for each compact subset~$K$ of~$X$.
This will hold if the corresponding map
$C^\infty(X)\to C^\infty(X,T^*X)$ is continuous.
Using Lemma~\ref{anyatlas},
we reduce to the case where $X$ is an open subset of~$\R^d$.
Now the desired continuity
is readily verified.
\end{proof}
\begin{prop}
The mapping $\cD(X)\times \cD(X)
\to \cD(X,T^*X)$,
$(f,g)\mto f \, dg$ is continuous.
\end{prop}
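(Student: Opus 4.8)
The plan is to avoid any direct seminorm estimate and instead exhibit $(f,g)\mapsto f\,dg$ as a composition of maps already shown to be continuous. The key observation is that, for $f,g\in\cD(X)$, the section $f\,dg$ equals $\mu(f,dg)$, where $dg\in\cD(X,T^*X)$ denotes the $C^\infty$-section $x\mapsto dg(x,\sbull)$ (which is compactly supported and lies in $\cD(X,T^*X)$ by Lemma~\ref{lemmm}), and $\mu$ is the module multiplication on the $\cD(X)$-module $\cD(X,T^*X)$.

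Concretely, I would write the map under consideration as the composition
\[
\cD(X)\times\cD(X)\;\xrightarrow{\;\id_{\cD(X)}\times d\;}\;\cD(X)\times\cD(X,T^*X)\;\xrightarrow{\;\mu\;}\;\cD(X,T^*X),
\]
where $d\colon\cD(X)\to\cD(X,T^*X)$, $g\mapsto dg$ is the continuous linear map of Lemma~\ref{lemmm}. The first arrow is continuous, its two components being $\pr_1$ and $d\circ\pr_2$. The second arrow is continuous because $\cD(X,T^*X)$ is a topological $\cD(X)$-module: this is Theorem~\ref{modopcont}, applied with $r=\infty$, $\K=\R$, and the cotangent bundle $E=T^*X\to X$, which is a $C^\infty$-vector bundle over the $\sigma$-compact, finite-dimensional manifold~$X$ (with fibre $\R^{\dim X}$). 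Composing the two, $(f,g)\mapsto\mu(f,dg)=f\,dg$ is continuous.

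There is essentially no obstacle here; the one point worth stressing is that it is crucial to invoke the $\cD(X)$-module structure on $\cD(X,T^*X)$ rather than merely the $C^\infty(X)$-module structure: by Proposition~\ref{onlyifcomp} the multiplication $C^\infty(X)\times\cD(X,T^*X)\to\cD(X,T^*X)$ is only hypocontinuous --- in fact discontinuous once $\dim X>0$, since $X$ is noncompact --- so it is the finer topology of $\cD(X)$ on the scalar factor that makes $\mu$ jointly continuous. The remaining details (that $dg$ really lies in $\cD(X,T^*X)$, with $\Supp(dg)\sub\Supp(g)$, and that the abuse of notation $df$ is compatible with the bundle-section viewpoint) are routine and were already arranged in the paragraph preceding Lemma~\ref{lemmm}.
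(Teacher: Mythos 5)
Your proof is correct and takes essentially the same route as the paper: the paper likewise factors $(f,g)\mapsto f\,dg$ through the continuous linear map $g\mapsto dg$ of Lemma~\ref{lemmm} and the continuous module multiplication on the topological $\cD(X)$-module $\cD(X,T^*X)$ (which the paper cites via Theorem~\ref{pushforw3}, the result underlying Theorem~\ref{modopcont}). Your added remark contrasting this with the merely hypocontinuous $C^\infty(X)$-module multiplication is a sensible sanity check but not needed for the argument.
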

\begin{proof}
In fact, the mapping $\cD(X)\to \cD(X,T^*X)$,
$g\mto dg$ is continuous by Lemma~\ref{lemmm},
and $\cD(X,T^*X)$ is a topological
$\cD(X)$-module by Theorem~\ref{pushforw3}.
\end{proof}
The preceding proposition
entails
that the test function group
$\cD^\infty(X,G)$ (equipped with the
Lie group structure
defined in \cite{Glo}, cf.\ \cite{Alb},
which we recall in the next section)
has a universal central extension (in the sense of \cite{NCE})
in the category of smooth Lie groups modelled
on sequentially complete locally convex spaces,
for every $\sigma$-compact
finite-dimensional
smooth manifold~$X$ and semi-simple
finite-dimensional Lie group~$G$:
see \cite{CUR},
also~\cite{Mai}
for corresponding results on the Lie algebra level.
This intended application was one stimulus
for our investigations.
\section{Mapping groups on non-compact manifolds}\label{secmapgps}
If $G$ is a finite-dimensional
Lie group, it is well-known that
the group
$C^r(K,G)$ of $G$-valued $C^r$-functions
on a compact manifold~$K$ has a natural
analytic Lie group structure
modelled on the Lie algebra $C^r(K,L(G))$
(cf.\ \cite{Mil}).
For non-compact finite-dimensional manifolds~$X$,
one cannot expect to turn $C^r(X,G)$
into a Lie group modelled on $C^r(X,L(G))$.
However, even for infinite-dimensional~$G$,
there
is a natural Lie group structure
on the ``test function group'' $\cD^r(X,G)$
of those $G$-valued $C^r$-functions
which are constantly~$1$ off some compact set;
it is modelled on $\cD^r(X,L(G))$ (see~\cite{Glo},
cf.\ \cite{Alb}).
In the special situation when
$G$ is a $\K$-analytic Baker-Campbell-Hausdorff
Lie group, it was shown in~\cite{Glo}
that $C^r(X,G)$ can be made a $\K$-analytic Lie
group having $\cD^r(X,G)$ as an open subgroup
(and thus modelled on $\cD^r(X,L(G))$).
Using the more powerful technical tools developed in
the present paper, we now show that
$C^r(X,G)$ can be given a smooth (resp., $\K$-analytic)
Lie group structure modelled on $\cD^r(X,L(G))$,
for any smooth (resp, $\K$-analytic) Lie group~$G$
(which need not be BCH).
We remark that $C^\infty(X,G)$ had already
been given a Lie group structure
in the sense of convenient differential
calculus (again modelled on compactly supported functions)
when~$G$ is a Lie group in that sense
(cf.\ \cite{KaM}, Theorem~42.21).
This does not subsume our results,
as Lie groups in the broad sense of convenient differential
calculus need not be Lie groups in the more
conventional sense used in the present paper;
{\em e.g.\/},
they need not have continuous
group operations. Furthermore,
we can consider arbitrary $r\in \N_0\cup\{\infty\}$,
not only the special case $r=\infty$.
\begin{thm}\label{dstruckonCr}
Let $X$ be any $\sigma$-compact,
finite-dimensional $C^r$-manifold,
and $G$ be any smooth $($resp., $\K$-analytic$)$
Lie group. Then there is a unique smooth
$($resp., $\K$-analytic$)$ Lie group structure
on $C^r(X,G)$ such that
\[
\cD^r(X,V)\to C^r(X,G),\;\;\;
\gamma\mto \phi^{-1}\circ \gamma
\]
is a diffeomorphism of smooth $($resp, $\K$-analytic$)$
manifolds onto an open submanifold of $C^r(X,G)$,
for some chart $\phi\!: W\to V$ from an open
identity neighbourhood~$W$ in $G$
onto a balanced, open zero-neighbourhood
$V\sub L(G)$, with $\phi(1)=0$.
We have $L(C^r(X,G))\isom \cD^r(X,L(G))$.
\end{thm}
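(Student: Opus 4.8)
The plan is to put a Lie group structure on $C^r(X,G)$ — which already carries an obvious abstract group structure under pointwise operations (here one uses that $C^r$-maps into~$G$ are stable under composition with the smooth, resp.\ $\K$-analytic, group operations of~$G$) — by transporting a chart around the identity and then invoking the standard criterion for building a Lie group from such local data (cf.\ \cite{Glo}, \cite{Nee}). Fix a chart $\phi\!: W\to V$ of~$G$ with $\phi(1)=0$ and $V$ a balanced open $0$-neighbourhood in~$L(G)$. Since $V$ is balanced, hence $\R$-balanced, Lemma~\ref{thingsopen} shows that $\cD^r(X,V)$ is an open $0$-neighbourhood in the locally convex space $\cD^r(X,L(G))$, so it is a manifold; I declare $\Phi\!:\cD^r(X,V)\to C^r(X,G)$, $\gamma\mto\phi^{-1}\circ\gamma$, together with its left translates $\lambda_g\circ\Phi$ for $g\in C^r(X,G)$, to be the charts of the sought manifold structure. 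As $\Phi$ takes values in the subgroup $\cD^r(X,G)$, the latter will automatically become an open subgroup, and the model space will be $\cD^r(X,L(G))$.

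For the local multiplication and inversion, choose balanced open $0$-neighbourhoods $V_1\sub V_0\sub V$ in~$L(G)$ so small that $\iota_\phi\!: V_0\to V$, $\iota_\phi(v):=\phi(\phi^{-1}(v)^{-1})$, and $\mu_\phi\!: V_1\times V_1\to V$, $\mu_\phi(v,w):=\phi(\phi^{-1}(v)\,\phi^{-1}(w))$, are well defined and smooth (resp.\ $\K$-analytic), with $\iota_\phi(0)=0$ and $\mu_\phi(0,0)=0$. Regard $\mu_\phi$ as a bundle map over~$X$ on the trivial (hence $\R$-balanced) bundle $X\times(V_1\times V_1)$: it is partially $C^\infty$ in the fibre variable, all $\delta^j$ are of class~$C^r$ (being independent of~$x$), and it takes the zero-section to the zero-section. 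Theorem~\ref{pushforw3} then shows that the pushforward $\cD^r(X,\mu_\phi)$ is smooth; under $\cD^r(X,L(G))^2\isom\cD^r(X,L(G)^2)$ it is exactly the map $\cD^r(X,V_1)\times\cD^r(X,V_1)\to\cD^r(X,V)$ describing group multiplication in the chart~$\Phi$, and $\cD^r(X,\iota_\phi)$ describes inversion. In the $\C$-analytic case $d_2\mu_\phi$ and $d_2\iota_\phi$ are complex linear in the fibre, so Proposition~\ref{propcxana} upgrades this to $\C$-analyticity; in the $\R$-analytic case Proposition~\ref{proprealana} (with $P$ a point, $\gamma$ constant, $\tilde f=\mu_\phi$, resp.\ $\iota_\phi$) yields $\R$-analyticity on a possibly smaller balanced neighbourhood of the zero-section, which suffices after shrinking $V_0$, $V_1$.

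The crux — and the step I expect to cost the most — is the conjugation condition: for each $g=\gamma\in C^r(X,G)$, conjugation $c_\gamma(\eta):=\gamma\eta\gamma^{-1}$ must be smooth (resp.\ $\K$-analytic) on some identity neighbourhood. First, $c_\gamma$ preserves $\cD^r(X,G)$ (since $\gamma(x)\,1\,\gamma(x)^{-1}=1$, supports are unchanged), and read in the chart~$\Phi$ it is the pushforward of the $x$-\emph{dependent} bundle map $F_\gamma(x,v):=\phi\big(\gamma(x)\,\phi^{-1}(v)\,\gamma(x)^{-1}\big)$, defined on $U_\gamma:=(\gamma\times\id_{L(G)})^{-1}(\Omega)$, where $\Omega:=\{(a,v)\in G\times L(G)\!: v\in V,\ a\phi^{-1}(v)a^{-1}\in W\}$ is open and contains $G\times\{0\}$; note $F_\gamma(x,0)=\phi(1)=0$. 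Writing $F_\gamma=\Psi\circ(\gamma\times\id_{L(G)})$ with $\Psi\!:\Omega\to L(G)$, $\Psi(a,v):=\phi(a\phi^{-1}(v)a^{-1})$, smooth (resp.\ $\K$-analytic), one sees that $F_\gamma$ is partially $C^\infty$ in~$v$ and each $\delta^jF_\gamma$ is of class~$C^r$, being the composition of the $C^r$-map $\gamma\times\id$ with the smooth iterated fibre-differentials of~$\Psi$. To obtain an $\R$-balanced domain I would replace $U_\gamma$ by $U_\gamma^{\mathrm{bal}}:=\{(x,v)\!: [-1,1]\,v\sub(U_\gamma)_x\}$ (the fibrewise balanced core), which contains the zero-section, is $\R$-balanced, and is open by a tube-lemma argument over the compact interval $[-1,1]$. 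Then Theorem~\ref{pushforw3} (resp.\ Propositions~\ref{propcxana}, \ref{proprealana} — the latter applied precisely with $P=G$, $\tilde f=\Psi$, and our~$\gamma$) shows that $\cD^r(X,F_\gamma)$, hence $c_\gamma$ in the chart, is smooth (resp.\ $\K$-analytic) on the identity neighbourhood corresponding to $\cD^r(X,U_\gamma^{\mathrm{bal}})$.

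With these ingredients the local-data criterion produces a unique smooth (resp.\ $\K$-analytic) Lie group structure on $C^r(X,G)$ making $\Phi$ a diffeomorphism onto an open submanifold; uniqueness is part of that criterion. Finally, $L(C^r(X,G))\isom\cD^r(X,L(G))$ as topological vector spaces since the model space at~$1$ is $\cD^r(X,L(G))$, and the Lie bracket is the pointwise one, because the differential formula $d^j\cD^r(X,f)=\cD^r(X,\delta^jf)$ from the proof of Theorem~\ref{pushforw3} identifies the quadratic part of $\cD^r(X,\mu_\phi)$ with the pushforward of the quadratic part of~$\mu_\phi$, i.e.\ with the pointwise bracket of~$L(G)$.
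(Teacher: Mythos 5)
Your proposal is correct and follows essentially the same route as the paper: the decisive step in both is to read conjugation by an arbitrary $\gamma\in C^r(X,G)$ in the chart $\Phi$ as the pushforward of the $x$-dependent bundle map $\Psi\circ(\gamma\times\id_{L(G)})$ with $\Psi(a,v)=\phi(a\,\phi^{-1}(v)\,a^{-1})$, handled by Theorem~\ref{pushforw3} (resp.\ Propositions~\ref{propcxana}, \ref{proprealana}), and then to invoke the local characterization of Lie groups to extend the structure from the open subgroup $\cD^r(X,G)$. The only deviations are cosmetic: you re-derive the Lie group structure on $\cD^r(X,G)$ via pushforwards of $\mu_\phi$ and $\iota_\phi$ where the paper simply cites~\cite{Glo}, and you obtain the required $\R$-balanced domain as the fibrewise balanced core of $U_\gamma$ (tube-lemma argument) instead of the paper's explicit union $\bigcup_x A_x\times V_x$ of box neighbourhoods, both of which work.
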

\begin{proof}
We recall from \cite{Glo} that there is
an (apparently unique)
smooth (resp., $\K$-analytic)
Lie group structure on $\cD^r(X,G)$ such that
$\Phi:=\cD^r(X,\phi^{-1})\!:
\cD^r(X,V)\to \cD^r(X,W)$,
$\gamma\mto \phi^{-1}\circ \gamma$
is a diffeomorphism of smooth $($resp, $\K$-analytic$)$
manifolds onto the open submanifold
$\cD^r(X,W)$ of $\cD^r(X,G)$,
for some chart $\phi\!: W\to V$
as described in the theorem.
Furthermore,
$L(\cD^r(X,G))\isom \cD^r(X,L(G))$.

Fix $\gamma\in C^r(X,G)$.
For every $x\in X$,
there is an open neighbourhood $P_x$
of $\gamma(x)$ in $G$ and
an open balanced $0$-neighbourhood $V_x\sub V$
in $L(G)$ such that $g \phi^{-1}(v) g^{-1}\sub W$
for all $g\in P_x$, $v\in V_x$.
We let $A_x\sub X$ be an open neighbourhood
of~$x$ such that $\gamma(A_x)\sub P_x$.
Then $U:=\bigcup_{x\in X}A_x\times V_x$
is an open $\R$-balanced neighbourhood
of $X\times \{0\}$ in $X\times L(G)$,
and $\wt{U}:=\bigcup_{x\in X}P_x\times V_x$
is an open subset of $G\times L(G)$
such that $(\gamma\times \id_{L(G)})(U)\sub \wt{U}$.
We define
\[
\wt{f}\!: \wt{U}\to L(G),\;\;\;\;
(g,v)\mto \phi(g\, \phi^{-1}(v)\,g^{-1})
\]
and let $f:= \wt{f}\circ (\gamma\times \id_{L(G)})|_U^{\wt{U}}:
U\to L(G)$. Then clearly $\wt{f}$ is a smooth
(resp., $\K$-analytic) mapping,
and $f$ is partially $C^\infty$ in the second argument,
with $d_2^jf$ of class $C^r$ for all $j\in \N_0$
(cf.\ proof of Proposition~\ref{proprealana}).
Therefore $f_*\!: \cD^r(X,U)\to \cD^r(X,L(G))$
is smooth (Theorem~\ref{pushforw3}).
In the case where~$G$ is a complex Lie group,
clearly
$f(x,\sbull)$ is complex analytic
and thus $d_2f(x,y,\sbull)$ complex linear,
whence $f_*$ is complex analytic
(Proposition~\ref{propcxana}).
The automorphism of groups
$J_\gamma\!:
\cD^r(X,G)\to \cD^r(X,G)$,
$\eta\mto \gamma\eta\gamma^{-1}$
satisfies
\[
\Phi^{-1}\circ J_\gamma|_S^{\cD^r(X,W)}
\circ \Phi|_{\cD^r(X,U)}^S=f_*\, ,\]
where $S:=\Phi(\cD^r(X,U))$.
Thus $J_\gamma$ is smooth (resp., complex analytic)
on the open identity
neighbourhhod~$S$
and therefore smooth (resp., complex
analytic when~$G$ is a complex Lie group).
Finally, if $G$ is a real analytic Lie group,
we deduce from Proposition~\ref{proprealana}
that $f_*|_{\cD^r(X,Q)}$ is real analytic for
some open, $\R$-balanced neighbourhood
$Q\sub U$ of the zero-section in $X\times L(G)$,
whence
$J_\gamma$ is real analytic on $\Phi(\cD^r(X,Q))$
and thus real analytic.

In either case,
the local characterization of
Lie groups (\cite{Glo}, Proposition~1.12)
shows that there is a unique smooth (resp.,
$\K$-analytic) Lie group structure
on $C^r(X,G)$ making $\cD^r(X,G)$ an open submanifold.
The remainder is then obvious.
\end{proof}
Note that,
by construction,
$\cD^r(X,G)$ is an open subgroup
of $C^r(X,G)$ and carries
the smooth (resp, $\K$-analytic) Lie group structure
described in~\cite{Glo}.\vspace{3 mm}\\
We may consider $C^r(X,\sbull)$ as a functor.
\begin{prop}\label{Cisfuncto}
Let $X$ be a $\sigma$-compact, finite-dimensional $C^r$-manifold,
$G_1$, $G_2$ be smooth $($resp., $\K$-analytic$)$
Lie groups, and $f\!: G_1\to G_2$ be a smooth $($resp.,
$\K$-analytic$)$ mapping such that $f(1)=1$.
Then
\[
C^r(X,f)\!: C^r(X,G_1)\to C^r(X,G_2),\;\;\;\;
\gamma \mto f\circ \gamma
\]
is a smooth $($resp., $\K$-analytic$)$ mapping
with respect to the manifold structures
on $C^r(X,G_1)$ and $C^r(X,G_2)$
described in Theorem~{\n \ref{dstruckonCr}\/}.
\end{prop}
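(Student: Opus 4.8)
The plan is to mimic the proof of Theorem~\ref{dstruckonCr}, with the conjugation automorphism $J_\gamma$ there replaced by the pushforward $C^r(X,f)$ here. Since smoothness (resp., $\K$-analyticity) is a local property and left translations in the Lie groups $C^r(X,G_1)$ and $C^r(X,G_2)$ (Theorem~\ref{dstruckonCr}) are diffeomorphisms, it suffices to show that $C^r(X,f)$ is smooth (resp., $\K$-analytic) on a neighbourhood of an arbitrary $\gamma\in C^r(X,G_1)$. Fixing charts $\phi_i\colon W_i\to V_i$ as in Theorem~\ref{dstruckonCr} for $G_i$ and writing $\Phi_i:=\cD^r(X,\phi_i^{-1})$, I would use as a parametrization of $C^r(X,G_1)$ about~$\gamma$ the map $\eta\mapsto\gamma\cdot(\phi_1^{-1}\circ\eta)$ on $\cD^r(X,V_1)$ (that is, $L_\gamma\circ\Phi_1$), and similarly $\zeta\mapsto(f\circ\gamma)\cdot(\phi_2^{-1}\circ\zeta)$ (that is, $L_{f\circ\gamma}\circ\Phi_2$) about $f\circ\gamma$.

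Next I would build the local representative. For each $x\in X$ the map $(g,v)\mapsto f(g)^{-1}f(g\,\phi_1^{-1}(v))$ is continuous near $(\gamma(x),0)$ with value $1\in W_2$ there, so there are an open neighbourhood $P_x$ of $\gamma(x)$ in~$G_1$ and an $\R$-balanced open zero-neighbourhood $V_x\sub V_1$ in $L(G_1)$ such that $f(g)^{-1}f(g\,\phi_1^{-1}(v))\in W_2$ for all $(g,v)\in P_x\times V_x$; choosing open $A_x\ni x$ with $\gamma(A_x)\sub P_x$, one obtains an open $\R$-balanced neighbourhood $U:=\bigcup_{x}A_x\times V_x$ of $X\times\{0\}$ in $X\times L(G_1)$ and $\wt U:=\bigcup_{x}P_x\times V_x\sub G_1\times L(G_1)$ with $(\gamma\times\id_{L(G_1)})(U)\sub\wt U$. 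Setting
\[
\wt F\colon\wt U\to L(G_2),\qquad\wt F(g,v):=\phi_2\bigl(f(g)^{-1}f(g\,\phi_1^{-1}(v))\bigr),
\]
and $F:=\wt F\circ(\gamma\times\id_{L(G_1)})|_U^{\wt U}$, the map $\wt F$ is smooth (resp., $\K$-analytic), being assembled from $f$, the group operations and the charts; just as in the proof of Proposition~\ref{proprealana} this yields that $F$ is partially $C^\infty$ in the second argument with $d_2^jF$ of class~$C^r$ for all $j$, and $F(x,0)=\phi_2(1)=0$. Theorem~\ref{pushforw3} then gives smoothness of $F_*\colon\cD^r(X,U)\to\cD^r(X,L(G_2))$; in the complex case $\wt F(g,\sbull)$, hence $F(x,\sbull)$, is complex analytic, so $F_*$ is complex analytic by Proposition~\ref{propcxana}; in the real analytic case Proposition~\ref{proprealana} (applied with $P=G_1$ and $\wt f=\wt F$) gives real analyticity of $F_*$ on $\cD^r(X,Q)$ for some open $\R$-balanced neighbourhood $Q\sub U$ of $X\times\{0\}$.

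Finally I would check that $F_*$ is exactly the local representative of $C^r(X,f)$ in the two parametrizations above: for $\eta\in\cD^r(X,U)$ and $x\in X$,
\[
\bigl((L_{f\circ\gamma}\circ\Phi_2)^{-1}\circ C^r(X,f)\circ(L_\gamma\circ\Phi_1)\bigr)(\eta)(x)=\phi_2\bigl(f(\gamma(x))^{-1}f(\gamma(x)\,\phi_1^{-1}(\eta(x)))\bigr)=F(x,\eta(x)).
\]
Since $\cD^r(X,U)$ (resp.\ $\cD^r(X,Q)$) is an open zero-neighbourhood in $\cD^r(X,L(G_1))$ by Lemma~\ref{thingsopen}, the map $L_\gamma\circ\Phi_1$ carries it onto an open neighbourhood of~$\gamma$, so $C^r(X,f)$ agrees near~$\gamma$ with a composite of the smooth (resp., $\K$-analytic) map $F_*$ with diffeomorphisms, hence is smooth (resp., $\K$-analytic) near~$\gamma$; as $\gamma$ was arbitrary the proof is complete. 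I expect the only point requiring genuine care to be the verification that $F$ satisfies the differentiability hypotheses of Theorem~\ref{pushforw3}, together with, in the real analytic case, the passage to the smaller neighbourhood~$Q$ furnished by Proposition~\ref{proprealana}; everything else runs parallel to the treatment of the conjugation map in the proof of Theorem~\ref{dstruckonCr}.
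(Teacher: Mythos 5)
Your proposal is correct and follows essentially the same route as the paper: the same local representative $h(x,v)=\phi_2\bigl(f(\gamma(x))^{-1}f(\gamma(x)\,\phi_1^{-1}(v))\bigr)$ on an $\R$-balanced neighbourhood $U$ of the zero-section, smoothness via Theorem~\ref{pushforw3}, the complex case via Proposition~\ref{propcxana}, the real analytic case via Proposition~\ref{proprealana} (with $P=G_1$), and the conclusion by conjugating with left translations and the charts $\Phi_i$.
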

\begin{proof}
For $j\in \{1,2\}$,
let $\phi_j\!: U_j\to V_j$ be a diffeomorphism
of smooth (resp., $\K$-analytic)
manifolds from an open zero-neighbourhood~$U_j$ in~$L(G_j)$
onto an open identity neighbourhood~$V_j$ in~$G_j$,
such that $\phi_j(0)=1$,
$\cD^r(X,V_j)$ is an open
identity neighbourhood in $\cD^r(X,G_j)$,
and $\Phi_j:=\cD^r(X,\phi_j)\!:
\cD^r(X,U_j)\to \cD^r(X,V_j)$
is a diffeomorphism of smooth (resp., $\K$-analytic)
manifolds. 
The mapping
\[
\tilde{g}\!: G_1\times U_1\to G_2,\;\;\;\;
\tilde{g}(a,u):=f(a)^{-1}f(a\cdot \phi_1(u))\]
is smooth (resp., $\K$-analytic),
and $\tilde{g}(a,0)=1$ for every $a\in G_1$. 
Suppose $\gamma\in C^r(X, G_1)$ is given.
Then, for every $x\in X$,
we find an open neighbourhood $P_x\sub G_1$ of $\gamma(x)$
and an open, balanced $0$-neighbourhood
$Q_x\sub U_1$ in $L(G_1)$
such that $\tilde{g}(P_x\times Q_x)\sub V_2$.
Let $B_x\sub X$ be an open neighbourhood
of~$x$ such that $\gamma(B_x)\sub P_x$.
Then $\wt{U}:=\bigcup_{x\in X} (P_x\times Q_x)\sub G_1\times U_1$
is an open neighbourhood of
$G_1\times \{0\}$ in $G_1\times L(G_1)$,
and $U:=\bigcup_{x\in X}(B_x\times Q_x)\sub X\times U_1$
is an open $\R$-balanced neighbourhood of
$X\times \{0\}$ in $X\times L(G_1)$
such that $(\gamma\times \id_{L(G_1)})(U)\sub \wt{U}$.
The mapping
$\tilde{h}:=\phi_2^{-1}\circ \tilde{g}|^{V_2}_{\wt{U}}$,
$\wt{U}\to U_2\sub L(G_2)$ is smooth (resp.,
$\K$-analytic).
We define
\begin{equation}\label{hpartsmooth}
h:= \tilde{h}\circ \, (\gamma\times \id_{L(G_1)})|_{U}^{\wt{U}},\;\;\;\;
U\to U_2.
\end{equation}
Thus 
$h(x,v):=\phi^{-1}_2[f(\gamma(x))^{-1}f(\gamma(x)\phi_1(v))]$
for all $(x,v)\in U$.\vspace{2 mm}\\
{\em Smooth or complex analytic case.\/}
The mappings $\tilde{h}$, $\gamma$, and $\id_{L(G_1)}$
being of class $C^\infty$, $C^r$, and $C^\infty$,
respectively, we easily deduce from Equation\,(\ref{hpartsmooth})
that~$h$ is partially $C^\infty$ in the second argument,
with $d_2^j$ of class~$C^r$,
for every $j\in \N_0$.
In the complex analytic case, furthermore
apparently $h(x,\sbull)$ is complex analytic
for each $x\in X$.
By Theorem~\ref{pushforw3} (resp., Proposition~\ref{propcxana}),
the mapping
$h_*\!: \cD^r(X, U)\to \cD^r(X, L(G_2))$ is
smooth (resp., complex analytic). We set $Q:=U$ in these cases.\vspace{2 mm}\\
In the {\em real analytic case\/},
$h_*|_{\cD^r(X,Q)}$ is real analytic for some
open $\R$-balanced
neighbourhood $Q\sub U$ of $X\times \{0\}$,
by Proposition~\ref{proprealana}.\vspace{2 mm}\\
In either case,
we let $\lambda_\gamma\!:C^r(X, G_1)\to C^{r}(X, G_1)$,
$\sigma\mto \gamma\cdot \sigma$  denote left translation
by $\gamma$ on $C^{r}(X, G_1)$
and $\lambda_{f\circ \gamma}$ denote left translation
by $f\circ \gamma$ on $C^{r}(X, G_2)$.
We abbreviate $W:=\Phi_1(\cD^r(X,Q))$.
Since
\[
\Phi^{-1}_2\circ
(\lambda_{f\circ \gamma}^{-1}\circ C^r(X, f)
\circ \lambda_\gamma)|_W^{\cD^r(X,V_2)}
\circ \Phi_1|_{\cD^r(X, Q)}^W
=h_*|_{\cD^r(X,Q)}
\]
is a smooth (resp., $\K$-analytic) mapping, it follows that
$\lambda_{f\circ \gamma}^{-1}\circ \, C^r(X, f)
\circ \, \lambda_\gamma$ is smooth (resp., $\K$-analytic)
on the identity neighbourhood $W$
of~$C^r(X, G_1)$.
Translations being diffeomorphisms,
this entails that $C^r(X, f)$
is smooth (resp., $\K$-analytic) on some neighbourhood of~$\gamma$.
\end{proof}
\begin{rem}
As a variant of the $C^0$-vector
bundles defined above,
we might allow $E$ to be a topological space
and the base~$X$ to be a $\sigma$-compact
locally compact space in Definition~\ref{defnbdle}
(instead of $C^0$-manifolds),
and replace the word ``$C^0$-diffeomorphism''
by ``homeomorphism.''
It is easy to see that
Theorem~\ref{pushforw2},
Theorem~\ref{pushforwK},
Theorem~\ref{pushforw3},
Proposition~\ref{propcxana}
and Proposition~\ref{proprealana}
remain valid for the variant of $C^0$-vector bundles
just described, by obvious adaptations
of the proofs.\footnote{Essentially,
the only difference in the proofs
is that we do not (and cannot)
use charts (like $\wt{\kappa}_n$ in {\bf \ref{defnUnVn}}).
Instead, we directly calculate
suprema over compact subsets of~$X$.
For example,
we define $\|\sigma\|_{n,q,0}$
in {\bf \ref{dstpomeg}}
directly as $\sup_{x\in \wb{V_n}}q(\sigma_{\psi_n}(x))$.
Since we are forced to use charts in the case of manifolds
discussed in the main body of the text,
but cannot use charts in the varied setting just
described,
we didn't find it practical to discuss both cases
simultaneosly.}
As a consequence,
we can give $C(X,G)=C^0(X,G)$ a smooth (resp.,
$\K$-analytic) Lie group structure
modelled on $\cD^0(X,L(G))$,
for every smooth (resp., $\K$-analytic)
Lie group~$G$ and $\sigma$-compact,
locally compact space~$X$ (which need not be a
topological manifold).
Proposition~\ref{Cisfuncto}
holds without changes for these groups. 
\end{rem}
{\em Acknowledgemants}.
This research was supported by Deutsche Forschungsgemeinschaft,
FOR 363/1-1.

{\footnotesize
{\bf Helge Gl\"{o}ckner}\\
Universit\"at Paderborn\\
Institut f\"ur Mathematik\\
Warburger Str.\ 100\\
33098 Paderborn, Germany\\
Email: glockner@math.upb.de}
\end{document}